\numberwithin{equation}{section}
\newtheorem{thm}[equation]{Theorem}
\newtheorem{prop}[equation]{Proposition}
\newtheorem{lemma}[equation]{Lemma}
\newtheorem{cor}[equation]{Corollary}
\newtheorem{con}[equation]{Conjecture}
\theoremstyle{definition}
\newtheorem{rem}[equation]{Remark}
\newtheorem{example}[equation]{Example}
\newtheorem{dfn}[equation]{Definition}
\newcommand{\cP}{\mathcal{P}}
\newcommand{\Br}{\mathop{\mathrm{Br}}}
\newcommand{\ind}{\mathop{\mathrm{ind}}}
\newcommand{\CH}{\mathop{\mathrm{CH}}\nolimits}
\newcommand{\Spin}{\operatorname{\mathrm{Spin}}}
\newcommand{\GL}{\operatorname{\mathrm{GL}}}
\newcommand{\Ch}{\mathop{\mathrm{Ch}}\nolimits}
\newcommand{\res}{\mathop{\mathrm{res}}\nolimits}
\newcommand{\Char}{\mathop{\mathrm{char}}\nolimits}
\newcommand{\Z}{\mathbb{Z}}
\newcommand{\Q}{\mathbb{Q}}
\newcommand{\cT}{\mathcal T}
\newcommand{\Prod}{\operatornamewithlimits{\textstyle\prod}}
\renewcommand{\phi}{\varphi}
\newcommand{\ekbar}{\mathbf{e}}
\newcommand{\ck}{\mathbf{c}}
\newcommand{\pk}{\mathbf{p}}
\newcommand{\lk}{\mathbf{l}}
\newcommand{\vk}{\mathbf{v}}
\DeclareMathAlphabet{\cat}{OT1}{cmss}{m}{sl}
\newcommand{\abs}[1]{\left|#1\right|}
\title
{Counter-examples to a conjecture of Karpenko for spin groups}
\keywords
{
Algebraic groups;
Spin groups;
generic torsors;
projective homogeneous varieties;
Chow rings;
Grothendieck rings.
{\em Mathematical Subject Classification (2010):}
20G15; 14C25; 16E20}
\author
[S.~Baek]
{Sanghoon Baek}
\address
[S. Baek]
{Department of Mathematical Sciences\\
	KAIST\\
	291 Daehak-ro, Yuseong-gu\\
	Daejeon 305-701\\
	Republic of Korea}
\email
{sanghoonbaek@kaist.ac.kr}
\urladdr{mathsci.kaist.ac.kr/~sbaek}
\author
[R.~Devyatov]
{Rostislav Devyatov}
\address
[R.~Devyatov]
{Department of Mathematical Sciences\\
	KAIST\\
	291 Daehak-ro, Yuseong-gu\\
	Daejeon 305-701\\
	Republic of Korea}
\email
{deviatov@mccme.ru}
\urladdr{mccme.ru/~deviatov}
\date
{\today}
\thanks
{The work of both authors was supported by Samsung Science and Technology Foundation under Project Number SSTF-BA1901-02.}
\begin{document}

\begin{abstract}
Consider the canonical morphism from the Chow ring of a smooth variety $X$ to the associated graded ring of the topological filtration on the Grothendieck ring of $X$. In general, this morphism is not injective. However, Nikita Karpenko conjectured that these two rings are isomorphic for a generically twisted flag variety $X$ of a semisimple group $G$. The conjecture was first disproved by Nobuaki Yagita for $G=\Spin(2n+1)$ with $n=8, 9$. Later, another counter-example to the conjecture was given by Karpenko and the first author for $n=10$. In this note, we provide an infinite family of counter-examples to Karpenko's conjecture for any $2$-power integer $n$ greater than $4$. This generalizes Yagita's counter-example and its modification due to Karpenko for $n=8$.
\end{abstract}

\maketitle

\tableofcontents


\section{Introduction}

For a smooth variety $X$ over a field $k$, let $\CH(X)$ and $K(X)$ denote the Chow and Grothendieck rings of $X$, respectively. Consider the associated graded ring $GK(X)$  of $K(X)$ with respect to the topological filtration, i.e.,
\begin{equation*}
GK(X)=\bigoplus_{i=0}^{\dim X}K(X)^{(i)}/K(X)^{(i+1)},
\end{equation*}
where $K(X)^{(i)}$ denotes the $i$-th term of the topological filtration of $K(X)$. 

The canonical morphism
\begin{equation}\label{eq:conjecture}
\phi: \CH(X)\to GK(X)	
\end{equation}
sending the class of a closed subvariety of $X$ in $\CH^{i}(X)$ to the class of its structure sheaf in $K(X)^{(i)}/K(X)^{(i+1)}$, is surjective but not injective in general. By
Riemann-Roch theorem, for all $i \ge 1$, the kernel of the $i$th homogeneous component 
\begin{equation*}
\phi^i : \CH^i(X) \to GK^i (X):= K(X)^{(i)}/K(X)^{(i+1)}
\end{equation*}
is annihilated by $(i-1)!$. Hence, the morphism $\phi$ becomes an isomorphism after tensoring with $\Q$. In particular, if $X$ is a flag variety, that is, the quotient $G/P$ of a split semisimple group $G$ by a parabolic subgroup $P$, then $\phi$ is an isomorphism as $\CH(X)$ is torsion-free. In \cite{Kar2017a}, Nikita Karpenko conjectured that the morphism $\phi$ is still injective for a generic flag variety $X$, namely:

\begin{con}\label{conjecture}
The morphism in $($\ref{eq:conjecture}$)$ is injective for a generic flag variety $X=E/P$ of a split semisimple group $G$, where $E$ denotes a generic $G$-torsor given by the generic fiber of a $G$-torsor $\GL(N)\to \GL(N)/G$ induced by an embedding of $G$ into a general linear group $\GL(N)$ for some $N\geq 1$ and $P$ denotes a parabolic subgroup of $G$.	
\end{con}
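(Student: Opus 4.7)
Since Conjecture 1.2 is what the abstract promises to \emph{disprove} for $G=\Spin(2n+1)$ with $n=2^k$ and $k\geq 3$, the plan is to construct explicit counter-examples generalising Yagita's original example for $n=8$. Concretely, I want to exhibit, for each such $n$, an element $\alpha\in\CH(X)$ of the generically twisted flag variety $X=E/P$ satisfying $\alpha\neq 0$ but $\phi(\alpha)=0$.

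The approach is as follows. Fix $P$ to be the maximal parabolic subgroup corresponding to the ``spin'' node of the Dynkin diagram (the choice used in Yagita's $n=8$ counter-example), so that $\overline{X}=G/P$ is a maximal orthogonal Grassmannian whose Chow ring is classical and admits an explicit action of the Milnor/Steenrod operations. Analyse $\CH(X)$ via the restriction $\CH(X)\to\CH(\overline{X})$, and reduce modulo $2$ to work in $\Ch(X)$, where the subring of restrictions is characterised by Steenrod-operation constraints coming from the generic torsor $E$. Then (i) guess an explicit monomial $\alpha$ whose degree scales with $2^k$, patterned on Yagita's element, (ii) show $\alpha\neq 0$ in $\Ch(X)$ by computing its image in $\Ch(\overline{X})$ and applying an appropriate Milnor operation, and (iii) lift $\alpha$ to a class in $K(X)$ built from Chern classes of the half-spin representation of dimension $2^n$, and verify via Adams operations (or a direct Chern-character calculation) that this lift lies in $K(X)^{(i+1)}$, forcing $\phi(\alpha)=0$ in the $i$th graded piece of $GK(X)$.

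The main obstacle is the word ``infinite family'': one must produce $\alpha$ and its $K$-theoretic lift \emph{uniformly} in $k\geq 3$. The isolated examples of Yagita, Karpenko, and Baek--Karpenko are ad hoc for small $n$, whereas extending them requires a combinatorial formula whose degree and filtration behaviour can be tracked as the Chow and Grothendieck rings grow with $k$. The delicate point is that in the generically twisted setting one cannot reduce to the split flag variety (where $\phi$ is always an isomorphism), so the shift in the topological filtration caused by the generic torsor must be computed intrinsically, simultaneously for every $k$; balancing non-vanishing in $\Ch(X)$ against a strict jump in the filtration of $K(X)$ for a single explicit element in a single family is where the real work lies.
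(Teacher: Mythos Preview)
Your high-level diagnosis is correct: this is a conjecture the paper \emph{disproves}, and the rough shape of the argument---produce an explicit class, show it is nonzero in $\Ch(X)$ via Steenrod-type operations and the degree map, then show its image in $GK(X)$ drops into a deeper filtration piece---matches the paper's strategy. However, several of your proposed technical choices diverge from what the paper actually does, and at least one of them is a genuine gap.

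First, the paper does \emph{not} exhibit an element $\alpha$ with $\phi(\alpha)=0$ directly. Instead it produces an explicit $x=e^{\frac{n^2}{4}-1}\prod_{j\in J}c(j)$ (with $c(j)$ the Chern classes of the dual tautological bundle, not of the half-spin representation) and shows (i) $x$ is not divisible by $2$ in $\CH(X)$, while (ii) $\phi(x)$ \emph{is} divisible by $2$ in $GK(X)$. Non-injectivity then follows by surjectivity of $\phi$. This reformulation matters: checking $2$-divisibility on both sides is what makes the problem tractable, whereas naming an explicit kernel element would require an explicit preimage of $\phi(x)/2$, which the paper never writes down.

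Second, and more seriously, your step (iii)---lifting to $K(X)$ via the half-spin representation and detecting the filtration jump by Adams operations or a Chern-character computation---is not what the paper does and is not obviously viable. The paper's tool for the $K$-theory side is the extended Rees ring $\widetilde{K}(X)=\bigoplus K(X)^{(i)}t^{-i}$ with its ideal $I(X)=(2,t)$, embedded in $\widetilde{K}(\bar X)$. One shows that a specific preimage $y=f(1)^{\frac{n^2}{4}-1}g(J)$ of $\phi(x)$ lies in $I(\bar X)^{m+1}$ (where $2^m=\ind X$), and then uses restriction--corestriction together with an auxiliary element $z$ to push $y$ all the way into $I(X)$. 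The uniform-in-$k$ combinatorics you correctly flag as the hard part is carried out entirely inside this Rees ring via congruences modulo powers of $I(\bar X)$ (Lemmas on $f(i)^j$, $f(1)^{nj}$, and products with $f(I_3)$), not via Adams operations. Adams operations do not obviously give you control over the \emph{topological} filtration of $K(X)$ at the required precision; the Rees-ring bookkeeping is exactly what tracks the interplay between powers of $2$ and powers of $t$ that encodes filtration depth.

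For step (ii), your plan is closer to the paper's: non-$2$-divisibility of $x$ is shown by computing $S^3(\bar x)$ via the cohomological Steenrod operation, restricting to $\CH(\bar X)$, and checking that the result is $\ind X\cdot p$ modulo $2\ind X$, so that the reduced degree map $2^{-m}\deg:\Ch(X)\to\Z/2\Z$ sends $S^3(\bar x)$ to $1$.
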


This conjecture has been verified in a number of cases, including simple groups $G$ of type $A$ and $C$ (see \cite[Theorem 1.2]{Kar2017}), special orthogonal groups $G$, the simply connected groups $G$ of type $G_{2}$, $F_{4}$, and $E_{6}$ (see \cite[Theorem 3.3]{Kar2017a}).

Now we consider the split spin group $G=\Spin(N)$ of a non-degenerate quadratic form of dimension $N$ over a field $k$. Let $P$ denote a maximal parabolic subgroup whose conjugacy class is obtained by the subset of the Dynkin diagram of $G$ corresponding to removing the last vertex. Then, a generic $G$-torsor $E$ gives rise to an $N$-dimensional generic quadratic form $q$ whose discriminant and Clifford invariant are trivial. The generic flag variety $X=E/P$ becomes a maximal orthogonal grassmannian of $q$. By \cite[Proposition 2.16]{BK} Conjecture \ref{conjecture} with $N=2n+1$ is equivalent to the same conjecture with $N=2n+2$. Thus, in this paper, we shall only consider the maximal orthogonal grassmannian $X$ with $N =2n+1$. 

Conjecture \ref{conjecture} holds for $1\leq n\leq 5$ (see \cite{Kar2018}). On the other hand, the conjecture was first disproved for $n=8, 9$ by Yagita \cite{Yagita}. Later, the counter-examples due to Yagita were extended to $n=8, 9, 10$ over the base field of arbitrary characteristic in \cite{Kar2020} and \cite{BK}. In the present paper, we 
generalize the proof for $n=8$ due to Karpenko and
construct an infinite family of counterexamples:

\begin{thm}\label{mainthm}
Let $n\geq 8$ be a power of $2$ and let $X$ be the maximal orthogonal grassmannian of a generic $n$-dimensional quadratic form with trivial discriminant and Clifford invariant. Then, the canonical epimorphism $\phi:\CH(X)\to GK(X)$ is not injective.
\end{thm}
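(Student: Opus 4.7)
The plan is to mimic and extend Karpenko's counterexample from \cite{Kar2020} for the case $n=8$ to every power of two $n=2^r \geq 8$, by producing an explicit class $\alpha \in \CH(X)$ with $\alpha \neq 0$ but $\phi(\alpha)=0$ in $GK(X)$. I work throughout modulo~$2$, since the torsion in $\CH(X)$ for these varieties is $2$-primary and all the interesting phenomena live in $\Ch(X) := \CH(X)/2$.

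The first step is to fix the standard presentation of $\Ch(X_{\bar k})$ for the split maximal orthogonal grassmannian in terms of the classes $e_1,\ldots,e_n$ (the Chern classes of the tautological maximal isotropic subbundle), together with the parallel description of $K(X_{\bar k})/2$ and its topological filtration, the two rings being linked via the Chern character. The relevant subring to work inside is $\BCh(X) \subset \Ch(X_{\bar k})$, the image of $\Ch(X)$ under restriction; there is an analogous subquotient of $GK(X_{\bar k})/2$ coming from $GK(X)/2$. Since $\phi$ is an isomorphism over $\bar k$, exhibiting a non-zero class in the kernel of the induced map $\BCh(X) \to GK(X)/2$ suffices.

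The heart of the argument is to construct a specific rational class $\alpha \in \BCh(X)$ of some codimension $d$, whose combinatorial shape is dictated by the binary expansion of $n = 2^r$, such that (i) $\alpha$ is non-zero in $\BCh(X)$, and (ii) its lift to $K(X)$ lies in the filtration piece $K(X)^{(d+1)}$, so that $\phi(\alpha)=0$ in $GK^d(X)$. Rationality in (i) will be established by exhibiting $\alpha$ as the value of a Steenrod square $\sq^{2^j}$ applied to a manifestly rational class (for instance, a monomial in the hyperplane class), exploiting that Steenrod squares commute with restriction to the algebraic closure and hence preserve rationality. The filtration shift in (ii) uses precisely that $n=2^r$ via the identities $\binom{2^r}{k}\equiv 0\pmod 2$ for $0<k<2^r$, which force the leading term of the Chern-character expansion of the $K$-theoretic lift of $\alpha$ to vanish modulo~$2$, pushing it into the next filtration step. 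The case $r=3$ should recover Karpenko's original construction for $n=8$.

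The main obstacle is to verify both properties simultaneously and uniformly in $r\geq 3$: a candidate monomial that becomes rational via a Steenrod-square argument must also satisfy the required Chern-character vanishing. Non-vanishing in $\BCh(X)$ requires a sufficiently explicit description of $\BCh(X)$ for generic quadratic forms with trivial discriminant and Clifford invariant; this is accessible via the techniques developed for $n=8$ in \cite{Kar2020}, and I expect the extension to $n=2^r$ to follow from analogous Steenrod-operation arguments together with more intricate combinatorics as $r$ grows. Showing that the $K$-theoretic lift genuinely lands in $K(X)^{(d+1)}$ is the more delicate computational step, since one must track not only the leading Chern-character term but also the interference of the Pfaffian-type relations among the $e_i$ with the target codimension $d$; this is where the $2$-adic binomial identities are used in an essential way, and where I expect the bulk of the proof's work to concentrate.
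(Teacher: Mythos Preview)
Your proposal correctly identifies the overall template (produce an element that is nonzero in $\Ch(X)$ but whose $K$-image drops in filtration), and you are right that Steenrod operations and $2$-adic congruences are the key tools. However, the proposal has two genuine gaps that prevent it from being a proof.

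First, the role you assign to Steenrod operations is inverted relative to what actually works. In the paper the element $x=e^{\frac{n^2}{4}-1}\prod_{j\in J}c(j)$ is \emph{manifestly} rational, since $e$ and the $c(i)$ lie in $\CH(X)$; no Steenrod-square trick is needed to produce it. Steenrod operations enter only to prove that $x$ is \emph{nonzero} modulo $2$: one applies $S^3$ to $\bar{x}$, computes the degree of the result via the restriction map and the explicit torsion index $\ind X=2^{\,n-2\log_2 n+2}$, and shows the answer is odd (Proposition~\ref{lem:indXplus1}). Your plan to exhibit $\alpha$ as $\sq^{2^j}$ of a rational class does not address non-vanishing at all, and non-vanishing is the hard half of the argument.

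Second, your proposed mechanism for the filtration shift---the vanishing of $\binom{2^r}{k}$ modulo $2$ in a Chern-character expansion---is far too coarse. The paper never uses the Chern character; instead it works in the Rees ring $\widetilde{K}(\bar{X})$ and its ideal $I(\bar{X})=(2,t)$, and the crucial step is a delicate estimate $f(1)^{nj}\in I(\bar{X})^{\,j+v(j!)-1}$ (Lemma~\ref{lemK:eonenj}) obtained from a multinomial expansion of $(f(\tfrac n2))^{2j}$. The point is not a single binomial coefficient but the interplay between $v(j!)$ and the number of factors $g(i)$ in the chosen monomial, tuned so that the total power of $I(\bar{X})$ exceeds $v(\ind X)$. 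Equally essential---and entirely absent from your proposal---is a second computation (Proposition~\ref{lemK:indXplus1}(b)) producing an auxiliary rational element congruent to $2^{m-2}\lk$ modulo $I(\bar{X})^{m+1}$, which is what bridges the gap between $I(\bar{X})^{m+1}$ and $I(X)$. Without specifying the set $J$ (which is highly nontrivial: it must simultaneously make the Steenrod-degree computation land on an odd multiple of $\ind X$ and make the Rees-ring estimate reach $I(\bar{X})^{m+1}$), and without the auxiliary element, the argument cannot close.
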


For each 2-power $n \ge 8$, we construct an explicit element $x \in \CH(X)$ (see (\ref{eq:elementx}) below), which 
is not divisible by 2 in $\CH(X)$, but $\varphi(x)$ is divisible by 2 in $GK(X)$. In the following part of introduction, we sketch the proof that $x$ has these properties and provide some ideas behind the construction of such an element $x$. The detailed proof is given in later Sections \ref{sec:congruencerel} and \ref{sec:proofofthm}.

First, by \cite[Proposition 2.1]{Kar2018} the Chow ring $\CH(X)$ is generated by the Chern classes $c(1),\ldots, c(n)$ and an additional element $e\in \CH^{1}(X)$ (see Section \ref{subsection:chow}). Since the Chern classes satisfy the relations (\cite[Theorem 2.1]{Kar2018a}):
\begin{equation}
\label{relationinnobar}
c(i)^{2}=(-1)^{i+1}2c(2i)+2\sum_{k=1}^{i-1}(-1)^{k+1} c(i-k)c(i+k),
\end{equation}
we can rewrite any polynomial in $c(i)$ as a square-free polynomial. Hence, together with relation $c(1)=2e$, it suffices to consider an element $x$ of the form
\begin{equation}\label{xform}
x=e^s \prod_{j \in J} c(j) \in \CH^{l}(X), \text{ where $s\geq 0$ and $J$ is a subset of $[2,n]$}
\end{equation}
for some $l \geq 1$ or a linear combination of elements of this form. In this note, we focus on an element of the form (\ref{xform}).

In the proof of non $2$-divisibility of $x$, we make use of the degree map $\deg: \CH(X)\to \mathbb{Z}$ and the Steenrod operation $S$ on $\Ch(X):=\CH(X)/2\CH(X)$ following \cite{Kar2020} and \cite{BK}. In general, it is quite difficult to check the divisibility of an element in $\CH(X)$. 
However, 
the exact value of the index $\ind X$ 
(i.e., torsion index of $\Spin(2n+1)$) of $X$ is available: 
Let $r=\dim X=\frac{n(n+1)}{2}$. Then
\begin{equation*}
\ind X=2^{m}, 
	\text{ where } m=n-\lfloor \log_2(1+r)\rfloor \text{ or } m=n-\lfloor \log_2(1+r)\rfloor+1
\end{equation*}
(depending on $n$, see \cite{Totaro} for details). In particular, if $n$ is a power of 2, then the second formula for $m$ holds. So, 
it is often possible to determine the non-divisibility of an element in $\CH_{0}(X)=\CH^{r}(X)$ by $2$. 
Namely, since the image of the degree map is equal to $2^{m}\Z$, we get a well-defined homomorphism $2^{-m}\deg:\Ch(X)\to \Z/2\Z$. Moreover,
non-$2$-divisibility of an element $x$ of the form
(\ref{xform}) immediately follows from the non-triviality of the image $S(\bar{x})$ under the map $2^{-m}\deg$, where $\bar{x}$ denote the image of $x$ in $\Ch(X)$. Here, the use of Steenrod operations gives us more flexibility to find such an element $x$ that is non-divisible by $2$, while $\varphi (x)$ is divisible 
by $2$: using Steenrod operations, one can try to find such an element $x$ in an arbitrary graded component of $\CH(X)$.

The degree map is determined by the restriction map $\res:\CH(X)\to \CH(\bar{X})$, where $\bar{X}$ denotes the base change of $X$ to an algebraic closure of $k$. Hence, to show $2^{-m}\deg(S(\bar{x}))\neq 0$, it suffices to prove that 
\begin{equation}\label{keystep}
	\parbox{0.8\textwidth}{\noindent
		the image of an integral representative $x'$ of $S(\bar{x})$ under the restriction map is congruent to $2^{m}p$ modulo $2^{m+1}$,
	}
\end{equation}
where $p$ denote the class of a rational point. In fact, the congruence relation (\ref{keystep}) is the key step for the proof of non $2$-divisibility of $x$. For each $2$-power $n\geq 8$, this is proven in Proposition \ref{lem:indXplus1} by considering the element $x$ 
of the form
(\ref{xform}), where $l=r-3$, $s = n(\tfrac{n}4-1) + n-1$, 
and \begin{equation*}
J =\big([2,\tfrac{n}{4}+1] \cup [\tfrac{3n}{4}-1, n-1]\big) \setminus ( \{5\} \cup \{2^i \mid 2 \le i \le \log_{2}(n)-2\})
\end{equation*}
as in (\ref{eq:setJcases}).

From the formula (\ref{eq:steenrodformula}) ignoring the quadratic part and (\ref{eq:steenrode}), we can find an
integral representative $x'$, which is
a sum of elements of the same form as in 
(\ref{xform}), but with various numbers $s' \geq n(\tfrac{n}4-1) + n-1$ instead of $s$, and with various multi-subsets $J'$ of $[2, n]$ with $|J|=|J'|$ instead of $J$. Then, we check the divisibility of $\res(x') \in \CH(\bar{X})$ by $2$. The Chow ring $\CH(\bar{X})$ is generated by the special Schubert classes $e(1),\ldots, e(n)$ with the relations (\ref{relationinbar}) and the generators $c(i)$ and $e$ of $\CH(X)$ map to $2e(i)$ and $e(1)$ in $\CH(\bar{X})$, respectively, under the restriction map. Note that the relation (\ref{relationinbar}), as well as its powers, become simpler if considered modulo powers of $2$, which makes it easier to check the non-divisibility of an element of $\CH_0(\bar{X})$ by a power of $2$ compared to non-divisibility by other numbers.

Since $e(1)^{n} \equiv e(\tfrac{n}{2})^2 \mod 4$ (here we use the assumption that $n$ is a power of $2$), a direct calculation using a multinomial expansion of the $(\tfrac{n}4-1)$th power of the right-hand side of (\ref{relationinbar}) with $i=\tfrac{n}2$ yields that 
\begin{equation}\label{eq:eonepower}
e(1)^{n(\frac{n}4-1)}\equiv 
-(\tfrac{n}{4}-1) e(n) \cdot  2^{\frac{n}{4}-2}\big(\sum_{k=1}^{\frac{n}2-1}e(\tfrac{n}2-k)e(\tfrac{n}2+k) \big)^{\frac{n}{4}-2}
\mod 2^{\frac{n}2-\log_{2}(n)}, 
\end{equation}
and $e(1)^{n(\frac{n}4-1)}\equiv 0 \mod 2^{\frac{n}2-\log_{2}(n)-1}$ (see Lemma \ref{lemK:eonenj}). 

As for each $J'$ above we have $|J'|=|J|=\frac{n}2-\log_{2}(n)+3$, and $m=n-2\log_2(n)+2$, we see that each summand of $\res(x')$ becomes a multiple of $2^{m}$. 
Now, to conclude the proof, a careful computation is required to see from which multi-subsets $J'$ (and for which exponents $s'$) an extra multiple of $2$ arises. This is done in Corollary \ref{lem:eonepower} by multiplying (\ref{eq:eonepower}) by the Chern classes with indexes in $[\tfrac{3n}{4}-1, n-1]$, in Proposition \ref{lem:indXplus1}, in Remark \ref{lem:indXplus1n8}, and in Lemma \ref{lem:eonenjtorsioncor}.

Now, to show that $\phi(x)$ is divisible by $2$ in $GK(X)$, we use the Rees ring $\widetilde{K}(X)$ and its ideal $I(X)$ generated by $2$ and $t$ that are surjectively mapped onto $GK(X)$ and $2GK(X)$, respectively, by the map $\xi$ (see Section \ref{subsec:rees}). Since $x$ is of the form (\ref{xform}), by (\ref{imageunderphi}) and Lemma \ref{ecliffordtogkxnobar}, we have a standard preimage $w$ of $\varphi(x)$ in $\widetilde{K}(X)$ under $\xi$. By replacing the Chern classes $\ck(i)$ in $w$ with the element $2\ekbar(i)-t\ekbar(i+1)$ (see Lemma \ref{Lem:top}), we obtain 
another preimage $y\in \widetilde{K}(X)$ of $\varphi(x)$ under $\xi$ (i.e., $\xi(y)=\xi(w)=\varphi(x)$)
and show $y\in I(X)$.

In order to prove $y\in I(X)$, we view $y$ as contained in $\widetilde{K}(\bar{X})$ via the embedding $\widetilde{K}(X)\subset \widetilde{K}(\bar{X})$ and adopt an inductive argument as in \cite{Kar2020} and \cite{BK}. For any integers $l$ with $m>r-l\ge 0$ and $j\geq 1$, write 
\begin{equation*}
\widetilde{K}^{l}(\bar{X})\cap I(\bar{X})^{m+j}=2^{m+j}\widetilde{K}^{l}(\bar{X})+2^{m+j-1}t\widetilde{K}^{l+1}(\bar{X})+\cdots + 2^{m+j+l-r}t^{r-l}\widetilde{K}^{r}(\bar{X}).
\end{equation*}
Then, by the restriction-corestriction formula, 
$\ind X \cdot I(\bar X)\subset I(X)$ 
(see (\ref{eq:twoindextindex})). Hence, if $j < r - l$, we have modulo $I(X)$:
\begin{equation*}
\widetilde{K}^{l}(\bar{X})\cap I(\bar{X})^{m+j}\equiv 2^{m-1}t^{j+1}\widetilde{K}^{j+1}(\bar{X})+2^{m-2}t^{j+2}\widetilde{K}^{j+2}(\bar{X})+\cdots + 2^{m+j+l-r}t^{r-l}\widetilde{K}^{r}(\bar{X}).
\end{equation*}
For $j = r - l$, we simply get
$\widetilde{K}^{l}(\bar{X})\cap I(\bar{X})^{m+r-l}\subset I(X)$.

In the proof of Theorem \ref{mainthm}, we consider the case $l=r-3$ and $y\in \widetilde{K}^{l}(X)$ so that by (\ref{Kpl}) we get three congruence equations:
\begin{align*}
\widetilde{K}^{r-3}(\bar{X})\cap I(\bar{X})^{m+1}&\equiv \Z\cdot (2^{m-1}\lk)u^{r-3}+\Z\cdot (2^{m-2}\pk)u^{r-3}&\mod I(X),\\
\widetilde{K}^{r-3}(\bar{X})\cap I(\bar{X})^{m+2}&\equiv \Z\cdot (2^{m-1}\pk)u^{r-3} &\mod I(X),
\end{align*}
and $\widetilde{K}^{r-3}(\bar{X})\cap I(\bar{X})^{m+3}\subset I(X)$, where $\pk$ and $\lk$ denote the classes of a point and a line in $K(\bar{X})$. If the generators $(2^{m-1}\lk)u^{r-3}$ and $(2^{m-2}\pk)u^{r-3}$ are contained in $I(X)+I(\bar{X})^{m+2}$, then
\begin{multline}\label{eq:containmentK}
	\widetilde{K}^{r-3}(\bar{X}) \cap I(\bar{X})^{m+1} \subset \widetilde{K}^{r-3}(\bar{X})\cap (I(\bar{X})^{m+2} + I(X)) \subset \\
	\widetilde{K}^{r-3}(\bar{X})\cap (I(\bar{X})^{m+3} + I(X)) \subset I(X).
\end{multline} 
In addition, if $y$ is contained in $I(\bar{X})^{m+1}$, then by (\ref{eq:containmentK}) we conclude that $y\in I(X)$.

Alternatively, if $(2^{m-1}\lk)u^{r-3}, (2^{m-2}\pk)u^{r-3}\in I(X)$, then 
we could immediately conclude that
\begin{equation}\label{eq:ixbarembedding}
\widetilde{K}^{r-3}(\bar{X}) \cap I(\bar{X})^{m+1} \subset I(X).
\end{equation}

Consequently, the proof of $2$-divisibility of $\phi(x)$ is based on two main ingredients. The first one is to check that $y$ is contained in $I(\bar{X})^{m+1}$ (or a higher power of $I(\bar{X})$), which is proven in Proposition \ref{lemK:indXplus1} (a). This part is similar to the proof, as mentioned above, of the divisibility of each summand of $\res(x')\in \CH(\bar{X})$ by $2^{m}$. Indeed, some parts of the proof for $\res(x')$ even directly follow from the proof for $y$ because of a surjective morphism (\ref{morphismpsi}) from $\widetilde{K}(\bar{X})$ to $\CH(\bar{X})$.

The second ingredient is to show that some 
product of the class of a line or a point by a strict divisor of the torsion index is contained in $I(X)+ I(\bar{X})^{m+2}$
i.e., in our case $(2^{m-1}\lk)u^{r-3}, (2^{m-2}\pk)u^{r-3}\in I(X)+I(\bar{X})^{m+2}$. This is proven in Proposition \ref{lemK:indXplus1} (b) by slightly modifying $y$ into an element $z\in \widetilde{K}^{r-3}(X)$, which is congruent to $(2^{m-2}\lk)u^{r-3}$ modulo $I(\bar{X})^{m+1}$. As an additional consequence of Proposition \ref{lemK:indXplus1} (b), we indeed have $(2^{m-1}\lk)u^{r-3}, (2^{m-2}\pk)u^{r-3}\in I(X)$ (see Remark \ref{rem:proofKpart}). Therefore, we obtain (\ref{eq:containmentK}) and (\ref{eq:ixbarembedding}).

In this note, we focus on values of $n$ that are powers of $2$. This choice is advantageous for some arguments, such as the congruence relation $f(1)^{n}\equiv f(n) \mod I(X)$ given by (\ref{krelationinbarmodi}) and the property that the factorial $(\frac{n}{4})!$ is significantly more divisible by powers of $2$ than $(\frac{n}{4}-1)!$. However, the restriction to powers of $2$ is not always necessary for all arguments. We expect that the arguments requiring $n$ to be a power of $2$ can be extended to other values of $n$, and we plan to present generalizations in future publications, using examples from \cite{Totaro} of elements of $\operatorname{CH}(X)$ of top degree (i.e., of dimension $0$) that become divisible by $\ind X$ but not by $2 \ind X$ 
in $\operatorname{CH}(\bar{X})$.

So, throughout this note, $n$ is a power of $2$ and is bigger than $4$. We denote the integer interval $\{a, a+1, ..., b\}$ by $[a, b]$ for any $a\leq b$. If $b<a$, then $[a,b]$ denotes the empty set.

\section{Grothendieck and Chow rings of orthogonal grassmannians}

Throughout this paper, let $X$ denote the maximal orthogonal grassmannian (i.e., the variety of
$n$-dimensional totally isotropic subspaces) of a generic $(2n+1)$-dimensional quadratic form $q$ of trivial discriminant and Clifford invariant. The index of $X$, denoted by $\ind X$, is defined as the greatest common divisor of the degrees of closed points on $X$. Indeed, the index of $X$ is equal to the torsion index of $\Spin(2n+1)$, which is computed as follows (see \cite{Totaro}):
\begin{equation}\label{torsionindex}
	\ind X=2^{n-2v(n)+2},
\end{equation}
where $n$ is a power of $2$ and $v(n)$ denotes the exponent of $2$ in $n$.

\subsection{Grothendieck ring of orthogonal grassmannians}\label{subsec:Grothendieck}

Let $\bar{X}$ denote $X$ over an algebraic closure of $k$. In general,  by \cite{Panin} the ring $K(X)$ is identified with a subring of $K(\bar{X})$. As the Clifford invariant of $q$ is trivial, we have an isomorphism
\begin{equation}\label{Kidentification}
	K(X)=K(\bar{X}).
\end{equation}
The restriction map $K(X)^{(i)}\to K(\bar{X})^{(i)}$ is injective so that we view it as an inclusion:
\begin{equation}\label{Krestriction}
K(X)^{(i)}\subset K(\bar{X})^{(i)}
\end{equation}
for any $i\geq 1$. In particular, we have $K(X)^{(1)}=K(\bar{X})^{(1)}$. On the other hand, it follows by a restriction-corestriction argument that
\begin{equation}\label{Krescores}
 \ind X\cdot K(\bar{X})^{(i)}\subset K(X)^{(i)}
\end{equation}
for $i\geq 1$.

Write $\ck(i)\in K(X)^{(i)}$ for the $K$-theoretic Chern class of the dual of the (rank $n$) tautological vector bundle $\cT$ on $X$. Note that $\ck(i)=0$ for $i>n$. Let $\bar{Y}$ denote the quadric $Y$ of $q$ over an algebraic closure of $k$. We write $\ekbar(i)\in K(\bar{X})^{(i)}$ for the image of the class of a projective $(n-i)$-dimensional subspace $l_{n-i}$ on $\bar{Y}$ under the composition $(\pi_{1})_{*}\circ (\pi_{2})^{*}$ of the projective bundle $\pi_{1}\colon \cP\to \bar{X}$ given by the tautological vector bundle on $\bar{X}$ and the projection $\pi_{2}\colon \cP\to \bar{Y}$. 
We also set $\ekbar(i) = 0$ for $i > n$.
Then,
the following relations hold.
\begin{lemma}{\cite[Lemma 2.12]{BK}}\label{Lem:top}
	For any $i\geq 0$, the element
	\begin{equation*}
		2\ekbar(i)-\ekbar(i+1)-\ck(i)
	\end{equation*}
	is a sum of monomials in $\ck(1),\ldots, \ck(n)$ of degrees greater than or equal to $i+1$, where the degree of $\ck(j)$ for any $j\geq 0$ is defined to be $j$. In particular, $2\ekbar(i)-\ekbar(i+1)=\ck(i)\, \text{ in }\, 
	GK^i (X)
	$. 
\end{lemma}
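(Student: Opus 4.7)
Plan: The key reduction is that each correction term claimed by the lemma (a monomial in $\ck(1), \ldots, \ck(n)$ of total degree $\geq i+1$) lies in $K(X)^{(i+1)}$, so the ``in particular'' assertion follows immediately from the main assertion. I therefore focus on proving the main assertion, which I would do by an explicit $K$-theoretic computation of $\ekbar(i) = (\pi_1)_*(\pi_2)^*[l_{n-i}]$ using the incidence diagram $\bar{X} \xleftarrow{\pi_1} \cP = \PP(\cT) \xrightarrow{\pi_2} \bar{Y}$, combined with a downward induction on $i$ (with base case $\ekbar(n+1) = 0$).

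First, I would decompose the class $[l_{n-i}]$ in $K(\bar{Y})$ as $h \cdot [l_{n-i+1}] + \rho_i$, where $h = 1 - [\cO_{\bar{Y}}(-1)]$ is the hyperplane class of the odd-dimensional quadric $\bar{Y}$ and $\rho_i \in K(\bar{Y})^{(n+i)}$ is an explicit higher-filtration correction coming from the Schubert-cell structure of $\bar{Y}$. Pulling back via $\pi_2$ and using that $\pi_2^*h = \xi := 1 - [\cO_\cP(-1)]$ is the relative hyperplane class of the projective bundle $\pi_1$ (since $\pi_2$ restricted to each fiber $\PP(V)$ of $\pi_1$ is the linear embedding into $\bar{Y}$), I would obtain $(\pi_2)^*[l_{n-i}] = \xi \cdot (\pi_2)^*[l_{n-i+1}] + (\pi_2)^*\rho_i$ in $K(\cP)$.

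Second, I would apply $(\pi_1)_*$. Using the projective-bundle structure of $\pi_1$ and the Koszul-type relation $\sum_{k=0}^n (-1)^k \xi^k \pi_1^*[\wedge^k \cT^\vee] = 0$ in $K(\cP)$, which converts high powers of $\xi$ into Chern-class polynomials pulled back from $\bar{X}$, the pushforward of $\xi \cdot (\pi_2)^*[l_{n-i+1}]$ decomposes as a $\Z$-linear combination of the $\ekbar(j)$'s (arising from the lower powers of $\xi$) together with explicit polynomials in the $\ck(j)$'s. Rearranging gives a recursion expressing $\ekbar(i+1)$ in terms of $\ekbar(i)$, Chern monomials, and $(\pi_1)_*(\pi_2)^*\rho_i$ (which, by construction of $\rho_i$, contributes only terms of sufficiently high topological filtration). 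Running this recursion by downward induction on $i$, the combination $2\ekbar(i) - \ekbar(i+1) - \ck(i)$ is then seen to be exactly a sum of monomials in the $\ck(j)$'s of total degree $\geq i+1$.

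The main obstacle is the bookkeeping in the second step. In $K$-theory, intermediate computations naturally produce a mixture of $\ekbar$- and $\ck$-classes; the content of the lemma is precisely that the specific combination $2\ekbar(i) - \ekbar(i+1) - \ck(i)$ has all of its $\ekbar$-contributions cancel, leaving only Chern monomials. The Chow-theoretic shadow of this cancellation is the classical Pieri-type identity $2e(i) - e(i+1) = c(i)$ in $\CH(\bar{X})$ (which itself follows from the same incidence computation run in the Chow ring, where $(\pi_1)_*\xi^j$ is a Segre class of $\cT^\vee$); lifting this cleanly to $K$-theory with explicit control of every higher-filtration correction requires a combinatorially delicate accounting of the Koszul-type recursion and its interaction with the $\rho_i$, and is the most delicate part of the argument.
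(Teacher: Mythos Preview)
The paper does not prove this lemma; it is quoted from \cite[Lemma 2.12]{BK} and used as a black box. There is therefore no argument in the present paper to compare your proposal against.

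On your sketch itself: the strategy of computing $\ekbar(i)=(\pi_1)_*(\pi_2)^*[l_{n-i}]$ via the projective-bundle description of $\pi_1$ and a recursion in $i$ is the natural one and is essentially how such identities are produced. One point of confusion, however: your stated ``Chow-theoretic shadow'' $2e(i)-e(i+1)=c(i)$ is not a valid identity in $\CH(\bar X)$, since $e(i+1)$ is homogeneous of degree $i+1$ while $c(i)$ and $2e(i)$ have degree $i$; the actual Chow relation is simply $\res(c(i))=2e(i)$ (see (\ref{eq:rescitwoe})). The term $-\ekbar(i+1)$ in the lemma is a purely $K$-theoretic correction: it already lies in $K(\bar X)^{(i+1)}$, hence vanishes in $GK^i$ and has no Chow shadow at all. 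The content of the lemma is precisely that this term, together with higher Chern monomials, accounts for the failure of $\ck(i)=2\ekbar(i)$ to hold on the nose in $K(\bar X)$ rather than merely in the associated graded. Beyond this, your proposal is a plan rather than a proof: you explicitly defer the ``delicate bookkeeping'' that is the actual substance of the argument, so its correctness cannot be assessed.
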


Let us denote by $\pk$ and $\lk$ the classes of $\prod_{i=1}^{n}\ekbar(i)$ and $\prod_{i=2}^{n}\ekbar(i)$ in $K(\bar{X})^{(\dim\bar{X})}$ and $K(\bar{X})^{(\dim\bar{X}-1)}$, respectively. Then, we have
\begin{equation}\label{Kpl}
	K(\bar{X})^{(\dim\bar{X})}=\Z\cdot \pk \,\,\text{ and }\,\, K(\bar{X})^{(\dim\bar{X}-1)}=\Z\cdot \pk\oplus \Z\cdot \lk.
\end{equation}

\subsection{Rees ring associated to the topological filtration}\label{subsec:rees}

Consider the extended Rees ring $\widetilde{K}(X)$ of the Grothendieck ring $K(X)$ with respect to the topological filtration on $K(X)$, i.e.,
\begin{equation}\label{eq:wideReesK}
	\widetilde{K}(X)=\bigoplus_{i\in\Z}\widetilde{K}^i(X),\, \text{ where }\, \widetilde{K}^i(X)=K(X)^{(i)}t^{-i}
\end{equation}
for a variable $t$. Here we set $K(X)^{(i)} = K(X)$ for $i < 0$. Note also that $K(X)^{(i)} = 0$ for $i > \dim X$. We view $\widetilde{K}(X)$ as a subring of the Laurent polynomial ring $K(X)[t,t^{-1}]$. For notational simplicity, we write $u$ for $t^{-1}$. Observe that $t \in \widetilde{K}(X)$, while $u\not\in \widetilde{K}(X)$.

Let $I(X)$ denote the ideal of $\widetilde{K}(X)$ generated by $t$ and $2$. Then, we have
an isomorphism $\widetilde{K}(X)/t\widetilde{K}(X) \xrightarrow{\sim} GK(X)$. Denote the composition of the projection 
$\widetilde{K}(X) \to \widetilde{K}(X)/t\widetilde{K}(X)$ and this isomorphism by 
\begin{equation}
\label{eq:gkalternativeconstruction}
\xi \colon \widetilde{K}(X) \to GK(X).
\end{equation}
Note that then 
\begin{equation}
\label{eq:xiix2gkx}
\xi (I(X)) = 2GK(X).
\end{equation}

We define $\widetilde{K}(\bar{X})$ and $\bar \xi \colon \widetilde{K}(\bar X) \to GK(\bar X)$ in a similar way as in (\ref{eq:wideReesK}) and (\ref{eq:gkalternativeconstruction}), respectively. By (\ref{Krestriction}), we will treat $\widetilde{K}(X)$ as a 
subring of $\widetilde{K}(\bar{X})$.
Moreover, by (\ref{Krescores}) we have 
\begin{equation}\label{indexrestriction}
	\ind X\cdot\widetilde{K}(\bar{X})\subset \widetilde{K}(X).
\end{equation}
In particular,
\begin{equation}\label{eq:twoindextindex}
	2\ind X\cdot \widetilde K(\bar X),\,\,\,
t\ind X\cdot \widetilde K(\bar X) \subset I (X).
\end{equation}

Let $\bar{\phi}$ denote the morphism in (\ref{eq:conjecture}) for $\bar{X}$. As $\bar{X}$ is a flag variety, $\bar{\phi}$ is becomes an isomorphism. We shall denote by $\psi$ the composition
\begin{equation}\label{morphismpsi}
\psi: \widetilde{K}(\bar{X})
\stackrel{\bar \xi}{\longrightarrow}
GK(\bar{X})\stackrel{\bar \phi^{-1}}{\longrightarrow} \CH(\bar{X})
\end{equation}

We write 
\begin{equation*}
f(i)=\ekbar (i)u^{i}\in \widetilde K^{i}(\bar X) \text{ and } g(i)=2f(i)-tf(i+1)\in \widetilde K^{i}(\bar X)\cap I(\bar{X})
\end{equation*}
for all $i \in \mathbb{N}$. Then, by Lemma \ref{Lem:top}
\begin{equation}\label{eq:gikigicki}
g(i)\in \widetilde K^{i}(X)\,\, \text{ and }\,\, 
\xi (g(i)) = \xi(\ck(i)u^{i}).
\end{equation}
Moreover, by Corollary \ref{ktheorysquaresckcommas}, we have $f(n)^{2}=0$ and by Proposition \ref{ktheorysquaresckproducts}, 
the following relations hold modulo $I(\bar X)^{2}$:
\begin{equation}\label{krelationinbar}
	f(i)^{2}\equiv
	\begin{cases}
			(-1)^{i-1}f(2i)+tf(2i+1)+\sum\limits_{k=1}^{i-1}f(i+k)g(i-k)	& \text{ if } i \text{ is even},\\
		(-1)^{i-1}f(2i)+\sum\limits_{k=1}^{i-1}f(i+k)g(i-k) & \text{ if } i \text{ is odd}.
	\end{cases}
\end{equation}
Instead of using this formula in the whole generality, we shall use it either for $i \ge \tfrac{n}{2}$, or modulo $I(\bar X)$.
If $i \ge \tfrac{n}{2}$, then, since $f(2i+1)=0$, the relations (\ref{krelationinbar}) become
\begin{equation}\label{krelationinbarlargeimodsquare}
	f(i)^{2}\equiv
        (-1)^{i-1}f(2i)+\sum\limits_{k=1}^{i-1}f(i+k)g(i-k)	\mod I(\bar X)^{2},
\end{equation}
regardless of the parity of $i$. Modulo $I(\bar X)$,
we simply have
\begin{equation}\label{krelationinbarmodi}
	f(i)^{2}\equiv
        f(2i) \mod I(\bar{X})
\end{equation}
for any $i \in [1,n]$.

\subsection{Chow ring of orthogonal grassmannians}\label{subsection:chow}

Let $c(i)\in \CH^{i}(X)$ denote the Chern class of the dual of the tautological vector bundle $\cT$ and let $e(i)\in \CH^{i}(\bar{X})$ denote the image of the class $l_{n-i}\in \CH^{n-i}(\bar{X})$ of a projective $(n-i)$-dimensional subspace on $\bar{Y}$ under the composition $(\pi_{1})_{*}\circ (\pi_{2})^{*}$. Since the morphism $\phi$ in (\ref{eq:conjecture}) commutes with Chern classes, we have
\begin{equation}\label{imageunderphi}
\phi\big(c(i)\big)=\ck(i)+ K(X)^{(i+1)}.
\end{equation}
Moreover, the image of $e(i)$ under the isomorphism $\bar{\phi}$ is given by
\begin{equation}\label{imageunderphibar}
	\bar \phi\big(e(i)\big)=\ekbar(i)  + K(\bar X)^{(i+1)}.
\end{equation}

As an abelian group, $\CH(\bar{X})$ is freely generated by the set of all products of the form $\prod_{i\in I}e(i)$, where $I$ is an arbitrary subset of $[1, n]$. The Chow ring $\CH(\bar{X})$ is generated by $e(1),\ldots, e(n)$ subject to the relations
\begin{equation}
\label{relationinbar}
e(i)^{2}=(-1)^{i+1}e(2i)+2\sum_{k=1}^{i-1}(-1)^{k+1} e(i-k)e(  
i+k)
\end{equation}
for all $i\geq 1$, where we set $e(i)=0$ for $i>n$. In particular, we shall denote by $p$ the class of a rational point, i.e., $p=\prod_{i=1}^{n}e(i)\in \CH(\bar{X})^{(\dim\bar{X})}$. By \cite[Proposition 86.13]{EKM}, we have
\begin{equation}\label{eq:rescitwoe}
\res\big(c(i)\big)=2e(i)
\end{equation}
for all $1\leq i\leq n$, where $\res: \CH(X)\to \CH(\bar{X})$ denotes the restriction map.

By \cite[\S2]{MT}, there is an exact sequence of abelian groups:
\begin{equation*}
0\to \CH^{1}(X)\stackrel{\res}{\longrightarrow} \CH^{1}(\bar{X})\to \Br(k),
\end{equation*}
where the second map sends the generator $e(1)$ to the Brauer class of the even Clifford algebra of $q$. Since the Clifford invariant of $q$ is trivial, i.e., the Brauer class of the even Clifford algebra of $q$ is trivial, the restriction map is an isomorphism so that 
\begin{equation*}
\res(e)=e(1)
\end{equation*}
for some $e\in \CH^{1}(X)$. As $\res(c(1))=2e(1)$, we have $c(1)=2e$.

Since $K(X)^{(1)}=K(\bar X)^{(1)}$, the element $\ekbar(1) \in K(X)^{(1)}$ defines a class 
$\ekbar(1) + K(X)^{(2)}$ in $GK^{1}(X)$. In particular, we have
\begin{lemma}
\label{ecliffordtogkxnobar}
$\phi(e)=\ekbar(1)+ K(X)^{(2)}$.
\end{lemma}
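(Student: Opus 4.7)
The plan is to deduce this from the naturality of $\phi$ under restriction to $\bar{X}$, combined with the formula (\ref{imageunderphibar}) for $\bar{\phi}$.

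First I would observe that $\phi$ commutes with pullback along the base change $\bar{X} \to X$, giving a commutative square whose vertical arrows are $\res \colon \CH^1(X) \to \CH^1(\bar{X})$ and the induced $\res \colon GK^1(X) \to GK^1(\bar{X})$, and whose horizontal arrows are $\phi$ and $\bar{\phi}$. Applying this to $e \in \CH^1(X)$ and using $\res(e) = e(1)$ together with (\ref{imageunderphibar}), I obtain
\[
\res(\phi(e)) = \bar{\phi}(e(1)) = \ekbar(1) + K(\bar{X})^{(2)}.
\]
The candidate right-hand side $\ekbar(1) + K(X)^{(2)} \in GK^1(X)$ (well-defined by the remark preceding the lemma) restricts to the same class $\ekbar(1) + K(\bar{X})^{(2)}$. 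So both $\phi(e)$ and $\ekbar(1) + K(X)^{(2)}$ have the same image under $\res \colon GK^1(X) \to GK^1(\bar{X})$.

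It remains to show that this restriction on $GK^1$ is injective. For this I would use that $\CH^1(X) = \mathbb{Z}\cdot e$ is cyclic of rank one, which follows from the exact sequence displayed just above (\ref{eq:rescitwoe}) and the triviality of the Clifford invariant. Since $\phi^1$ is surjective as a standard consequence of Riemann--Roch, $GK^1(X)$ is cyclic, generated by $\phi(e)$. By the commutative square, the composition $\CH^1(X) \xrightarrow{\phi^1} GK^1(X) \xrightarrow{\res} GK^1(\bar{X})$ coincides with $\bar{\phi}^1 \circ \res$, which sends $e$ to $\ekbar(1) + K(\bar{X})^{(2)}$, a generator of $GK^1(\bar{X}) \cong \mathbb{Z}$ (using that $\bar{\phi}$ is an isomorphism, since $\bar{X}$ is a split flag variety). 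Hence this composition is a generator-to-generator map $\mathbb{Z} \to \mathbb{Z}$, which forces both $\phi^1$ and the restriction on $GK^1$ to be isomorphisms. Matching the generator $\phi(e)$ of $GK^1(X)$ with the generator $\ekbar(1) + K(X)^{(2)}$ then yields the equality claimed in the lemma.

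The main technical subtlety I expect to face is the possible strict inclusion $K(X)^{(2)} \subsetneq K(\bar{X})^{(2)}$: naively, $\ekbar(1) + K(X)^{(2)}$ and $\phi(e)$ could differ by a nonzero class in $K(\bar{X})^{(2)}/K(X)^{(2)}$, so a direct argument comparing representatives inside $K(X) = K(\bar{X})$ seems not to go through without extra information. The rank-one argument above is precisely what bypasses this issue, bootstrapping from the functoriality of $\phi$ and the already-available isomorphism $\bar{\phi}$.
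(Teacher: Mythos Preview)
Your proof is correct and follows essentially the same approach as the paper: set up the commutative square relating $\phi$, $\bar{\phi}$, and the two restriction maps, observe that $\phi(e)$ and $\ekbar(1)+K(X)^{(2)}$ have the same image in $GK^1(\bar{X})$, and conclude by showing that $\res\colon GK^1(X)\to GK^1(\bar{X})$ is injective. The only cosmetic difference is that the paper deduces this injectivity in one line by noting that the other three arrows in the square are already isomorphisms (in particular $\phi^1$ is an isomorphism because its kernel is annihilated by $(1-1)!=1$), whereas you rederive it via the generator-to-generator argument; your final sentence about ``matching generators'' is then redundant, since the equality already follows from injectivity of $\res$ on $GK^1$.
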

\begin{proof}
Since $\varphi$ and $\bar \varphi$ commute with the field extension, we have the following commutative diagram:
\begin{equation*}
\xymatrix{
\CH^1(X) \ar[r]^{\varphi^1} \ar[d]^{\res} & GK^1(X) \ar[d]^{\res} \\
\CH^1(\bar X) \ar[r]^{\bar \varphi^1} & GK^1(\bar X).
}
\end{equation*}
Since all maps except the right vertical map are isomorphisms, the right vertical map $\res:GK^{1}(X)\to GK^{1}(\bar{X})$ is an isomorphism as well.

As $\res(\ekbar (1) + K(X)^{(2)}) = \ekbar (1) + K(\bar X)^{(2)}$ and $\bar \phi (\res(e)) = \ekbar (1) + K(\bar X)^{(2)}$ by (\ref{imageunderphibar}), both $\ekbar (1) + K(X)^{(2)}$ and $\varphi (e)$ have the same image under $\res:GK^{1}(X)\to GK^{1}(\bar{X})$, whence the proof follows.\end{proof}

Let $\Ch(X)$ denote the modulo $2$ Chow group, i.e., $\Ch(X):=\CH(X)/2\CH(X)$. For any $x\in \CH(X)$, we write $\bar{x}$ for the image of $x$ in $\Ch(X)$. Consider the total cohomological Steenrod operation $S:\Ch(X)\to \Ch(X)$ as in \cite{EKM} ($\Char{k}\neq 2$) and in \cite{Primozic} ($\Char{k}=2$). The operation commutes with pull-back morphisms, so it can be viewed as a contravariant functor from the category of smooth varieties to the category of abelian groups. Moreover, the Steenrod operation satisfies Cartan formula (\cite[Corollary 61.15]
{EKM}), i.e., it is a ring homomorphism.

For any $j\geq 0$, we denote by $S^{j}:\Ch^{*}(X)\to \Ch^{*+j}(X)$ the $j$th component of $S$. In particular, $S^{0}$ is the identity map. A formula for the values of $S^{j}$ on the Chern classes is given in \cite[Th\'eor\`eme 7.1]{Borel}:
	\begin{equation}\label{eq:steenrodformula}
S^j\big(\bar{c}(i)\big)=\binom{i-1}{j} \bar{c}(i+j)+Q(i,j)
\end{equation}
for any $i\geq0$ and $j\geq 1$, where $Q(i,j)$ denotes a linear combination of $\bar{c}(1)\bar{c}(i+j-1),\dots,\bar{c}(i)\bar{c}(j)$. We also have
\begin{equation}
\label{eq:steenrode}
	S(\bar{e})=\bar{e}+\bar{e}^{2}.
\end{equation}

\section{Congruence relations for split orthogonal grassmannians}\label{sec:congruencerel}

In this section, we shall compute some basic congruence relations in both the extended Rees ring $\widetilde K(\bar X)$ and the Chow ring $\CH(\bar{X})$.

Let us recall some basic notions concerning multisets and introduce some specific notations. A \emph{multiset} is an unordered collection of elements with duplicates allowed. The \emph{cardinality} of a multiset $J$ is the sum of the multiplicities of all its elements and is denoted by $|J|$. The \emph{sum} of two multisets $J$ and $L$, denoted by $J+L$, is the multiset such that the multiplicity of an element is equal to the sum of the multiplicities of the element in $J$ and $L$. 
We say that a multiset $J$ is a \emph{multi-subset} of a set $S$ and write 
$J\subset S$, if every element of $J$ is an element of $S$ (note that we allow multiplicities greater than 1 in $J$ here). 
For any multi-subset $J$ of $[1,n]$,  we write 
\begin{equation*}
	\ekbar(J)=\Prod_{j \in J} \ekbar(j) \in K(\bar X) \text{ and } e(J)=\Prod_{j \in J} e(j) \in \CH(\bar X).
\end{equation*}
Similarly, we write
\begin{equation*}
	f(J)=\Prod_{j \in J} f(j)
	\in \widetilde K^{|J|}(\bar X) \text{ and } g(J)=\Prod_{j \in J} g(j)
	\in \widetilde K^{|J|}(\bar X).
\end{equation*}

For a nonzero element $a\in \widetilde K(\bar X)$, we write $\vk(a)$ for the highest power of $I(\bar{X})$ containing $a$. Similarly, for a nonzero element $b$ in $\mathbb{Z}$ or $\CH(\bar{X})$ we write $v(b)$ for the highest power of $2$ dividing $b$.

We shall write 
\begin{equation*}
I_{0}:=[\tfrac{n}{2}+1, n-1]=[\tfrac{4n}{8}+1, \tfrac{5n}{8}]\cup [\tfrac{5n}{8}+1, \tfrac{6n}{8}-2]\cup [\tfrac{6n}{8}-1, n-1]=:I_{1}\cup I_{2}\cup I_{3}
\end{equation*}
and $\bar{I}_{3}=I_{3}\cup \{n\}$. 
We set $I_1= \emptyset$ for $n=8$.

In the following, we find some congruence relations modulo certain powers of $I(\bar X)$ and $2$, respectively, for some elements of $\widetilde{K}(\bar X)$ and $\CH(\bar X)$ that can be written as products of $f(i)$'s and $g(i)$'s, and of $e(i)$'s, respectively. We start with powers of a single factor $f(i)$ or $e(i)$.

\begin{lemma}\label{lemK:eij}
	Let $i\in I_{0}$ and $j \in \mathbb{N}$
	be integers. Then, 
	\begin{equation}\label{eq:lemK:eij}
		f(i)^{j}\equiv \sum a(J) f(J) \mod I(\bar X)^{v(j!)+1}\,\, \text{ and }\,\, e(i)^{j}\equiv 2^{v(j!)}\sum e(J) \mod 2^{v(j!)+1},
	\end{equation}
	where $a(J)\in I(\bar X)^{v(j!)}$ and
	the sums range over some multi-subsets $J\subset [1,n]$ such that $|J|=j$. 
	In particular, 
	\begin{equation*}
		\vk\big(f(i)^{j}\big),\, v\big(e(i)^{j}\big)\geq v(j!).
	\end{equation*}
	Moreover, if $j \geq 2$, then the multisets $J$ above satisfy $J\,\cap\, [i+1,n]\neq \emptyset$.
	
	Furthermore, if $i\in I_{2}$ and $j \geq 2$, then the same relations $($\ref{eq:lemK:eij}$)$ hold, where the sum ranges over some multi-subsets $J$ with $|J|=j$, $J\,\cap\, [i+1,n]\neq \emptyset$, and such that either $J\subset I_{1}\cup I_{2}$ or $J\cap \bar{I}_{3}\neq \emptyset$.
\end{lemma}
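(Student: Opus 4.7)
The plan is strong induction on $j$, handling the two statements in parallel. The base case $j = 1$ is immediate with $J = \{i\}$. For $j = 2$, since $i \in I_0$ forces $2i > n$ and hence $e(2i) = f(2i) = 0$, the relation (\ref{relationinbar}) becomes the exact identity
\[
e(i)^2 \;=\; 2\tilde B_i, \qquad \tilde B_i := \sum_{k=1}^{n-i}(-1)^{k+1}e(i-k)e(i+k),
\]
while (\ref{krelationinbarlargeimodsquare}) gives $f(i)^2 \equiv B_i \mod I(\bar X)^2$ with $B_i := \sum_{k=1}^{n-i} f(i+k)g(i-k) \in I(\bar X)$. Odd $j \geq 3$ reduces to the preceding even case via $f(i)^j = f(i)^{j-1}\cdot f(i)$ together with $v(j!) = v((j-1)!)$.

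The heart of the argument is the even step $j = 2m$ with $m \geq 2$. I write $f(i)^{2m} = (B_i + R)^m$ with $R \in I(\bar X)^2$ (and $e(i)^{2m} = 2^m\tilde B_i^m$), apply the multinomial theorem, and group by multiplicity patterns $(m_k)_k$ with $\sum_k m_k = m$:
\[
B_i^m \;=\; \sum_{(m_k)} \frac{m!}{\prod_k m_k!} \prod_k f(i+k)^{m_k} g(i-k)^{m_k}.
\]
Three contributions to the $\vk$-valuation of each summand arise: the multinomial coefficient has $2$-valuation $v(m!) - \sum v(m_k!)$; the product $\prod_k f(i+k)^{m_k}$ has valuation at least $\sum v(m_k!)$ by the inductive hypothesis at $(i+k, m_k)$, valid since $m_k \leq m < j$ and $i+k \in I_0 \cup \{n\}$ (with $f(n)^{m_k} = 0$ for $m_k \geq 2$); the product $\prod_k g(i-k)^{m_k}$ has valuation at least $m$. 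These sum to exactly $v((2m)!) = m + v(m!)$, yielding both the bound $B_i^m \in I(\bar X)^{v((2m)!)}$ and the structural form $\sum a(J) f(J)$ with $|J| = 2m$. The condition $J \cap [i+1,n] \neq \emptyset$ follows since every $f(i+k)^{m_k}$ with $m_k \geq 1$ contributes a multiset containing an index $> i$ (for $m_k = 1$ trivially, for $m_k \geq 2$ by the inductive form).

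For the $K$-theoretic case one must further show that each cross term $\binom{m}{l} B_i^{m-l} R^l$ for $1 \leq l \leq m$ lies in $I(\bar X)^{v((2m)!)+1}$. The same multinomial estimate gives $\vk(B_i^{m-l}) \geq (m-l) + v((m-l)!)$, and $\vk(R^l) \geq 2l$. Combined with $v\binom{m}{l} = v(m!) - v(l!) - v((m-l)!)$, the total valuation becomes $v(m!) + m + (l - v(l!))$; since $l - v(l!) \geq 1$ for every $l \geq 1$, each cross term has the required extra factor of $I(\bar X)$. The error introduced by substituting the inductive form for each $f(i+k)^{m_k}$ is absorbed by the same bookkeeping. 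The Chow case is simpler because (\ref{relationinbar}) is exact.

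For the ``furthermore'' assertion with $i \in I_2$, the key geometric identity is $(i+k) + (i-k) = 2i \geq \tfrac{5n}{4}+2$: whenever a factor $g(i-k)$ contributes a ``small'' index $\leq n/2$ (which requires $k \geq i - n/2 \geq \tfrac{n}{8}+1$), its partner satisfies $i+k \geq \tfrac{3n}{4}+2$ and hence lies in $\bar I_3$. A brief case analysis on whether the corresponding $i+k$ lies in $I_2$ (apply the stronger inductive form, which produces $J_k \subset I_1\cup I_2$ or $J_k \cap \bar I_3 \neq \emptyset$) or in $\bar I_3$ (where the inductive form automatically supplies an index $\geq i+k+1 \in \bar I_3$) shows every resulting multiset $J$ either avoids $[1,n/2]$ entirely, in which case $J \subset I_1 \cup I_2$, or contains an element of $\bar I_3$. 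The most delicate point I anticipate is the error-term accounting in the $K$-theoretic case, which ultimately rests on the elementary inequality $l - v(l!) \geq 1$; the remainder of the argument is routine multinomial bookkeeping.
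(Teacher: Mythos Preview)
Your argument is correct and complete, but it follows a genuinely different inductive scheme from the paper's. The paper first reduces to $j$ a power of $2$ via the binary expansion (using that $v(j!) = \sum_l v((2^{a_l})!)$ when $j = \sum_l 2^{a_l}$), and then inducts by doubling: assuming the form $f(i)^j \equiv \sum_J a(J) f(J)$, it simply squares this sum, handling the diagonal terms $a(J)^2 f(J)^2$ by extracting a single factor $f(k)^2$ with $k \in J \cap [i+1,n]$ and applying the $j=2$ case. No multinomial expansion or cross-term estimate for $(B_i+R)^m$ is needed. Along the way the paper actually proves a slightly sharper statement---each $a(J)$ is a sum of monomials $2^q t^{v(j!)-q}$---and then deduces the $\CH(\bar X)$ relation by applying the map $\psi$ of (\ref{morphismpsi}), rather than running a parallel argument in $\CH(\bar X)$ as you do.

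Your route is more direct in that it avoids the binary-expansion reduction, at the cost of the extra bookkeeping for the binomial cross terms $\binom{m}{l} B_i^{m-l} R^l$; the key inequality $l - v(l!) \geq 1$ that you isolate is exactly what makes these negligible. Both approaches rest on the same valuation identity $v((2m)!) = m + v(m!)$ and the same use of the inductive hypothesis at smaller exponents, so the difference is organizational rather than conceptual. Your parallel treatment of the Chow case is also fine, and in fact slightly simpler than the paper's since the relation (\ref{relationinbar}) is exact, whereas the paper's deduction via $\psi$ requires the sharper monomial form of $a(J)$.
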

\begin{proof}
Instead of proving the lemma as it is stated, claiming simply that $a(J)\in I(\bar X)^{v(j!)}$, let us prove a stronger statement: $a(J)$ is a sum of 
terms of the form
\begin{equation}\label{eq:aform}
2^q t^{v(j!)-q} \text{ for some } q \in [0, v(j!)].
\end{equation}

If $j = 1$, then the statement is trivial. For the case of arbitrary $j\geq 2$ 
	we show the first equation in (\ref{eq:lemK:eij}). Let $i\in I_{0}$. By the binary expansion of $j$, it suffices to prove the statement for any integer $j$ that is a power of $2$. We prove by induction on $j\geq 2$. Assume that $j=2$. Then, as $\ekbar(2i)=\ekbar(2i+1)=0$ for any $i\in I_{0}$, we have $f(2i)=f(2i+1)=0$, thus the statement follows by (\ref{krelationinbarlargeimodsquare}). 
	Assume that the statement holds for $j$. Then, modulo $I(\bar X)^{2v(j!)+2}$ we have
	\begin{equation}\label{eqK:sumjj}
		f(i)^{2j}\equiv\big(\sum_{J} a(J) f(J)\big)^{2}=\sum_{J} a(J)^{2} f(J)^{2}+\sum_{J\neq J'} 2a(J)^{2} f(J+J').
	\end{equation}
	Let $k\in J\cap [i+1, n]$ and $J^{c}=J-\{k\}$. Then, the case $j=2$ implies that 
	\begin{equation*}
		f(J)^{2}=f(k)^{2}f(J^{c}+J^{c})=\sum_{L} b(L) f(L)f(J^{c}+J^{c})=\sum_{L} b(L) f(L+J^{c}+J^{c}),
	\end{equation*}
	where $L$ denotes a multi-subset such that $|L|=2$ and $L\cap [k+1, n]\neq \emptyset$, and $b(L)=2$ or $t$. Since $2v(j!)+1=v\big((2j)!\big)$, each summand in (\ref{eqK:sumjj}) satisfies the statement. The same proof works in the case $i\in I_{2}$.
	
Furthermore, since $a(J)$ is a sum of terms of the form (\ref{eq:aform}), we have 
$\psi(a(J)) \equiv 2^{v(j!)} \mod 2^{v(j!)+1}$ 
or
$\psi(a(J)) \equiv 0 \mod 2^{v(j!)+1}$, where $\psi$ denotes the morphism in (\ref{morphismpsi}), so the second equation in (\ref{eq:lemK:eij}) follows.
\end{proof}

As a corollary of this lemma, we can observe the behavior of powers of $f(i)g(n-i)$ after the multiplication by $f(\bar{I}_3)$.

\begin{cor}\label{lemK:eijIthree}
	Let $j\geq 2$ be an integer. Then, modulo $I(\bar X)^{v(j!)+j+1}$ we have 
	\begin{equation*}
	f(i)^{j}\cdot g(n-i)^{j}\cdot f(\bar{I}_{3})\equiv
	\begin{cases}
	0 & \text{ if i}\in I_{1},\\
	\sum a(J) f(J) f(\bar{I}_{3})      & \text{ if i}\in I_{2}     
	\end{cases}
	\end{equation*}
	for some $a(J)\in I(\bar X)^{v(j!)+j}$, where the sum ranges over some multi-subsets $J\subset I_{1}\cup I_{2}$ with $|J|=j+1$.
\end{cor}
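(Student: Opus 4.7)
The plan is to apply Lemma \ref{lemK:eij} to rewrite $f(i)^j$ modulo a sufficient power of $I(\bar X)$, and then multiply by $g(n-i)^j$ and $f(\bar I_3)$. By the lemma, $f(i)^j \equiv \sum a(J) f(J) \pmod{I(\bar X)^{v(j!)+1}}$ with $a(J) \in I(\bar X)^{v(j!)}$ and $|J|=j$. Since $g(n-i)^j \in I(\bar X)^j$ (as $g(n-i) = 2f(n-i) - tf(n-i+1) \in I(\bar X)$), the lemma's error contributes only to $I(\bar X)^{v(j!)+j+1}$, and one is left with analyzing each product $a(J) f(J) g(n-i)^j f(\bar I_3)$ whose coefficient $a(J) g(n-i)^j$ already lies in $I(\bar X)^{v(j!)+j}$. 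The question then is whether $f(J) f(\bar I_3)$ supplies a further factor in $I(\bar X)$ (pushing the term into $I(\bar X)^{v(j!)+j+1}$) or survives to produce the claimed sum.

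For $i \in I_2$, the stronger part of Lemma \ref{lemK:eij} forces each $J$ to satisfy either $J \subset I_1 \cup I_2$ or $J \cap \bar I_3 \neq \emptyset$. In the second case, $f(J) f(\bar I_3)$ contains a factor $f(k)^2$ with $k \in \bar I_3$; either $k=n$ and $f(n)^2=0$, or $k \in I_3$, in which case $2k > n$ forces $f(2k)=0$, so relation (\ref{krelationinbarlargeimodsquare}) reduces $f(k)^2$ to a sum of terms each containing a $g$-factor, placing $f(k)^2 \in I(\bar X)$. Thus these terms are absorbed into $I(\bar X)^{v(j!)+j+1}$. The surviving contributions with $J \subset I_1 \cup I_2$ yield, after expanding
\[
g(n-i)^j = \sum_{k=0}^{j} \binom{j}{k} 2^{j-k}(-t)^k f(n-i)^{j-k} f(n-i+1)^k
\]
and reducing the resulting squares by (\ref{krelationinbarlargeimodsquare}), the claimed sum with $J' \subset I_1 \cup I_2$ and $|J'| = j+1$.

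For $i \in I_1$, Lemma \ref{lemK:eij} only guarantees $J \cap [i+1,n] \neq \emptyset$; one uses the same squares-reduction via (\ref{krelationinbarlargeimodsquare}), exploiting $f(m)^2 \in I(\bar X)$ for every $m \geq n/2+1$ (again because $2m > n$), to extract the needed extra $I(\bar X)$-factor for every term. The main obstacle will be the careful bookkeeping of the $I(\bar X)$-valuation through iterated applications of (\ref{krelationinbar}) and (\ref{krelationinbarlargeimodsquare}), in particular handling the subcases in which all elements of $J$ lie in $I_1 \cup I_2$ with multiplicity one and no overlap with $\bar I_3$. The slack for this bookkeeping comes from the bound $v(j!) \leq j-1$ (with equality when $j$ is a power of $2$), guaranteeing that the accumulated factors of $I(\bar X)$ from successive reductions always suffice to reach $I(\bar X)^{v(j!)+j+1}$.
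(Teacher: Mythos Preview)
Your treatment of the case $i\in I_2$, $J\cap \bar I_3\neq\emptyset$ is correct and matches the paper. The rest has a real gap.

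For $i\in I_1$, once you have written $f(i)^j\equiv\sum a(J)f(J)$ with $a(J)\in I(\bar X)^{v(j!)}$ and $g(n-i)^j\in I(\bar X)^j$, you still need one more factor of $I(\bar X)$. You try to find it in $f(J)f(\bar I_3)$, but you yourself note the problematic subcase: $J\subset I_1\cup I_2$, all multiplicities one. In that situation $f(J)f(\bar I_3)$ contains no squares whatsoever, so no reduction via (\ref{krelationinbar}) or (\ref{krelationinbarlargeimodsquare}) produces an extra factor. The inequality $v(j!)\le j-1$ is irrelevant here: it does not manufacture an element of $I(\bar X)$; it merely bounds a quantity that is already accounted for in $a(J)$. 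So this subcase is genuinely unhandled.

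The missing observation is that the extra factor comes from $g(n-i)^j\cdot f(\bar I_3)$, not from $f(J)$. Since $j\ge 2$, every summand of the binomial expansion of $g(n-i)^j$ is, modulo $I(\bar X)^{j+1}$, of the form $b\,f(n-i)^2$ or $b\,f(n-i+1)^2$ with $b\in I(\bar X)^j$ (the single exceptional cross-term for $j=2$ has coefficient $-4t\in I(\bar X)^3$). Now apply (\ref{krelationinbarmodi}) --- not (\ref{krelationinbarlargeimodsquare}), since $n-i<\tfrac{n}{2}$ --- to get $f(n-i)^2\equiv f(2n-2i)$ and $f(n-i+1)^2\equiv f(2n-2i+2)$ modulo $I(\bar X)$. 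For $i\in I_1$ one checks $2n-2i,\,2n-2i+2\in \bar I_3$, so multiplying by $f(\bar I_3)$ creates a square $f(m)^2$ with $m\in\bar I_3$, hence $m>\tfrac{n}{2}$ and $f(m)^2\in I(\bar X)$. This shows $g(n-i)^j f(\bar I_3)\in I(\bar X)^{j+1}$ outright, and combined with $f(i)^j\in I(\bar X)^{v(j!)}$ from Lemma~\ref{lemK:eij} the $I_1$ case follows.

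The same device closes the $I_2$ case and explains why $|J'|=j+1$: modulo $I(\bar X)^{j+1}$, the factor $g(n-i)^j$ contributes exactly \emph{one} new $f$-index, namely $2n-2i$ or $2n-2i+2$, which for $i\in I_2$ lies in $I_1\cup I_2$ (with the single exception $2n-2i+2=\tfrac{3n}{4}\in I_3$, which is then absorbed as in the $I_1$ argument). Setting $J'=J+\{2n-2i\}$ or $J+\{2n-2i+2\}$ and $a(J')=b\cdot a(J)$ gives the claimed sum. Your sketch of ``expanding $g(n-i)^j$ and reducing squares'' never isolates this single extra index and so does not reach $|J'|=j+1$.
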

\begin{proof}
	For $j\geq 2$, the binomial expansion of $g(n-i)^{j}$ tells us that each summand (modulo $I(\bar X)^{j+1}$) 
	is divisible by 
	$f(n-i)^{2}$ or $f(n-i+1)^{2}$ and moreover, it can be written in the form
	$bf(n-i)^{2}$ or $bf(n-i+1)^{2}$ for some $b\in I(\bar X)^{j}$. 
	It follows by (\ref{krelationinbarmodi})
	that
	\begin{equation*}
	f(n-i)^{2}\equiv f(2n-2i),\,\,  f(n-i+1)^{2}\equiv f(2n-2i+2)
	\mod I(\bar X)
	\end{equation*}   
	for any $i\in I_{1}\cup I_{2}$.

	Assume that $i\in I_{1}$. Then, $2n-2i,\, 2n-2i+2\in \bar{I}_{3}$, thus we get
	\begin{equation*}
	bf(n-i)^{2}\cdot f(\bar{I}_{3})\equiv bf(n-i+1)^{2}\cdot f(\bar{I}_{3})\equiv 0 \mod I(\bar X)^{j+1}.
	\end{equation*}
	Hence, by Lemma \ref{lemK:eij} each summand of $f(i)^{j}g(n-i)^{j}f(\bar{I}_{3})$ is contained in $I(\bar X)^{v(j!)+j+1}$.

	Now we assume that $i\in I_{2}$. Then $2n-2i \in I_1 \cup I_2$ and $2n-2i +2 \in I_1 \cup I_2 \cup \{\tfrac{6n}8\}$. If $2n-2i+2 = \tfrac{6n}8$, then again, 
		by Lemma \ref{lemK:eij}, the summands of $f(i)^{j}g(n-i)^{j}f(\bar{I}_{3})$ 
		divisible by
		$f(n-i+1)^{2}$
		are contained in $I(\bar X)^{v(j!)+j+1}$.
	
	Consider a summand of $g(n-i)^j$ of the form $bf(n-i)^{2}$ or $bf(n-i+1)^{2}$ 
		with $2n-2i \in I_1 \cup I_2$ or $2n-2i +2 \in I_1 \cup I_2$, respectively. 
		Let us still rewrite it (modulo $I(\bar X)^{j+1}$) as $bf(2n-2i)$ or $bf(2n-2i+2)$. By Lemma \ref{lemK:eij}, we get
	\begin{equation*}
	f(i)^{j}\equiv \sum a(J') f(J') \mod I(\bar X)^{v(j!)+1}
	\end{equation*}
	for some $a(J')\in I(\bar X)^{v(j!)}$, where the sum ranges over some multi-subsets $J'$ with $|J'|=j$ such that either $J'\subset I_{1}\cup I_{2}$ or $J'\cap \bar{I}_{3}\neq \emptyset$. Set $J=J'+ \{2n-2i\}$ or $J'+\{2n-2i+2\}$ and $a(J)=b\cdot a(J')$. Then, as $f(J')\cdot f(\bar{I}_{3})\equiv 0 \mod I(\bar X)$ for any $J$ with $J\cap I_{3}\neq \emptyset$, the statement follows.\end{proof}

Let us use these results to express powers of $f(1)^{n}$ in terms of powers of $f(i)$ and $g(i)$, and powers of $e(1)^{n}$ in terms of powers of $e(i)$ with different values of $i$.

\begin{lemma}\label{lemK:eonenj}
	For any $j\geq 2$, we have $f(1)^{nj} \in I(\bar X)^{j + v(j!) - 1}$ and
	\begin{equation}\label{lemK:eonenj_eq1}
	f(1)^{nj}\equiv -j\cdot \big(\sum_{i\in I_{0}} f(i)g(n-i)\big)^{j-1}\cdot f(n) \mod I(\bar X)^{j+v(j!)}.
	\end{equation}
	Also, we have $e(1)^{nj}\equiv 0 \mod 2^{j+v(j!)-1}$ and
	\begin{equation*}
		e(1)^{nj}\equiv -j\cdot 2^{j-1} \big(\sum_{i\in I_{0}} e(i)e(n-i)\big)^{j-1}\cdot e(n) \mod 2^{j+v(j!)}.
	\end{equation*}
	In particular,
	\begin{equation*}
		\vk(f(1)^{\frac{n^{2}}{4}}),\, v(e(1)^{\frac{n^{2}}{4}})\geq \tfrac{n}{2}-2\,\, \text{ and }\,\, \vk(f(1)^{\frac{n^{2}}{4}-n}),\, v(e(1)^{\frac{n^{2}}{4}-n}) \geq \tfrac{n}{2}-1-v(n).
	\end{equation*}
\end{lemma}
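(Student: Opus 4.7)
The plan is to first establish a base-case identity
\[
f(1)^{n} = -f(n) + A + r, \qquad A := \sum_{i \in I_{0}} f(i)g(n-i) \in I(\bar X),\ r \in I(\bar X)^{2},
\]
and then raise it to the $j$-th power, exploiting the relation $f(n)^{2}=0$ from Corollary~\ref{ktheorysquaresckcommas}. For the base case, I would iterate the congruence $f(i)^{2}\equiv f(2i)\bmod I(\bar X)$ from~(\ref{krelationinbarmodi}) along the chain $1\to 2\to 4\to\cdots\to n/2$ (using that $n$ is a power of $2$) to obtain $f(1)^{n/2}\equiv f(n/2)\bmod I(\bar X)$; writing $f(1)^{n/2}=f(n/2)+s$ with $s\in I(\bar X)$ and squaring gives $f(1)^{n}\equiv f(n/2)^{2}\bmod I(\bar X)^{2}$, since both $2f(n/2)s$ and $s^{2}$ lie in $I(\bar X)^{2}$. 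Then~(\ref{krelationinbarlargeimodsquare}) applied at $i=n/2$, combined with the reindexing $i=n/2+k$ and the sign $(-1)^{n/2-1}=-1$ (as $n/2$ is even), produces the desired expression.

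Raising to the $j$-th power and using $f(n)^{2}=0$, the binomial expansion of $(-f(n)+A+r)^{j}$ collapses to
\[
f(1)^{nj} = (A+r)^{j} - j\, f(n)\,(A+r)^{j-1}.
\]
The heart of the argument is the refined valuation bound $\vk(A^{k})\ge k+v(k!)$ for every $k\ge 0$. Expanding $A^{k}$ by the multinomial theorem, a typical summand has the form $\binom{k}{m_{1},\ldots}\prod_{i}f(i)^{m_{i}}g(n-i)^{m_{i}}$ with $\sum m_{i}=k$: Lemma~\ref{lemK:eij} gives $f(i)^{m_{i}}\in I(\bar X)^{v(m_{i}!)}$, by definition $g(n-i)^{m_{i}}\in I(\bar X)^{m_{i}}$, and $v_{2}\binom{k}{m_{1},\ldots}=v(k!)-\sum v(m_{i}!)$; the three contributions sum to $k+v(k!)$. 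A parallel binomial analysis of $(A+r)^{j}-A^{j}$ and $(A+r)^{j-1}-A^{j-1}$, using $r^{m}\in I(\bar X)^{2m}$ and $m-v(m!)=s_{2}(m)\ge 1$ for $m\ge 1$, puts both errors, the second one after multiplication by $jf(n)$, inside $I(\bar X)^{j+v(j!)}$. Combined with $\vk(A^{j})\ge j+v(j!)$, this yields
\[
f(1)^{nj}\equiv -j\, A^{j-1}f(n)\bmod I(\bar X)^{j+v(j!)};
\]
applying the key bound to $jA^{j-1}$ together with $v(j)+v((j-1)!)=v(j!)$ gives $\vk(f(1)^{nj})\ge j+v(j!)-1$.

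For the Chow-ring version, apply $\psi$ from~(\ref{morphismpsi}) to the displayed congruence: $\psi(f(i))=e(i)$, $\psi(g(n-i))=2e(n-i)$ (from $\psi(t)=0$ and Lemma~\ref{Lem:top}), and $\psi(I(\bar X)^{k})\subset 2^{k}\CH(\bar X)$ translate the $I(\bar X)$-adic bounds into $2$-adic ones. The special values $j=n/4$ and $j=n/4-1$ then follow from Legendre's formula $v(m!)=m-s_{2}(m)$: for $n=2^{a}$ with $a\ge 3$ one has $v((2^{a-2})!)=2^{a-2}-1$ and $v((2^{a-2}-1)!)=2^{a-2}-a+1$, giving the stated bounds $n/2-2$ and $n/2-1-v(n)$. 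The main obstacle is the refined valuation estimate for $A^{k}$: the naive bound $\vk(A^{k})\ge k$ is too weak, and the upgrade $\vk(A^{k})\ge k+v(k!)$, which forces Lemma~\ref{lemK:eij} into the picture, is what makes all the error terms land in the correct power of $I(\bar X)$.
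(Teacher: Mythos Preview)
Your proposal is correct and follows essentially the same route as the paper: both establish $f(1)^{n}\equiv -f(n)+A\bmod I(\bar X)^{2}$ via (\ref{krelationinbarmodi}) and (\ref{krelationinbarlargeimodsquare}), use $f(n)^{2}=0$ to control the $j$-th power, and rely on the multinomial bound $\vk(A^{k})\ge k+v(k!)$ coming from Lemma~\ref{lemK:eij}. The only cosmetic difference is that you organize the expansion as a two-step binomial ($(-f(n)+B)^{j}$ followed by $B=A+r$), whereas the paper writes a single trinomial expansion of $(a+h-f(n))^{j}$; the resulting term-by-term estimates are identical.
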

\begin{proof}
	Let $h=\sum_{i\in I_{0}} f(i)g(n-i)$. For any $k\geq 1$, consider the multinomial expansion 
	$h^{k}=\sum C(k_{\frac{n}{2}+1}, \ldots, k_{n-1})$, where the sum runs over $(k_{\frac{n}{2}+1}, \ldots, k_{n-1})\in (\mathbb{N}\cup \{0\})^{\frac{n}{2}-1}$ with $\sum_{i\in I_{0}}k_{i}=k$ and
	\begin{equation}\label{eqk:bjmulti}
		C(k_{\frac{n}{2}+1}, \ldots, k_{n-1})={k \choose k_{\frac{n}{2}+1},\ldots, k_{n-1}}\prod_{i\in I_{0}}f(i)^{k_{i}}g(n-i)^{k_{i}}.
	\end{equation}
	Then, by Lemma \ref{lemK:eij}
	\begin{equation}\label{eqk:vbj}
		v({k \choose k_{\frac{n}{2}+1},\ldots, k_{n-1}})+\sum_{i\in I_{0}} \vk(f(i)^{k_{i}})\geq v(k!), \text{ thus } \vk(h^{k})\geq v(k!)+k.
	\end{equation}

	By (\ref{krelationinbarmodi}), 
	$f(1)^{\frac{n}{2}}\equiv f(\frac{n}{2}) \mod I(\bar X)$
	and by (\ref{krelationinbarlargeimodsquare}),
	$f(\frac{n}{2})^{2}\equiv h-f(n) \mod I(\bar X)^{2}$, thus
	\begin{equation*}
		f(1)^{n}\equiv h-f(n) \mod I(\bar X)^{2}.
	\end{equation*}
	Write $f(1)^{n}=a+h-f(n)$ for some $a\in I(\bar X)^{2}$. As $f(n)^{2}=0$, we have 
	\begin{equation}\label{eqk:e1nj}
		f(1)^{nj}=\sum_{k=0}^{j}{j \choose j-k, k}a^{j-k}h^{k} - 
		f(n)\sum_{k=0}^{j-1}{j \choose j-k-1, 1, k}a^{j-k-1}h^{k}.
	\end{equation}
	Since $j-k\geq v((j-k)!)$ for $j-k \geq 0$,
	it follows from (\ref{eqk:vbj}) that each summand of the first sum in (\ref{eqk:e1nj}) is contained in $I(\bar X)^{j+v(j!)}$. Similarly, as $j - k - 2 \geq v((j- k - 1)!)$ for $k < j-1$, 
	each summand of the second sum in (\ref{eqk:e1nj}) is contained in $I(\bar X)^{j+v(j!)}$ except for the last term, which completes the proof of the equation (\ref{lemK:eonenj_eq1}).

	After we get (\ref{lemK:eonenj_eq1}), it follows
	from
	(\ref{eqk:vbj}) with $k = j-1$ that 
	\begin{equation*}
	f(1)^{nj} \in I(\bar X)^{v(j) + v((j-1)!) + j - 1} =  I(\bar X)^{j + v(j!) - 1}.
	\end{equation*}
	
	The statements for $\CH(\bar X)$ are obtained from the statements for $\widetilde K(\bar X)$ by applying $\psi$ in (\ref{morphismpsi}). The last statement immediately follows from 
	\begin{equation}\label{eq:vnoverfour}
		v\big((\tfrac{n}{4})!\big)=\tfrac{n}{4}-1,\,\,	
		v\big((\tfrac{n}{4}-1)!\big)=\tfrac{n}{4}-1-v(\tfrac{n}{4})
		=\tfrac{n}{4}+1-v(n).\qedhere
	\end{equation}
\end{proof}

Now let us obtain an expression for the product of certain high powers of $f(1)$ and $f(I_3)f(\frac{n}{2})$ in $\widetilde{K}(\bar X)$, and a similar result in $\CH(\bar X)$. Roughly speaking, what we are going to do is to express (modulo powers of $I(\bar{X})$ and of 2) the powers of sums in right-hand sides of the formulas in Lemma \ref{lemK:eonenj} as square-free products of $f(i)$'s and $g(i)$'s and of $e(i)$'s, respectively.

\begin{prop}\label{propK:eoneeIthree}
	For any $n\geq 8$, we have $f(1)^{\frac{n^2}{4}-n} \cdot f(I_3) \cdot f(\tfrac{n}{2}) \in I(\bar X)^{\frac{n}{2}-v(n)-1}$ and
	\begin{equation*}
		f(1)^{\frac{n^{2}}{4}-n}\cdot f(I_{3})\cdot f(\tfrac{n}{2})\equiv 
		-(\tfrac{n}{4} - 1)!\cdot f([\tfrac{n}{2}, n])\cdot g([\tfrac{n}{4}+2, \tfrac{n}{2}-1])  \mod I(\bar X)^{\frac{n}{2}-v(n)}
	\end{equation*}
Also, we have $e(1)^{\frac{n^2}{4}-n} \cdot e(I_3) \cdot e(\tfrac{n}{2})\equiv 0 \mod 2^{\frac{n}{2}-v(n)-1}$ and 
		\begin{equation*}
			e(1)^{\frac{n^{2}}{4}-n}\cdot e(I_{3})\cdot e(\tfrac{n}{2})\equiv 
			-(\tfrac{n}{4} - 1)!\cdot 2^{\frac{n}{4}-2} e([\tfrac{n}{4}+2, n]) \mod 2^{\frac{n}{2}-v(n)}.
		\end{equation*}
\end{prop}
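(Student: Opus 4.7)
The plan is to apply Lemma~\ref{lemK:eonenj} with $j=\tfrac{n}{4}-1$, so that $nj=\tfrac{n^{2}}{4}-n$ and, using (\ref{eq:vnoverfour}), $j+v(j!)-1=\tfrac{n}{2}-v(n)-1$ and $j+v(j!)=\tfrac{n}{2}-v(n)$. The lemma yields $f(1)^{\frac{n^{2}}{4}-n}\in I(\bar X)^{\frac{n}{2}-v(n)-1}$, from which the first assertion follows after multiplying by $f(I_{3})\cdot f(\tfrac{n}{2})$, together with
\begin{equation*}
f(1)^{\frac{n^{2}}{4}-n}\equiv-(\tfrac{n}{4}-1)\cdot h^{\frac{n}{4}-2}\cdot f(n)\mod I(\bar X)^{\frac{n}{2}-v(n)},
\end{equation*}
where $h=\sum_{i\in I_{0}}f(i)g(n-i)$. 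Since $(\tfrac{n}{4}-1)\cdot(\tfrac{n}{4}-2)!=(\tfrac{n}{4}-1)!$, the main congruence reduces to
\begin{equation*}
h^{\frac{n}{4}-2}\cdot f(n)\cdot f(I_{3})\cdot f(\tfrac{n}{2})\equiv(\tfrac{n}{4}-2)!\cdot f([\tfrac{n}{2},n])\cdot g([\tfrac{n}{4}+2,\tfrac{n}{2}-1])\mod I(\bar X)^{\frac{n}{2}-v(n)}.
\end{equation*}

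Expand $h^{\frac{n}{4}-2}$ via the multinomial theorem as a sum over tuples $(k_{i})_{i\in I_{0}}$ with $\sum k_{i}=\tfrac{n}{4}-2$. The 2-adic valuation $v((\tfrac{n}{4}-2)!)-\sum_{i}v(k_{i}!)$ of the multinomial coefficient, combined with the lower bound $\vk(f(i)^{k_{i}}g(n-i)^{k_{i}})\geq v(k_{i}!)+k_{i}$ from Lemma~\ref{lemK:eij} and $g(n-i)\in I(\bar X)$, gives each summand baseline $I(\bar X)$-valuation $v((\tfrac{n}{4}-2)!)+(\tfrac{n}{4}-2)=\tfrac{n}{2}-1-v(n)$. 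Since $|I_{1}\cup I_{2}|=\tfrac{n}{4}-2$, there is exactly one \emph{square-free} tuple with $k_{i}=1$ for $i\in I_{1}\cup I_{2}$ and $k_{i}=0$ for $i\in I_{3}$; its multinomial coefficient is $(\tfrac{n}{4}-2)!$, and using $\{n-i:i\in I_{1}\cup I_{2}\}=[\tfrac{n}{4}+2,\tfrac{n}{2}-1]$ together with $\{\tfrac{n}{2},n\}\cup I_{0}=[\tfrac{n}{2},n]$, this summand produces precisely the desired right-hand side.

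The main obstacle is to show that every remaining tuple gains at least one additional unit of $I(\bar X)$-valuation after multiplication by $f(I_{3})\cdot f(n)\cdot f(\tfrac{n}{2})$. I would treat three cases. (i) If $k_{i}\geq 2$ for some $i\in I_{1}$, the first part of Corollary~\ref{lemK:eijIthree} places $f(i)^{k_{i}}g(n-i)^{k_{i}}f(\bar I_{3})$ directly in $I(\bar X)^{v(k_{i}!)+k_{i}+1}$. (ii) If $k_{j}\geq 1$ for some $j\in I_{3}$, the total power of $f(j)$ becomes $k_{j}+1\geq 2$, and Lemma~\ref{lemK:eij} applied to it forces a companion $f(m)$ with $m\in[j+1,n]\subset I_{3}\cup\{n\}$; combining with the remaining $f(I_{3})\cdot f(n)$ yields either $f(n)^{2}=0$ or $f(m)^{2}\in I(\bar X)$ via (\ref{krelationinbarmodi}) and $2m>n$, supplying the extra unit. (iii) In the remaining case with $k_{j}=0$ for all $j\in I_{3}$ and $k_{i}\geq 2$ for some $i\in I_{2}$, the second part of Corollary~\ref{lemK:eijIthree} rewrites $f(i)^{k_{i}}g(n-i)^{k_{i}}f(\bar I_{3})$ as a sum over multi-subsets $J\subset I_{1}\cup I_{2}$ of cardinality $k_{i}+1$; the total $f$-occupancy over $I_{1}\cup I_{2}$ is then $(k_{i}+1)+\sum_{l\in I_{1}\cup I_{2},\,l\neq i}k_{l}=\tfrac{n}{4}-1>|I_{1}\cup I_{2}|$, so pigeonhole produces some $f(m)^{2}$ with $m\in I_{1}\cup I_{2}$, and (\ref{krelationinbarmodi}) with $2m>n$ again supplies the needed $I(\bar X)$-factor.

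Finally, the Chow-ring statement follows by applying the morphism $\psi$ from (\ref{morphismpsi}), under which $f(i)\mapsto e(i)$, $g(i)\mapsto 2e(i)$, and $I(\bar X)^{k}$ lands in $2^{k}\CH(\bar X)$; these identifications translate the $K$-theoretic congruence into the claimed Chow-theoretic one.
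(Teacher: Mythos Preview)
Your approach is essentially the paper's own: apply Lemma~\ref{lemK:eonenj} with $j=\tfrac{n}{4}-1$, expand $h^{\frac{n}{4}-2}$ multinomially, isolate the square-free tuple, and kill every other tuple by a case analysis on where a repeated exponent occurs. Cases (i) and (ii) are handled correctly and match the paper almost verbatim.

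There is, however, a genuine gap in your case (iii). You rewrite only the single factor $f(i)^{k_i}g(n-i)^{k_i}f(\bar I_3)$ via Corollary~\ref{lemK:eijIthree} and leave the remaining $f(l)^{k_l}$ for $l\neq i$ untouched, then invoke pigeonhole on the total $f$-occupancy in $I_1\cup I_2$. But your baseline count already uses $\vk(f(l)^{k_l})\ge v(k_l!)$ for each $l\neq i$; the pigeonhole collision may come precisely from one of these factors (e.g.\ from $f(l)^{k_l}$ with $k_l\ge 2$), and then the ``extra'' $I(\bar X)$-factor you extract is not additional to the $v(k_l!)$ you already counted. Concretely, for $n=32$ take $k_{21}=k_{22}=3$ (and all other $k_l=0$): the multinomial coefficient contributes $v\binom{6}{3,3}=2$, Corollary~\ref{lemK:eijIthree} gives $a(J)\in I(\bar X)^{4}$, $g(10)^3\in I(\bar X)^3$, and your pigeonhole gives only $f(J)f(\bar I_3)f(22)^3\in I(\bar X)^1$, for a total of $10$; but you need $v(6!)+6+1=11$. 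The missing unit is exactly $v(3!)=1$ coming from $f(22)^3$, which your pigeonhole has consumed without replacing.

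The fix is exactly what the paper does in this case: before running the pigeonhole, rewrite \emph{every} $f(l)^{k_l}$ for $l\in I_1\cup I_2$ via Lemma~\ref{lemK:eij} (using the $I_2$-refinement where available), so that each $v(k_l!)$ is safely stored in a coefficient $a(J_l)\in I(\bar X)^{v(k_l!)}$. The pigeonhole then applies to the collection $\{J_l\}$ of multisets (whose total size is $\tfrac{n}{4}-1>|I_1\cup I_2|$), and the resulting extra $I(\bar X)$-factor is genuinely new. With this adjustment your argument goes through; for $n\le 16$ the issue is invisible because $|I_2|\le 0$ and case (iii) is vacuous.
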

\begin{proof}
	Let $k=\frac{n}{4}-2$. Consider a summand $C(k_{\frac{n}{2}+1}, \ldots, k_{n-1})$ of the multinomial expansion of $h^{k}$ as in (\ref{eqk:bjmulti}). We first show that 
	\begin{equation}\label{eqk:cknover}
		C(k_{\frac{n}{2}+1}, \ldots, k_{n-1})\cdot f(\bar{I}_{3})\equiv 0 \mod I(\bar X)^{v(k!)+k+1}
	\end{equation}
	for all $k_{\frac{n}{2}+1}, \ldots, k_{n-1}$ except for $k_{i}=\begin{cases} 1 & \text{ if } i\in I_{1}\cup I_{2},\\
		0 & \text{ if } i\in I_{3}.\end{cases}$

	If $k_{l}>0$ for some $l\in I_{3}$, then by Lemma \ref{lemK:eij} 
	\begin{equation*}
		f(l)^{k_{l}} f(\bar{I}_{3})=f(l)^{k_{l}+1}f(\bar{I}_{3}- \{l\})\equiv \sum_{J} a(J) f(J)f(\bar{I}_{3}- \{l\})
		\mod I(\bar X)^{v((k_{l}+1)!)+1},
	\end{equation*}
	where $|J| = k_{l} + 1$, $J \cap [l+1, n] \ne \emptyset$, and	
	$a(J)\in I(\bar X)^{v((k_{l}+1)!)}$. 
	Since $f(J)f(\bar{I}_{3}- \{l\})\equiv 0 \mod I(\bar X)$ by (\ref{krelationinbarmodi}), 
	we get $f(l)^{k_{l}}f(\bar{I}_{3})\equiv 0 \mod I(\bar X)^{v(k_{l}!)+1}$, thus again by Lemma \ref{lemK:eij}
	\begin{equation}\label{vkchoosevf}
	v({k \choose k_{\frac{n}{2}+1},\ldots, k_{n-1}})+\vk(f(l)^{k_{l}} f(\bar{I}_{3})) +\sum_{i\in I_{0} \setminus \{l\}} \vk(f(i)^{k_{i}})\geq v(k!)+1.
	\end{equation}
	Hence, 
	\begin{equation}\label{vkchoosevfg}
	v({k \choose k_{\frac{n}{2}+1},\ldots, k_{n-1}})+\vk(f(l)^{k_{l}} f(\bar{I}_{3}) g(l)^{k_{l}} ) +
	\sum_{i\in I_{0} \setminus \{l\}} \vk(f(i)^{k_{i}} g(i)^{k_{i}})\geq v(k!)+k+1,
	\end{equation}
	and the congruence (\ref{eqk:cknover}) holds. Therefore, we may assume that $k_{i}=0$ for all $i\in I_{3}$ and $\sum_{i\in I_{1}\cup I_{2}}k_{i}=k$.
	
	Similarly, if $k_{l}\geq 2$ for some $l\in I_{1}$, then by Corollary \ref{lemK:eijIthree} and Lemma \ref{lemK:eij}, we get (\ref{vkchoosevfg}), thus the congruence (\ref{eqk:cknover}) follows.

	Now, if $k_{l}\geq 2$ for some $l\in I_{2}$, then again by Lemma \ref{lemK:eij} and Corollary \ref{lemK:eijIthree}
	\begin{equation}\label{eqk:prodfsingleg}
		\big(\prod_{i\in I_{1}\cup I_{2}} f(i)^{k_{i}}\big)\cdot g(n-l)^{k_l}\cdot f(\bar{I}_{3})\equiv 
		f(\bar{I}_{3}) \prod_{i\in I_{1}\cup I_{2}} \sum_{J_{i}} a(J_i) f(J_{i}) \mod I(\bar X)^{s+1},
	\end{equation}
	where 
	$s=\sum_{i\in I_{1}\cup I_{2}} v(k_{i}!)+k_l$, $J_i \subset I_{1}\cup I_{2}$ or $J_i \cap \bar{I}_{3}\neq \emptyset$,
	\begin{equation*}
|J_{i}|=\begin{cases}
k_{i}&\text{ if } i\in (I_{1}\cup I_{2})\backslash\{l\},\\
k_{l}+1 & \text{ if } i=l,
\end{cases} \text{ and }a(J_{i})\in\begin{cases}
	 I(\bar X)^{v(k_i!)} &\text{ if } i\in (I_{1}\cup I_{2})\backslash\{l\},\\
	 I(\bar X)^{v(k_l!)+k_l} & \text{ if } i=l.
	\end{cases}
	\end{equation*}

	Since for each $k$-tuple $(J_i)_{i \in I_i \cup I_2}$
	\begin{equation*}
	\sum_{i\in I_{1}\cup I_{2}} |J_{i}|=1+\sum_{i\in I_{1}\cup I_{2}} k_{i}> |I_{1}\cup I_{2}|=k,
	\end{equation*}
	by (\ref{krelationinbarmodi})
	we obtain
	\begin{equation*}
		f(\bar{I}_{3}) \prod_{i\in I_{1}\cup I_{2}} f(J_{i}) \in I(\bar X).
	\end{equation*}
	Thus, the product of the sums on the right-hand side of (\ref{eqk:prodfsingleg}) belongs to $I(\bar X)^{s+1}$. As
		\begin{equation*}
			\prod_{i \in (I_{1}\cup I_{2}) \setminus \{l\}} g(n-i)^{k_i} \in I(\bar X)^{k - k_l},
		\end{equation*}
	the congruence (\ref{eqk:cknover}) follows. Therefore,
	\begin{equation*}
		h^{k}\cdot f(\bar{I}_{3}) \cdot f(\tfrac{n}{2})\equiv k!\cdot f([\tfrac{n}{2}, n])\cdot g([\tfrac{n}{4}+2, \tfrac{n}{2}-1])  \mod I(\bar X)^{v(k!)+k+1}
	\end{equation*}
    Hence, the second equation in the statement follows from 
    Lemma \ref{lemK:eonenj} and (\ref{eq:vnoverfour}) with the equality $v\big((\tfrac{n}{4}-2)!\big)=v\big((\tfrac{n}{4}-1)!\big)$. 
    Since 
    \begin{equation*}
    |[\tfrac{n}{4}+2, \tfrac{n}{2}-1]| = \tfrac{n}{4}-2 \text{ and }
    g([\tfrac{n}{4}+2, \tfrac{n}{2}-1]) \in 
    I(\bar X)^{\frac{n}{4}-2},
    \end{equation*}
    the first equation in the statement follows from the second equation.
	
The remaining equations in the statement follow from the first and second equations by applying $\psi$ in (\ref{morphismpsi}) together with 
(\ref{imageunderphibar}).
\end{proof}

As a corollary, we also obtain an expression for the product of $f(1)^{\frac{n^{2}}{4}-n}$ and $g(I_3)f(\tfrac{n}{2})$.

\begin{cor}\label{cor:fonepower}
For any $n\geq 8$, we obtain
$f(1)^{\frac{n^{2}}{4}-n}\cdot g(I_{3}) \in I(\bar X)^{\frac{3n}{4}-v(n)}$ and 
\begin{equation*}
f(1)^{\frac{n^{2}}{4}-n}\cdot g(I_{3})\cdot f(\tfrac{n}{2})\equiv 
2^{\frac{n}{2}-v(n)+2}\cdot f([\tfrac{n}{2}, n])\cdot g([\tfrac{n}{4}+2, \tfrac{n}{2}-1])  \mod I(\bar X)^{\frac{3n}{4}-v(n)+1}.
\end{equation*}	
\end{cor}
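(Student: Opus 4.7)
The plan is to prove both assertions by combining the known expression from Proposition \ref{propK:eoneeIthree} (the analogue with $f(I_3)$ in place of $g(I_3)$) with an expansion of $g(I_3)$ into its constituent terms. The first assertion is essentially a weight count: applying Lemma \ref{lemK:eonenj} with $j=\tfrac{n}{4}-1$, and using $v((\tfrac{n}{4}-1)!)=\tfrac{n}{4}+1-v(n)$ from (\ref{eq:vnoverfour}), gives $f(1)^{\frac{n^{2}}{4}-n}\in I(\bar X)^{\frac{n}{2}-1-v(n)}$. Since every $g(i)\in I(\bar X)$ and $|I_{3}|=\tfrac{n}{4}+1$, we have $g(I_{3})\in I(\bar X)^{\frac{n}{4}+1}$, and the first inclusion follows by multiplication.

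For the congruence, I would expand
\begin{equation*}
g(I_{3})=\sum_{S\subseteq I_{3}} (-1)^{|I_{3}|-|S|}\,2^{|S|}\,t^{|I_{3}|-|S|}\,f(S)\,f(T_{S}),
\qquad T_{S}=\{i+1:i\in I_{3}\setminus S\},
\end{equation*}
and treat the principal term $S=I_3$ separately from the rest. The principal contribution is $2^{\tfrac{n}{4}+1}f(1)^{\frac{n^{2}}{4}-n}f(I_{3})f(\tfrac{n}{2})$; by Proposition \ref{propK:eoneeIthree} this equals $-2^{\frac{n}{4}+1}(\tfrac{n}{4}-1)!\,f([\tfrac{n}{2},n])\,g([\tfrac{n}{4}+2,\tfrac{n}{2}-1])$ modulo $2^{\frac{n}{4}+1}I(\bar X)^{\frac{n}{2}-v(n)}=I(\bar X)^{\frac{3n}{4}-v(n)+1}$. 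Since $v(2^{\frac{n}{4}+1}(\tfrac{n}{4}-1)!)=\tfrac{n}{2}+2-v(n)$ and the odd part $m$ of $(\tfrac{n}{4}-1)!$ satisfies $-m\equiv 1\pmod 2$, writing $-m=1+2k$ and noting $f([\tfrac{n}{2},n])g([\tfrac{n}{4}+2,\tfrac{n}{2}-1])\in I(\bar X)^{\frac{n}{4}-2}$ turns the coefficient into the desired $2^{\frac{n}{2}-v(n)+2}$ modulo $I(\bar X)^{\frac{3n}{4}-v(n)+1}$.

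The main obstacle is to show that every non-principal term ($S\subsetneq I_{3}$) is already in $I(\bar X)^{\frac{3n}{4}-v(n)+1}$. I would split into two cases. If $n-1\notin S$, then $n\in T_{S}$, so each such summand $B_{S}$ has $f(n)$ as a factor. Using the Lemma \ref{lemK:eonenj} approximation $f(1)^{\frac{n^{2}}{4}-n}\equiv -(\tfrac{n}{4}-1)h^{\frac{n}{4}-2}f(n)\pmod{I(\bar X)^{\frac{n}{2}-v(n)}}$ and the relation $f(n)^{2}=0$, the main part of $f(1)^{\frac{n^{2}}{4}-n}\cdot B_{S}\cdot f(\tfrac{n}{2})$ vanishes, and the error term lies in $I(\bar X)^{(\frac{n}{2}-v(n))+(\frac{n}{4}+1)}=I(\bar X)^{\frac{3n}{4}-v(n)+1}$.

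The case $n-1\in S$ requires a short combinatorial argument: starting from any $i_{0}\in I_{3}\setminus S$ (which must exist and satisfy $i_{0}\le n-2$), the successors $i_{0},i_{0}+1,\ldots$ stay inside $I_{3}$ and cannot all lie in $I_{3}\setminus S$ because $n-1\in S$, so some $i\in I_{3}\setminus S$ has $i+1\in S$. Then $i+1\in S\cap T_{S}\subseteq [\tfrac{6n}{8},n-1]$, hence $2(i+1)>n$, so $f(i+1)^{2}\equiv f(2i+2)=0\pmod{I(\bar X)}$ by (\ref{krelationinbarmodi}); thus $f(S)f(T_{S})\in I(\bar X)$, making $B_{S}\in I(\bar X)^{\frac{n}{4}+2}$ and the product in $I(\bar X)^{\frac{3n}{4}-v(n)+1}$. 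Combining all three inputs completes the proof.
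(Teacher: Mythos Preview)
Your argument is correct and follows essentially the same route as the paper: both reduce to showing that $f(1)^{\frac{n^{2}}{4}-n}g(I_{3})\equiv 2^{\frac{n}{4}+1}f(1)^{\frac{n^{2}}{4}-n}f(I_{3})$ modulo $I(\bar X)^{\frac{3n}{4}-v(n)+1}$ and then invoke Proposition~\ref{propK:eoneeIthree} together with (\ref{eq:vnoverfour}). The only organizational difference is that the paper first peels off the factor $g(n-1)=2f(n-1)-tf(n)$ and treats the $tf(n)$ piece via $f(n)\equiv f(1)^{n}\pmod{I(\bar X)}$ and Lemma~\ref{lemK:eonenj}, after which the expansion of $g(I_{3}\setminus\{n-1\})f(n-1)$ lives entirely in multi-subsets of $I_{3}$, so a straight pigeonhole (any multi-subset of $I_{3}$ of size $|I_{3}|$ with a repeat satisfies $f(j)^{2}\equiv 0$) finishes; your full expansion with the case split on whether $n-1\in S$ achieves the same thing, and your use of $f(n)^{2}=0$ in Case~1 is a clean alternative to the paper's $f(n)\equiv f(1)^{n}$ maneuver.
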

\begin{proof}
The first statement follows immediately from the last statement of Lemma \ref{lemK:eonenj}. For the second statement,
we show that
\begin{equation}\label{eq:fonensquare}
f(1)^{\frac{n^{2}}{4}-n} g(I_{3})\equiv f(1)^{\frac{n^{2}}{4}-n}\cdot 2^{\frac{n}{4}+1}\cdot f(I_{3}) \mod I(\bar X)^{\frac{3n}{4}-v(n)+1}.
\end{equation}
Then, the second statement immediately follows by Proposition \ref{propK:eoneeIthree} and (\ref{eq:vnoverfour}).

Write the left-hand side of the equation (\ref{eq:fonensquare}) as 
\begin{equation*}
f(1)^{\frac{n^{2}}{4}-n} g(I_{3})=f(1)^{\frac{n^{2}}{4}-n}\cdot  g(I_{3}\backslash \{n-1\})\cdot \big( 2f(n-1)-tf(n)\big).
\end{equation*}
Since $f(n)\equiv f(1)^{n} \mod I(\bar{X})$ by (\ref{krelationinbarmodi}), 
it follows by Lemma \ref{lemK:eonenj} that
\begin{equation*}
f(1)^{\frac{n^{2}}{4}-n}f(n)\equiv 0 \mod I(\bar{X})^{\frac{n}{2}-v(n)}.
\end{equation*}
As $|I_{3}|=\tfrac{n}{4}+1$, we have $t\cdot g(I_{3}\backslash \{n-1\})\in I(\bar{X})^{\frac{n}{4}+1}$, thus
\begin{equation*}
f(1)^{\frac{n^{2}}{4}-n}\cdot g(I_{3})\equiv 2f(1)^{\frac{n^{2}}{4}-n}\cdot g(I_{3}\backslash \{n-1\})f(n-1)  \mod I(\bar X)^{\frac{3n}{4}-v(n)+1}.
\end{equation*}

Let us expand the term $g(I_{3}\backslash \{n-1\})f(n-1)$. Then, each summand has a factor of the form $f(J)$ for some multi-subset $J\subset I_{3}$ with $|J|=|I_{3}|$. Since $f(j)^{2}\equiv 0 \mod I(\bar{X})$ for any $j\in I_{3}$, 
\begin{equation*}
	g(I_{3}\backslash \{n-1\})f(n-1)\equiv 2^{\frac{n}{4}}\cdot f(I_{3}) \mod I(\bar{X})^{\frac{n}{4}+1},
\end{equation*}
whence the equation (\ref{eq:fonensquare}) follows.\end{proof}

We shall need the following lemma in the proofs of Propositions \ref{lemK:indXplus1} and \ref{lem:indXplus1}.

	\begin{lemma}\label{lemK:ijmultisubsetnew}
		Let $J$ be a multi-subset of $[1, n]$ satisfying the following conditions:
		\begin{itemize}
		\item[($\ast$)] There exists a number $k \in J$ with multiplicity at least $2$ and every number $j$ with $k < j \le n$ is contained in $J$ with multiplicity $1$.
		\end{itemize}
	Then, we have
		\begin{equation}\label{eq:lemK:ijmultisubsetnew}
			f(J) \equiv 0 \mod I(\bar X) \,\,\text{ and }\,\, e(J) \equiv 0 \mod 2.
		\end{equation}
	\end{lemma}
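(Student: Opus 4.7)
The plan is to prove both congruences simultaneously by induction on $n - k$, using the basic relation $f(i)^{2} \equiv f(2i) \mod I(\bar{X})$ from (\ref{krelationinbarmodi}) for the first congruence and the analogous relation $e(i)^{2} \equiv e(2i) \mod 2$ (which follows immediately from (\ref{relationinbar})) for the second, together with the conventions $f(a) = 0$ and $e(a) = 0$ whenever $a > n$. Since the two arguments are formally identical, I describe the proof for $f(J)$ in detail and indicate that the same scheme works verbatim for $e(J)$.

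The base case is $k = n$: here $n$ appears in $J$ with multiplicity at least $2$, so $f(J)$ is divisible by $f(n)^{2}$, which is congruent to $f(2n) = 0$ modulo $I(\bar{X})$. For the inductive step assume $k < n$. If $2k > n$, then again $f(k)^{2} \equiv f(2k) = 0 \mod I(\bar{X})$ kills $f(J)$. Otherwise $2k \leq n$, so by hypothesis ($\ast$) the element $2k$ lies in $J$ with multiplicity exactly $1$. Let $J^{*}$ be the multi-subset obtained from $J$ by removing two copies of $k$ and adding one additional copy of $2k$; then $2k$ has multiplicity $2$ in $J^{*}$, while the multiplicities of all other elements are inherited from $J$. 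Using $f(k)^{2} \equiv f(2k) \mod I(\bar{X})$ to rewrite one pair of the $f(k)$-factors in $f(J)$ yields $f(J) \equiv f(J^{*}) \mod I(\bar{X})$.

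It remains to observe that $J^{*}$ satisfies ($\ast$) with $2k$ playing the role of $k$: the multiplicity of $2k$ in $J^{*}$ is $2$, and every $j$ with $2k < j \leq n$ has multiplicity $1$ in $J^{*}$, because such $j$ already had multiplicity $1$ in $J$ by ($\ast$) and was untouched by our modification. Since $n - 2k < n - k$, the induction hypothesis applies and gives $f(J^{*}) \equiv 0 \mod I(\bar{X})$; hence the first congruence of (\ref{eq:lemK:ijmultisubsetnew}) holds. Repeating the argument verbatim with $e$ in place of $f$ and $2$ in place of $I(\bar{X})$ yields the second congruence. The only point requiring attention is the verification that the hypothesis propagates from $J$ to $J^{*}$ — an entirely routine bookkeeping step made possible precisely by the strong form of ($\ast$), which demands multiplicity exactly $1$ on all of $(k, n]$, not merely absence of repetition.
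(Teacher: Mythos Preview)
Your argument is correct and follows the same inductive scheme as the paper: decreasing induction on $k$ (equivalently, your induction on $n-k$), using $f(k)^2 \equiv f(2k) \bmod I(\bar X)$ to pass from $J$ to $J^{*}=J-\{k,k\}+\{2k\}$ and verifying that $J^{*}$ again satisfies $(\ast)$ with $2k$ in place of $k$. The only minor deviation is that the paper does not rerun the argument for $e(J)$; instead it observes that the second congruence follows from the first by applying the surjection $\psi\colon \widetilde K(\bar X)\to \CH(\bar X)$ of (\ref{morphismpsi}) together with (\ref{imageunderphibar}), which sends $f(i)$ to $e(i)$ and $I(\bar X)$ into $2\CH(\bar X)$.
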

	\begin{proof}
Let $k\in J$ denote a number with multiplicity at least $2$ as in $(\ast)$. We prove by decreasing induction on $k$. If $2k>n$, then the first equation in (\ref{eq:lemK:ijmultisubsetnew}) follows directly from 
(\ref{krelationinbarmodi}) since $f(2k)= 0$. Otherwise, by (\ref{krelationinbarmodi})
again, we have $f(J) \equiv f(J') \mod I(\bar X)$, where $J' = J+\{2k\}-\{k,k\}$. Since $2k\in J'$ has multiplicity $2$ and every $j$ with $2k<j\leq n$ is contained in $J'$, it follows by the induction that $f(J')\equiv 0 \mod I(\bar{X})$, whence the first equation follows. The second equation in (\ref{eq:lemK:ijmultisubsetnew}) follows from the first one by applying $\psi$ in (\ref{morphismpsi}) together with 
(\ref{imageunderphibar}).
	\end{proof}

We denote
\begin{equation*}
I_{4}=[6, \tfrac{n }{4}+1]\backslash \{2^{i}\,|\, 3\leq i\leq v(n)-2\}.
\end{equation*}

Now we will prove the main result of this section, which plays a key role in the proof of $2$-divisibility  of $\phi(x)$ (see Proposition \ref{prop:GK}).
For $n=8$, an analogue of the following proposition is proved inside the proof of \cite[Proposition 4.4]{Kar2020} 
(see Remark \ref{lemK:indXplus1n8} below).

\begin{prop}\label{lemK:indXplus1}
	Let $n\geq 16$. Then, the following equations hold modulo $I(\bar X)^{v(\ind X)+1}$.
\begin{enumerate}
	\item[($a$)] 	$f(1)^{\frac{n^{2}}{4}-1}\cdot g(I_{3}\cup I_{4}\cup \{2,3\}) 
	\equiv 
	0$
	
	\smallskip

	\item[($b$)] 	$f(1)^{\frac{n^{2}}{4}-2}\cdot g(I_{3}\cup I_{4}\cup \{2,4\})\equiv 2^{v(\ind X)-2} t^2\cdot  f([2,n])$.
\end{enumerate}
\end{prop}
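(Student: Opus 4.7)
The plan is to decompose $f(1)^{\frac{n^{2}}{4}-j}$ for $j\in\{1,2\}$ as $f(1)^{\frac{n^{2}}{4}-n}\cdot f(1)^{n-j}$, apply Corollary~\ref{cor:fonepower} to the first factor together with a cofactor $f(n/2)$ extracted from the second, and then reduce the remaining expression to a combinatorial expansion controlled by Lemma~\ref{lemK:ijmultisubsetnew}. Since $n$ is a power of $2$, iterating $f(i)^{2}\equiv f(2i)\mod I(\bar X)$ from (\ref{krelationinbarmodi}) gives $f(1)^{n/2}\equiv f(n/2)\mod I(\bar X)$, and combining with the binary expansion of $n/2-j$ yields $f(1)^{n-j}\equiv f(n/2)\cdot f(T_{j})\mod I(\bar X)$, where $T_{1}=\{1,2,4,\ldots,n/4\}$ and $T_{2}=\{2,4,\ldots,n/4\}$. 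After substituting Corollary~\ref{cor:fonepower} into $f(1)^{\frac{n^{2}}{4}-n}\cdot g(I_{3})\cdot f(n/2)$, both left-hand sides reduce, modulo $I(\bar X)^{v(\ind X)+1}$, to
\begin{equation*}
2^{\frac{n}{2}-v(n)+2}\cdot f([\tfrac{n}{2},n])\cdot g([\tfrac{n}{4}+2,\tfrac{n}{2}-1])\cdot f(T_{j})\cdot g(I_{4}\cup S_{j}),
\end{equation*}
where $S_{1}=\{2,3\}$ and $S_{2}=\{2,4\}$; the various errors are absorbed because the residual factor $g(I_{4}\cup S_{j})$ lies in $I(\bar X)^{|I_{4}|+2}$.

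Next, I would expand every remaining $g(i)=2f(i)-tf(i+1)$, parametrizing the $-tf(i+1)$-choices by subsets $A\subset I_{4}\cup S_{j}$ and $B\subset [\tfrac{n}{4}+2,\tfrac{n}{2}-1]$. A short calculation shows that each $(A,B)$-summand equals $(-1)^{|A|+|B|}\cdot 2^{v(\ind X)-|A|-|B|}\cdot t^{|A|+|B|}\cdot f(J_{A,B}^{(j)})$, where $J_{A,B}^{(j)}$ is the multiset assembled from $[\tfrac{n}{2},n]$, $T_{j}$, and the chosen branches of the two $g$-expansions. The scalar already sits in $I(\bar X)^{v(\ind X)}$, so a summand survives modulo $I(\bar X)^{v(\ind X)+1}$ only if $f(J_{A,B}^{(j)})\not\equiv 0\mod I(\bar X)$. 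For (b), one verifies by direct enumeration that $A=\{2,4\}$, $B=\emptyset$ is the unique choice yielding $J=[2,n]$: the branches $-tf(3)$ in $g(2)$ and $-tf(5)$ in $g(4)$ exactly patch the indices $3$ and $5$ missing from $T_{2}\cup[\tfrac{n}{4}+2,n]$, producing the target $2^{v(\ind X)-2}\,t^{2}\,f([2,n])$; every other $(A,B)$ gives a multiset whose repeated element satisfies the hypothesis of Lemma~\ref{lemK:ijmultisubsetnew}, hence vanishes. For (a), $|J_{A,B}^{(1)}|=n$ while $J_{A,B}^{(1)}\subset [1,n]$, forcing a repetition; a similar case analysis shows Lemma~\ref{lemK:ijmultisubsetnew} applies (possibly after iterating $f(i)^{2}\equiv f(2i)$ to push the repeated small index up), so every term vanishes.

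The main obstacle is precisely this last combinatorial analysis: a given multiset $J_{A,B}^{(j)}$ may have a small index (typically $2$, $3$, or $4$) with multiplicity $\geq 2$ while some intermediate index (most often $5$) is absent, and then Lemma~\ref{lemK:ijmultisubsetnew} does not apply as stated. In these cases one must invoke the refined relation (\ref{krelationinbar}), whose correction terms $tf(2i+1)$ and $\sum f(i+k)g(i-k)$ either propagate the repetition up the chain $2\mapsto 4\mapsto 8\mapsto\cdots$ (eventually using $f(n)^{2}=0$) until the hypothesis of Lemma~\ref{lemK:ijmultisubsetnew} is restored, or contribute a $tf(2i+1)$-term that raises the $I(\bar X)$-order of the summand above $v(\ind X)$. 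Carrying out this bookkeeping exhaustively, and tracking the signs and exact $2$- and $t$-exponents of the distinguished surviving configuration in (b), is where the proof is most delicate.
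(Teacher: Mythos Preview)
Your overall strategy matches the paper's: split off $f(1)^{n-j}$, write it via binary expansion as $f(\tfrac{n}{2})\prod f(2^k)$, apply Corollary~\ref{cor:fonepower}, then expand the remaining $g$'s and invoke Lemma~\ref{lemK:ijmultisubsetnew}. Your scalar bookkeeping (each summand carrying $\pm 2^{v(\ind X)-|A|-|B|}t^{|A|+|B|}$) is correct, and your identification of the unique surviving configuration $A=\{2,4\}$, $B=\emptyset$ in (b) is right.

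The difference, and the source of the ``obstacle'' you flag, is the order of operations. You expand \emph{all} $g$'s simultaneously; the paper instead isolates the two small factors $g(\{2,3\})$ (resp.\ $g(\{2,4\})$) and simplifies them first against the fixed block $f([\tfrac{n}{2},n])\prod_{k=1}^{v(n)-2}f(2^k)$, using the observation (second equation in (\ref{eq:fonenequiv}))
\[
f(2^i)\cdot f([\tfrac{n}{2},n])\cdot\prod_{k=1}^{v(n)-2}f(2^k)\equiv 0 \mod I(\bar X)\qquad(1\le i\le v(n)).
\]
This is nothing more than iterating $f(2^i)^2\equiv f(2^{i+1})$ via (\ref{krelationinbarmodi}) until $f(n)^2=0$ --- exactly your ``propagate up the chain'' idea, done once up front. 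It kills every branch of $g(2)g(3)$ or $g(2)g(4)$ containing a factor $f(2)$ or $f(4)$, leaving only $t^2f(3)^2\equiv t^2f(6)$ for (a) and $t^2f(3)f(5)$ for (b). After this reduction the multisets arising from the \emph{remaining} $g$'s are contained in $\{2,4\}\cup[6,n]$ (resp.\ $[2,n]$), and a short pigeonhole check shows they do satisfy $(\ast)$; Lemma~\ref{lemK:ijmultisubsetnew} then applies with no residual casework.

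Two remarks on your write-up. First, your assertion for (b) that ``every other $(A,B)$ satisfies the hypothesis of Lemma~\ref{lemK:ijmultisubsetnew}'' is false as stated: if $2\notin A$ then $2$ is repeated while $3$ is absent, so $(\ast)$ fails --- you implicitly acknowledge this in the next paragraph. Second, you do not need the refined relation (\ref{krelationinbar}) to fix such cases: since each summand already carries a coefficient in $I(\bar X)^{v(\ind X)}$, you only need $f(J)\equiv 0\mod I(\bar X)$, and for that the simple (\ref{krelationinbarmodi}) suffices. The paper's pre-simplification just packages this iteration once rather than scattering it over many $(A,B)$.
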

\begin{proof}
By (\ref{krelationinbarmodi}),
$f(1)^{m}\equiv f(m) \mod I(\bar{X})$ for any $2$-power integer $m$, thus 
\begin{equation}\label{eq:fonenequiv}
f(1)^{n-2}\equiv \prod_{k=1}^{v(n)-1}f(2^{k}) \,\,\text{ and }\,\, f(2^{i})f([\tfrac{n}{2}, n]) \prod_{k=1}^{v(n)-2}f(2^{k})\equiv 0 \mod I(\bar{X})
\end{equation}
for any $1\leq i\leq v(n)$. Since
\begin{equation}\label{eq:frac3n4}
	\tfrac{3n}{4}-v(n)+1 +|I_{4}| = n-2v(n)+1,
\end{equation}by Corollary \ref{cor:fonepower} and the first equation in (\ref{eq:fonenequiv}), the following equation holds modulo 
$I(\bar X)^{n-2v(n)+1}$: 
\begin{equation*}
	f(1)^{\frac{n^{2}}{4}-2}g(I_{3}\cup I_{4})\equiv 2^{\frac{n}{2}-v(n)+2}\cdot g([\tfrac{n}{4}+2, \tfrac{n}{2}-1]\cup I_{4}) f([\tfrac{n}{2}, n]) \prod_{k=1}^{v(n)-2}f(2^{k}).
\end{equation*}
Let $A$ denote the right hand side of the preceding equation without the factor $2^{\frac{n}{2}-v(n)+2}$. Thus, to prove $(a)$ and $(b)$, by (\ref{torsionindex}) it suffices to show the following congruences modulo $I(\bar X)^{\frac{n}{2}-v(n)+1}$:
\begin{align*}
\big(2f(2)-tf(3)\big)\big(2f(3)-tf(4)\big)A&\equiv 0,\\ \big(2f(2)-tf(3)\big)\big(2f(4)-tf(5)\big)A&\equiv 2^{\frac{n}{2}-v(n)-2} t^2 f([2,n]),
\end{align*}
respectively.

Since $|[\tfrac{n}{4}+2, \tfrac{n}{2}-1]|=\tfrac{n}{4}-2$ and $|I_{4}|=\tfrac{n}{4}-v(n)$, we have
\begin{equation}\label{gnoverfour}
	g([\tfrac{n}{4}+2, \tfrac{n}{2}-1]\cup I_{4})\in I(\bar X)^{\frac{n}{2}-v(n)-2},	
\end{equation}
thus by the second equation in (\ref{eq:fonenequiv}) and $f(3)^{2}\equiv f(6) \mod I(\bar{X})$ it is enough to show that the following hold modulo 
$I(\bar X)^{\frac{n}{2}-v(n)-1}$:
\begin{equation*}
		f(6)\cdot A\equiv 0 \,\,\text{ and }\,\,f(\{3,5\})\cdot A\equiv 2^{\frac{n}{2}-v(n)-2} f([2,n]),
\end{equation*}
respectively. By (\ref{gnoverfour}), 
the left-hand sides of these congruences
can be expanded as 
\begin{equation*}
	\sum_{J}a(J)f(J)\,\, \text{ and }\,\, 2^{\frac{n}{2}-v(n)-2} f([2,n])+\sum_{L\neq [2,n]}b(L)f(L),
\end{equation*}
respectively, where 
$a(J),\, b(L)\in I(\bar X)^{\frac{n}{2}-v(n)-2}$, 
$J$ denotes a multi-subset of $\{2,4\}\cup [6,n]$, and $L$ denotes a multi-subset of $[2,n]$. Since each of $J$ and $L$
satisfies the condition $(\ast)$ in Lemma \ref{lemK:ijmultisubsetnew}, the statement follows.
\end{proof}
\begin{rem}\label{lemK:indXplus1n8}
For $n=8$, the congruences $(a)$ and $(b)$ in Proposition \ref{lemK:indXplus1} still hold if $I_{3}=[5,7]$ is replaced by $I_{3}'=[6,7]$, i.e.,
\begin{align}
f(1)^{15}\cdot g([6,7]\cup \{2,3\})& \equiv 
0 \mod I(\bar{X})^{5},\label{eq:fonefifteen1}\\ 
f(1)^{14}\cdot g([6,7]\cup \{2,4\})& \equiv 
2^{2}t^{2}f([2,8]) \mod I(\bar{X})^{5}.\label{eq:fonefifteen2}
\end{align}

Indeed, since $f(1)^{8}\equiv f(8)$ and $f(8)^{2}\equiv f(7)^{2}\equiv 0 \mod I(\bar{X})$, 
\begin{equation*}
f(1)^{8}g([6,7])\equiv 2^{2}f([6,8]) \mod I(\bar{X})^{3}.
\end{equation*}
Hence, the congruences (\ref{eq:fonefifteen1}) and (\ref{eq:fonefifteen2}) follow from 
\begin{equation*}
	f(2)^2 f(4) f(8) \equiv f(4)^2 f(8) \equiv f(8)^2 \equiv f(3)^2 f(6) \equiv f(6)^2 \equiv 0 \mod I(\bar X).
\end{equation*}
\end{rem}

\smallskip

Below we provide analogues of Corollary \ref{cor:fonepower} and Proposition \ref{lemK:indXplus1}(a) in $\CH(\bar{X})$. In the analogue (Proposition \ref{lem:indXplus1}) of Proposition \ref{lemK:indXplus1}(a), the element of $\CH(\bar X)$ is not divisible by the same power of $2$ as the power of 
$I(\bar X)$ in Proposition \ref{lemK:indXplus1}(a) itself anymore. This difference will enable us to prove that the element $x$ in (\ref{eq:elementx}) is not divisible by $2$ in $\CH(X)$. For each $i \in [1,n]$, we denote 
\begin{equation}\label{eq:steenrodshortenedformula}
	\widehat{S}^j(i) = \binom{i-1}{j} c(i+j),
	\quad
	\widehat{S}(i) = \sum_{j=0}^{i-1} \binom{i-1}{j} c (i+j).
\end{equation}
In other words, $\widehat{S}(i)$ is an integral representative of the sub-linear combination of $S(\bar{c}(i))$ that consists of multiples of single $c(j)$s only, not of their products.
For a subset $L \subset [1,n]$,
denote $\widehat{S}(L) = \prod_{l \in L} \widehat{S}(l)$.

\begin{cor}\label{lem:eonepower}
	For any $n\geq 8$, 
	we have $e(1)^{\frac{n^{2}}{4}-n}\cdot \res(\widehat{S}(I_{3}))\equiv 0 \mod 2^{\frac{3n}{4}-v(n)}$ and 
	\begin{equation*}
		e(1)^{\frac{n^{2}}{4}-n}\cdot \res(\widehat{S}(I_{3}))\cdot e(\tfrac{n}{2})\equiv 
		2^{\frac{3n}{4}-v(n)}\cdot e([\tfrac{n}{4}+2, n])  \mod 2^{\frac{3n}{4}-v(n)+1}.
	\end{equation*}	
\end{cor}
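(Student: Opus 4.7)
The plan is to handle the two statements separately. The divisibility claim will follow immediately by combining already-established 2-adic bounds, while the congruence requires expanding $\res(\widehat S(I_3))$ and carrying out a three-case analysis showing only the ``constant'' term survives modulo $2^{3n/4-v(n)+1}$.

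For the first claim, each factor $\res(\widehat S(l)) = 2\sum_{j=0}^{n-l}\binom{l-1}{j}e(l+j)$ with $l \in I_3$ is divisible by $2$, so $\res(\widehat S(I_3))$ is divisible by $2^{|I_3|} = 2^{n/4+1}$; multiplying with the bound $e(1)^{n^2/4-n} \in 2^{n/2-v(n)-1}\CH(\bar X)$ from Lemma \ref{lemK:eonenj} produces the asserted divisibility by $2^{3n/4-v(n)}$.

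For the second claim I would expand $\res(\widehat S(I_3))$ as $2^{n/4+1}\sum_{(j_l)}\bigl(\prod_l \binom{l-1}{j_l}\bigr)\prod_l e(l+j_l)$ and split the tuples $(j_l)_{l \in I_3}$ into three cases. When all $j_l = 0$, the contribution is $2^{n/4+1}\, e(1)^{n^2/4-n}\, e(I_3)\, e(n/2)$, which by Proposition \ref{propK:eoneeIthree} is congruent to $-(n/4-1)!\, 2^{n/2-1} e([n/4+2,n])$; since $(n/4-1)!$ has 2-adic valuation $n/4+1-v(n)$, this is an odd multiple of $2^{3n/4-v(n)}$ times $e([n/4+2,n])$, hence congruent to $2^{3n/4-v(n)} e([n/4+2,n])$ modulo $2^{3n/4-v(n)+1}$. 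When some $l + j_l = n$, the factor $e(n)$ appearing in the product, combined with the Lemma \ref{lemK:eonenj} formula
\[
e(1)^{n^2/4-n} \equiv -(n/4-1)\, 2^{n/4-2}\, \bigl(\textstyle\sum_{i\in I_0} e(i)e(n-i)\bigr)^{n/4-2} e(n) \pmod{2^{n/2-v(n)}}
\]
and the vanishing $e(n)^2 = 0$ from (\ref{relationinbar}), forces $e(1)^{n^2/4-n}\, e(n) \in 2^{n/2-v(n)}\CH(\bar X)$, and hence pushes such summands into $2^{3n/4-v(n)+1}\CH(\bar X)$.

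The subtlest case, and the main obstacle, is when all $l + j_l \in I_3$ while some $j_l > 0$. The key observation here is that a weakly increasing self-map of the totally ordered set $I_3$ (of size $n/4+1$) is bijective exactly when it is the identity, so the multi-set $\{l+j_l\}_{l\in I_3}$ of size $n/4+1$ lying in $I_3$ must repeat some element $k \in I_3$. Since $k \geq 3n/4 - 1$ gives $2k \geq 3n/2 - 2 > n$, we have $e(2k) = 0$, and relation (\ref{relationinbar}) then yields $e(k)^2 \in 2\CH(\bar X)$; consequently $\prod_l e(l+j_l) \in 2\CH(\bar X)$, and combining with $e(1)^{n^2/4-n} \in 2^{n/2-v(n)-1}\CH(\bar X)$ places this summand in $2^{3n/4-v(n)+1}\CH(\bar X)$ as well. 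Summing the three cases then delivers the desired congruence.
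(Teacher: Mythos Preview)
Your proof is correct and follows the same approach as the paper's: both isolate the leading term $2^{n/4+1}e(I_3)$, dispose of the remaining summands by the two cases ``$e(n)$ is present'' and ``some $e(k)$ with $k\in I_3$ is repeated'', and then invoke Proposition~\ref{propK:eoneeIthree} (the paper handles the $e(n)$ case via $e(n)\equiv e(1)^n\bmod 2$ and the bound on $v(e(1)^{n^2/4})$, which is equivalent to your use of $e(n)^2=0$). One caveat: the phrase ``weakly increasing self-map'' is imprecise, since $l\mapsto l+j_l$ need not be monotone; the property you actually need (and which does hold) is that $\sigma(l)\ge l$ for every $l$, and any self-map of a finite totally ordered set with this property is a bijection only if it is the identity.
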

\begin{proof}
	The proof is similar to the proof of Corollary \ref{cor:fonepower}.
	Again, the first statement follows immediately from the last statement of Lemma \ref{lemK:eonenj}.
	For the second statement,
	we show that
	\begin{equation}\label{eq:eonensquare}
		e(1)^{\frac{n^{2}}{4}-n} \res(\widehat{S}(I_{3}))\equiv e(1)^{\frac{n^{2}}{4}-n}\cdot 2^{\frac{n}{4}+1}\cdot e(I_{3}) \mod 2^{\frac{3n}{4}-v(n)+1}.
	\end{equation}
	Then, the second statement immediately follows by Proposition \ref{propK:eoneeIthree} and (\ref{eq:vnoverfour}).
	
	The term $\widehat{S}(I_{3})$ is expanded as 
	\begin{equation*}
	\widehat{S}(I_{3})=c(I_{3})+\sum_{L}a(L)c(L),
	\end{equation*}
where $a(L)\in \mathbb{N}$ 
and the sum ranges over some multi-subsets $L \subset \bar{I}_{3}$ 
such that $L$ contains either $n$ or a multiple element. Then, by (\ref{eq:rescitwoe}) we get
\begin{equation}\label{eq:reshatSI3}
	\res(\widehat{S}(I_{3}))=2^{\frac{n}{4}+1}e(I_{3})+2^{\frac{n}{4}+1}\sum_{L}a(L)e(L).
\end{equation}
	
If $n\in L$, 
then since $e(n)\equiv e(1)^{n} \mod 2$ by (\ref{relationinbar}), we get by Lemma \ref{lemK:eonenj} 
\begin{equation}\label{eq:ninKeone}
e(1)^{\frac{n^{2}}{4}-n} \cdot 2^{\frac{n}{4}+1} e(L) \equiv e(1)^{\frac{n^{2}}{4}} \cdot 2^{\frac{n}{4}+1} e(L-\{n\}) \equiv 0 \mod 2^{\frac{3n}{4} -v(n)+1}.
\end{equation}
If $L$ has a multiple element $i\in \bar{I}_{3}$, then $e(i)^{2}\equiv 0 \mod 2$ by (\ref{relationinbar}), thus by Lemma \ref{lemK:eonenj} again, we get
\begin{equation}\label{eq:Khasmultiple}
			e(1)^{\frac{n^{2}}{4}-n} \cdot 2^{\frac{n}{4}+1} e(L) = e(1)^{\frac{n^{2}}{4}-n} \cdot 2^{\frac{n}{4}+1} e(L-\{i,i\}) e(i)^2 \equiv 0 \mod 2^{\frac{3n}{4} -v(n)+1}.
\end{equation}
Hence, the equation in (\ref{eq:eonensquare}) immediately follows from (\ref{eq:reshatSI3}), (\ref{eq:ninKeone}), and (\ref{eq:Khasmultiple}).
\end{proof}

For $n=8$, an analogue of the following proposition is proved inside the proof of \cite[Proposition 3.3]{Kar2020} 
(see Remark \ref{lem:indXplus1n8} below).

\begin{prop}\label{lem:indXplus1}
Let $n\ge 16$. Then, we have
	\begin{equation*}
		e(1)^{\frac{n^{2}}{4}-1}\cdot \res(\widehat{S}(I_{3}\cup I_{4}\cup \{2,3\}))
		\equiv 
		\ind X\cdot e([1,n]) \mod 2 \ind X.
	\end{equation*}
\end{prop}
\begin{proof}
	The proof is similar to the proof of Proposition \ref{lemK:indXplus1}. By (\ref{relationinbar}), $e(1)^{m}\equiv e(m) \mod 2$ for any $2$-power integer $m$, thus 
	\begin{equation}\label{eq:eonenequiv}
		e(1)^{n-1}\equiv \prod_{k=0}^{v(n)-1}e(2^{k}) \,\,\text{ and }\,\, 
		e(2^{i})e(1)e(2)e(4)e([6, n]) \equiv 0 \mod 2
	\end{equation}
	for any $0\leq i\leq v(n)$. By Corollary \ref{lem:eonepower}, (\ref{eq:frac3n4}), and the first equation in (\ref{eq:eonenequiv}) we get a congruence modulo
	$2^{n-2v(n)+1}$:
	\begin{equation}\label{eq:eonefracnsqures}
		e(1)^{\frac{n^{2}}{4}-1}\res(\widehat{S}(I_{3}\cup I_{4})) \equiv 
		2^{\frac{3n}{4}-v(n)}\cdot \res(\widehat{S}(I_{4}))\cdot e([\tfrac{n}{4}+2, n]) \prod_{k=0}^{v(n)-2}e(2^{k}).
	\end{equation}
	
The term $\widehat{S}(I_4)$ is expanded as
\begin{equation*}
\widehat{S}(I_4)=c(I_{4})+\sum_{L}a(L)c(L),
\end{equation*}
where $a(L)\in \mathbb{N}$ and the sum ranges over some multi-subsets $L\subset [1,n]$ 
such that the multiset 
$L+ [\tfrac{n}{4}+2, n] + \{2^{i}\,|\, 3\leq i\leq v(n)-2\}$ 
satisfies the condition $(\ast)$ in Lemma \ref{lemK:ijmultisubsetnew}. Since $|I_4| = \tfrac{n}{4}-v(n)$, by (\ref{eq:rescitwoe}) we get
\begin{equation}\label{eq:reshatSI4}
\res(\widehat{S}(I_{4}))=2^{\frac{n}{4}-v(n)}e(I_{4})+2^{\frac{n}{4}-v(n)}\sum_{L}a(L)e(L).
\end{equation}
By Lemma \ref{lemK:ijmultisubsetnew}, $e(L) e([\tfrac{n}{4}+2, n]) \prod_{k=3}^{v(n)-2}e(2^{k})$ 
is divisible by 2, thus by (\ref{eq:eonefracnsqures}) and (\ref{eq:reshatSI4}) we have a congruence modulo
$2^{n-2v(n)+1}$
\begin{equation*}
	e(1)^{\frac{n^{2}}{4}-1}\res(\widehat{S}(I_{3}\cup I_{4})) \equiv
2^{n-2v(n)} e(1)e(2)e(4)e([6,n]).
\end{equation*}

Now let us multiply both sides of the last congruence by
\begin{equation}\label{eq:shat23}
\res(\widehat{S}(\{2,3\}))=2^{2}(e(2)+e(3))(e(3)+2e(4)+e(5)).
\end{equation} 
Since $2^{n-2v(n)+3} = 2 \ind X$, 
by the second equation in (\ref{eq:eonenequiv}) it is enough to show that the following holds modulo $2$
\begin{equation*}
e(3)^2 \cdot e(1)e(2)e(4)e([6,n]) \equiv 0.
\end{equation*}
Since $e(3)^2 \equiv e(6) \mod 2$ by (\ref{relationinbar}), and the multiset $\{6\} + [6,n]$ satisfies the condition $(\ast)$ in Lemma \ref{lemK:ijmultisubsetnew}, the term $e(\{6\} + [6,n])$ is divisible by $2$, which completes the proof.\end{proof}
\begin{rem}\label{lem:indXplus1n8}
		Let $n=8$ and $I_{3}'=[6,7]$. Then, the statement of Proposition \ref{lem:indXplus1} becomes
	\begin{equation}\label{eq:eonefifteen}
	e(1)^{15} \cdot \res(\widehat{S}(I_{3}' \cup \{2,3\})) \equiv 2^4 \cdot e([1,8]) \mod 2^5.
	\end{equation}
	Since  
	$e(1)^{8}\equiv e(8)$,\,\,\, $e(8)^{2}\equiv e(7)^{2}\equiv 0 \mod 2$, and 
	\begin{equation*}
	\widehat{S}(6) = c(6)+5c(7)+10c(8),\,\,\, \widehat{S}(7) = c(7)+6c(8),
	\end{equation*}
    we have
	\begin{equation*}
	e(1)^{8}\res(\widehat{S}(I_{3}')) \equiv 2^{2}e([6,8]) \mod 2^{3}.
	\end{equation*}
	Hence, the congruence (\ref{eq:eonefifteen}) follows from (\ref{eq:shat23}) and
	\begin{equation*}
	e(2)^2 e(4) e(8) \equiv e(4)^2 e(8) \equiv e(8)^2 \equiv e(3)^2 e(6) \equiv e(6)^2 \equiv 0 \mod 2.
	\end{equation*}
\end{rem}

\section{Proof of Theorem \ref{mainthm}}\label{sec:proofofthm}
In this section, we set
\begin{equation}\label{eq:setJcases}
J=\begin{cases}
[2,3]\cup I_{3}\cup I_{4} & \text{ if } n\geq 16,\\
[2, 3]\cup I'_{3}\cup I_{4}=\{2,3, 6,7\} &\text{ if } n=8.
\end{cases}
\end{equation}
Then, a direct computation shows that 
\begin{equation}\label{eq:sizeofJset}
|J| = \tfrac{n}2 - v(n)+3.
\end{equation}

Consider the following element
\begin{equation}\label{eq:elementx}
	x=e^{\frac{n^{2}}{4}-1}\cdot \prod_{j \in J}c(j)\in \CH(X).
\end{equation}
Since $\dim X=\tfrac{n^{2}+n}{2}$ and $(\tfrac{n^{2}}{4}-1)+\sum_{j\in J}j=\dim X- 3$, we have $x\in \CH_{3}(X)$ and $\phi(x)\in K(X)^{(\dim X -3)}$. We first prove the $2$-divisibility of the image of $x$ under the map in (\ref{eq:conjecture}).

\begin{prop}\label{prop:GK}
Let $y = f(1)^{\frac{n^{2}}{4}-1}\cdot g(J)$, where $J$ denotes the set in $(\ref{eq:setJcases})$. Then, $y \in I(X)$ and $\varphi(x)$ is divisible by 2 in $GK(X)$.
\end{prop}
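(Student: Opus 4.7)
The plan is in two parts. First, verify that $\xi(y) = \varphi(x)$, so $y$ is a preimage of $\varphi(x)$ under $\xi$; after that, the conclusion that $\varphi(x)$ is divisible by $2$ in $GK(X)$ reduces to $y \in I(X)$ via (\ref{eq:xiix2gkx}). The first part is essentially a one-line check: Lemma \ref{ecliffordtogkxnobar} gives $\xi(f(1)) = \varphi(e)$, formula (\ref{eq:gikigicki}) combined with (\ref{imageunderphi}) gives $\xi(g(j)) = \varphi(c(j))$ for each $j \in J$, and $\xi$ is a ring homomorphism.

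The substantive part is $y \in I(X)$, which I would obtain via the inductive filtration argument outlined in the introduction for the special case $l = r - 3$. The first ingredient is Proposition \ref{lemK:indXplus1}(a) (or its $n=8$ version (\ref{eq:fonefifteen1})): this gives $y \in I(\bar{X})^{m+1}$, where $m = v(\ind X)$. The second ingredient is the decomposition of $\widetilde{K}^{l}(\bar X) \cap I(\bar X)^{m+j}$ stated just above (\ref{eq:containmentK}), which reveals that modulo $I(X)$ the intersection $\widetilde{K}^{r-3}(\bar{X})\cap I(\bar{X})^{m+1}$ is generated by $(2^{m-1}\lk)u^{r-3}$ and $(2^{m-2}\pk)u^{r-3}$, the intersection $\widetilde{K}^{r-3}(\bar{X})\cap I(\bar{X})^{m+2}$ by $(2^{m-1}\pk)u^{r-3}$, while $\widetilde{K}^{r-3}(\bar{X})\cap I(\bar{X})^{m+3} \subset I(X)$ outright. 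So once all three generators are placed in $I(X)$, the conclusion $y \in I(X)$ follows.

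The key auxiliary element is $z = f(1)^{\frac{n^{2}}{4}-2}\,g(I_{3}\cup I_{4}\cup \{2,4\}) \in \widetilde{K}^{r-3}(X)$ (with $I_3$ replaced by $I'_3$ when $n=8$). Proposition \ref{lemK:indXplus1}(b), or (\ref{eq:fonefifteen2}) for $n=8$, gives $z \equiv 2^{m-2}\,t^{2}f([2,n]) = 2^{m-2}\lk u^{r-3} \mod I(\bar{X})^{m+1}$, using $f([2,n]) = \lk u^{r-1}$. Since $z, f(1) \in \widetilde{K}(X)$ and $2, t \in I(X)$, each of the three products $2z$, $tf(1)z$, and $2tf(1)z$ lies in $I(X)$; combining with $\ekbar(1)\lk = \pk$ and the fact that multiplication by $2$ or by $t$ sends $I(\bar{X})^{m+1}$ into $I(\bar{X})^{m+2}$ (and $I(\bar{X})^{m+2}$ into $I(\bar{X})^{m+3}$), I can read off
\begin{align*}
2z &\equiv (2^{m-1}\lk)u^{r-3} \mod I(\bar{X})^{m+2},\\
tf(1)z &\equiv (2^{m-2}\pk)u^{r-3} \mod I(\bar{X})^{m+2},\\
2tf(1)z &\equiv (2^{m-1}\pk)u^{r-3} \mod I(\bar{X})^{m+3}.
\end{align*}

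Now I walk backwards through the filtration: the third congruence combined with $\widetilde{K}^{r-3}(\bar{X})\cap I(\bar{X})^{m+3}\subset I(X)$ forces $(2^{m-1}\pk)u^{r-3} \in I(X)$; this in turn shows $\widetilde{K}^{r-3}(\bar{X})\cap I(\bar{X})^{m+2}\subset I(X)$, and feeding that into the first two congruences gives $(2^{m-1}\lk)u^{r-3}, (2^{m-2}\pk)u^{r-3} \in I(X)$; hence $\widetilde{K}^{r-3}(\bar{X})\cap I(\bar{X})^{m+1}\subset I(X)$ and therefore $y \in I(X)$, completing the proof. The genuinely hard work sits inside Propositions \ref{lemK:indXplus1}(a) and (b), which do the filtration control in $\widetilde{K}(\bar X)$; beyond that, the delicate point is to identify $z$, $tf(1)z$ and $2tf(1)z$ as precisely the three lifts needed to knock out the three successive generators, and to track the filtration drop caused by each extra factor of $2$ or $t$ without miscounting.
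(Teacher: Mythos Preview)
Your proposal is correct and follows essentially the same strategy as the paper: the same auxiliary element $z$, the same invocations of Proposition~\ref{lemK:indXplus1}(a),(b) (and their $n=8$ analogues), and the same use of $2z$, $tf(1)z$, $2tf(1)z$ to kill the successive generators of $\widetilde{K}^{r-3}(\bar{X})\cap I(\bar{X})^{m+j}$ modulo $I(X)$. Your organization---showing outright that each generator lies in $I(X)$ and then concluding $\widetilde{K}^{r-3}(\bar{X})\cap I(\bar{X})^{m+1}\subset I(X)$---is in fact the cleaner variant the paper records in Remark~\ref{rem:proofKpart}, whereas the paper's main proof instead subtracts multiples of $2z$ and $tf(1)z$ from an explicit decomposition of $y$ and iterates; the content is the same.
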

\begin{proof}
Let $z=f(1)^{\frac{n^{2}}{4}-2}\cdot g(J')$, where $J'$ denotes the set obtained from $J$ by replacing the element $3$ with $4$. Then, by (\ref{eq:gikigicki}) both $y$ and $z$ belong to $\widetilde{K}^{\dim X -3}(X)$. We first show that $y\in I(X)$. By Proposition \ref{lemK:indXplus1} (a) for $n \geq 16$ and by (\ref{eq:fonefifteen1}) for $n=8$, we can write
\begin{equation*}
	y=2^{m+1}\cdot y_{0}+2^{m}t\cdot y_{1}+2^{m-1}t^{2}\cdot y_{2}+2^{m-2}t^{3}\cdot y_{3},
\end{equation*}
where $m=v(\ind X)$ and $y_{i}\in \widetilde K^{\dim X-3+i} (\bar X)$. By (\ref{eq:twoindextindex}), it suffices to prove that
\begin{equation*}
	y':=2^{m-1}t^{2}\cdot y_{2}+2^{m-2}t^{3}\cdot y_{3} \in I(X).
\end{equation*}

We simply write $\pk$ and $\lk$ for the classes of $\prod_{i=1}^{n}\ekbar(i)$ and $\prod_{i=2}^{n}\ekbar(i)$ in $K(\bar{X})^{(\dim\bar{X})}$ and $K(\bar{X})^{(\dim\bar{X}-1)}$, respectively, as in Section \ref{subsec:Grothendieck}. Since $2^{m-1}t^{2}\cdot t\pk=2^{m-2}t^{3}\cdot 2\pk$, by (\ref{Kpl}), 
$y'$
can be written as
\begin{equation}\label{eq:ybiga2}
	y'=a(2^{m-1}t^{2})\cdot \lk u^{\dim X - 1} + b(2^{m-2}
	t^{3}
	)\cdot \pk u^{\dim X}
\end{equation}
for some $a, b\in \Z$. On the other hand, by Proposition \ref{lemK:indXplus1} (b) for $n \geq 16$ and by (\ref{eq:fonefifteen2}) for $n=8$, we have
\begin{equation}
2z\equiv (2^{m-1}t^{2}\cdot \lk )u^{\dim X -1}\,\, \text{ and }\,\, tf(1)z\equiv (2^{m-2}
t^{3}
\cdot \pk)
u^{\dim X}
\mod I(\bar{X})^{m+2},
\end{equation}
thus it follows by (\ref{eq:ybiga2}) that
\begin{equation*}
y'-2az-btf(1)z\in I(\bar{X})^{m+2}\cap \widetilde{K}^{\dim X -3}
(X).
\end{equation*}
Hence, we can write
\begin{equation*}
	y'-2az-btf(1)z=2^{m+2}\cdot z_{0}+2^{m+1}t\cdot z_{1}+2^{m}t^{2}\cdot z_{2}+2^{m-1}t^{3}\cdot z_{3},
\end{equation*}
where $z_{i}\in \widetilde K^{\dim X-3+i} (\bar X)$, thus by (\ref{eq:twoindextindex}), it suffices to prove that
\begin{equation*}
	y'':=2^{m-1}t^{3}\cdot z_{3} \in I(X).
\end{equation*}

By (\ref{Kpl}), $y''$ can be written as 
\begin{equation}\label{eq:y2mminusone}
	y''=b'(2^{m-1}t^{3})\cdot \pk u^{\dim X}
\end{equation}
for some $b'\in \mathbb{Z}$. Since
\begin{equation*}
	(2^{m-1}t^{3}\cdot \pk)u^{\dim X}\equiv 2tf(1)z \mod I(\bar{X})^{m+3},
\end{equation*}
we obtain
\begin{equation*}
y''-2b'tf(1)z \in I(\bar{X})^{m+3}\cap \widetilde{K}^{\dim X -3}(X).
\end{equation*}
As every element in $I(\bar{X})^{m+3}\cap \widetilde{K}^{\dim X -3}(X)$ belongs to $I(X)$ by (\ref{eq:twoindextindex}), we get $y'' \in I(X)$, and therefore $y \in I(X)$.

For the statement about $x$, note that by the second equation in (\ref{eq:gikigicki}), we have 
\begin{equation*}
f(1)^{\frac{n^{2}}{4}-1}\cdot \prod_{j \in J}\ck(j)u^{j} \in I(X).
\end{equation*}
Also, by (\ref{imageunderphi}) and Lemma \ref{ecliffordtogkxnobar}, we get 
\begin{equation*}
\varphi(x) = \xi \big( f(1)^{\frac{n^{2}}{4}-1}\cdot \prod_{j \in J}\ck(j)u^{j} \big),
\end{equation*}
where $\xi$ denotes the morphism in (\ref{eq:gkalternativeconstruction}). Hence, by (\ref{eq:xiix2gkx}) $\varphi(x)$ is divisible by $2$.
\end{proof}
\begin{rem}\label{rem:proofKpart}
In the proof of Proposition \ref{prop:GK}, we have shown that $(2^{m-1}\lk)u^{\dim X-3}$ and $(2^{m-2}\pk)u^{\dim X-3}$ are contained in $I(X) +I(\bar{X})^{m+2}$. 
In fact, we could alternatively show that the following slightly stronger statement holds:
\begin{equation}\label{eq:strongeremone}
	(2^{m-1}\lk)u^{\dim X-3},\, (2^{m-2}\pk)u^{\dim X-3}\in I(X).
\end{equation}
Since $z-(2^{m-2}\lk)u^{\dim X-3}\in I(\bar{X})^{m+1}\cap \widetilde{K}^{\dim X-3}(\bar{X})$ (Proposition \ref{lemK:indXplus1} (b)), the same argument as in the beginning of the proof of Proposition \ref{prop:GK} shows that
\begin{equation}\label{eq:zminustwo}
z-(2^{m-2}\lk)u^{\dim X-3}\equiv (a2^{m-1}\lk+b2^{m-2}\pk) u^{\dim X - 3} \mod I(X)
\end{equation}
for some $a, b\in \Z$. Since $2z,\, 
\ekbar(1)z
\in I(X)$, multiplying the congruence in (\ref{eq:zminustwo}) by $2$ and 
$\ekbar(1)$, 
we have
\begin{equation*}
(2^{m-1}\lk+b2^{m-1}\pk)u^{\dim X - 3},\,\, (2^{m-2}\pk+a2^{m-1}\pk)u^{\dim X - 3} \in I(X).
\end{equation*}
As $(2^{m-1}\pk)u^{\dim X - 3}=\ekbar(1)
(2^{m-1}\lk+b2^{m-1}\pk)u^{\dim X - 3}\in I(X)$, the statement in (\ref{eq:strongeremone}) follows.

\end{rem}

Recall from (\ref{eq:steenrodformula}) that $Q(i,j)$ 
denotes a linear combination   of $\bar{c}(k)\bar{c}(i+j-k)$ for $1\leq k\leq i$. We write $\widehat{Q}(i,j)$ 
for the linear combination obtained from $Q(i,j)$ 
by replacing every term $\bar{c}(k)\bar{c}(i+j-k)$ with $c(k)c(i+j-k)$. Set
\begin{equation*}
\widetilde{S}^j(i)=\widehat{S}^j(i)+\widehat{Q}(i,j),
\quad \widetilde{S}(i)=\sum_{j\geq 0} \widetilde{S}^j(i),
\end{equation*}
where $\widehat{S}^j(i)$ denotes the term in (\ref{eq:steenrodshortenedformula}), i.e., $\widetilde{S}(i)$ is an integral representative of $S(\bar{c}(i))$. For a subset $L \subset [1,n]$,  
denote $\widetilde{S}(L) = \prod_{l \in L} \widetilde{S}(l)$.

	Before we prove that $x \in \CH(X)$ is not divisible by $2$, we shall need the following lemma.

\begin{lemma}\label{lem:eonenjtorsioncor}
	For any $n\geq 8$, we have 
	$e(1)^{\frac{n^{2}}{4}} \res(\widetilde{S}(J)) \equiv 0 \mod 2 \ind X$ 
	and
\begin{equation*}
e(1)^{\frac{n^{2}}{4}-1} \res(\widetilde{S}(J))\equiv e(1)^{\frac{n^{2}}{4}-1}\res(\widehat{S}(J)) \mod 2 \ind X.
\end{equation*}
\end{lemma}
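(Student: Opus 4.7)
The plan is to first establish the congruence (2) by a direct divisibility estimate, then derive (1) from (2) combined with Proposition~\ref{lem:indXplus1} (or Remark~\ref{lem:indXplus1n8} when $n=8$) and Lemma~\ref{lemK:ijmultisubsetnew}.

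For (2), I would first expand
\[
\widetilde{S}(J) - \widehat{S}(J) = \sum_{\emptyset \neq K \subset J} \widehat{Q}(K)\,\widehat{S}(J \setminus K),
\]
where $\widehat{Q}(l) := \sum_{j \geq 1}\widehat{Q}(l,j)$ and $\widehat{Q}(K) := \prod_{l \in K}\widehat{Q}(l)$. Each factor $\widehat{Q}(l)$ is a $\mathbb{Z}$-linear combination of products $c(k)c(l+j-k)$, so by (\ref{eq:rescitwoe}) the restriction $\res(\widehat{Q}(l))$ lies in $4\CH(\bar{X})$, while $\res(\widehat{S}(l)) \in 2\CH(\bar{X})$; hence each summand $\res(\widehat{Q}(K)\widehat{S}(J\setminus K))$ lies in $2^{|J|+|K|}\CH(\bar{X})$. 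Using the bound $v(e(1)^{\frac{n^2}{4} - 1}) \geq \tfrac{n}{2} - v(n) - 1$ obtained from Lemma~\ref{lemK:eonenj} applied to $j = \tfrac{n}{4} - 1$ (together with (\ref{eq:vnoverfour})), and $|J| = \tfrac{n}{2} - v(n) + 3$ from (\ref{eq:sizeofJset}), the $2$-adic valuation of each summand of $e(1)^{\frac{n^2}{4} - 1}\res(\widetilde{S}(J) - \widehat{S}(J))$ is at least
\[
\bigl(\tfrac{n}{2} - v(n) - 1\bigr) + \bigl(\tfrac{n}{2} - v(n) + 3\bigr) + |K| \geq n - 2v(n) + 3 = v(2\ind X)
\]
whenever $|K| \geq 1$, establishing (2).

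To prove (1), I multiply (2) by $e(1)$ to obtain $e(1)^{\frac{n^2}{4}}\res(\widetilde{S}(J)) \equiv e(1)^{\frac{n^2}{4}}\res(\widehat{S}(J)) \pmod{2\ind X}$, reducing (1) to showing that the right-hand side vanishes modulo $2\ind X$. By Proposition~\ref{lem:indXplus1} (or Remark~\ref{lem:indXplus1n8} when $n = 8$),
\[
e(1)^{\frac{n^2}{4} - 1}\res(\widehat{S}(J)) \equiv \ind X \cdot e([1,n]) \pmod{2\ind X},
\]
so multiplying by $e(1)$ gives $\ind X \cdot e(1)^2\,e([2,n]) \pmod{2\ind X}$. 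The multiset $\{1,1\} \cup [2,n]$ satisfies condition~$(\ast)$ of Lemma~\ref{lemK:ijmultisubsetnew} with $k = 1$, so $e(1)^2\,e([2,n]) \equiv 0 \pmod 2$, whence $\ind X \cdot e(1)^2\,e([2,n]) \equiv 0 \pmod{2\ind X}$, which completes (1).

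The argument is essentially bookkeeping of $2$-adic valuations and relies on no new ideas beyond those already developed in Section~\ref{sec:congruencerel}. The only minor subtlety is that $n = 8$ requires Remark~\ref{lem:indXplus1n8} in place of Proposition~\ref{lem:indXplus1}, but since the resulting congruence is identical, the argument goes through uniformly for all $n \geq 8$.
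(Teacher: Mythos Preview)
Your proof of (2) is essentially the paper's: you expand $\widetilde S(J)-\widehat S(J)$ into terms containing at least one $\widehat Q$-factor, observe that restriction picks up $2^{|J|+|K|}\ge 2^{|J|+1}$, and combine this with the bound $v\bigl(e(1)^{\frac{n^2}{4}-1}\bigr)\ge \tfrac{n}{2}-v(n)-1$ coming from Lemma~\ref{lemK:eonenj}. The arithmetic matches the paper's line $v\bigl(2^{|J|+1}e(1)^{\frac{n^2}{4}-1}\bigr)\ge n-2v(n)+3>v(\ind X)$.

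Your route to (1), however, is considerably heavier than the paper's. You deduce (1) from (2) by invoking Proposition~\ref{lem:indXplus1} (or Remark~\ref{lem:indXplus1n8}) together with Lemma~\ref{lemK:ijmultisubsetnew}; this is correct and there is no circularity, since both results are proved earlier. But the paper dispatches (1) in one line, using only the sharper valuation bound $v\bigl(e(1)^{\frac{n^2}{4}}\bigr)\ge \tfrac{n}{2}-2$ (the case $j=\tfrac{n}{4}$ of Lemma~\ref{lemK:eonenj}) and the fact that every monomial of $\widetilde S(J)$ contains at least $|J|$ Chern-class factors, so $\res(\widetilde S(J))\in 2^{|J|}\CH(\bar X)$; the resulting total valuation $(\tfrac{n}{2}-v(n)+3)+(\tfrac{n}{2}-2)=n-v(n)+1$ already exceeds $v(\ind X)=n-2v(n)+2$. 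Your detour through Proposition~\ref{lem:indXplus1}---the hardest computation in Section~\ref{sec:congruencerel}---is unnecessary here, and your closing remark that the argument ``relies on no new ideas beyond those already developed'' undersells how much of Section~\ref{sec:congruencerel} you are actually importing for (1).
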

\begin{proof}
Note that $v(\ind X)=n - 2v(n)+2$. 
By Lemma \ref{lemK:eonenj} and (\ref{eq:sizeofJset}), we have
\begin{equation*}
	v\left( 2^{|J|} e(1)^{\frac{n^{2}}{4}} \right) \ge (\tfrac{n}{2}-v(n)+3) + (\tfrac{n}{2}-2) >
	v(\ind X) 
	\text{ and }
\end{equation*}
\begin{equation*}
v\left( 2^{|J|+1} e(1)^{\frac{n^{2}}{4}-1} \right) \ge 
v\left( 2^{|J|+1} e(1)^{\frac{n^{2}}{4}-n} \right) \ge (\tfrac{n}{2}-v(n)+4) + (\tfrac{n}{2}-1 - v(n)) > v(\ind X).
\end{equation*}
Hence, the first statement
follows from (\ref{eq:rescitwoe}). For the second statement, note additionally that $\widetilde{S}(J) - \widehat{S}(J)$ 
is the sum of several products of the form $\prod_{1 \le k \le |J|} A_k$, 
where each $A_k$ can be either $\widehat S(i)$, or $\widehat{Q}(i,j)$, and at least one factor $\widehat{Q}(i,j)$ is present.
So, by (\ref{eq:rescitwoe}), $\res(\prod_{1 \le k \le |J|} A_k)$ is divisible by $2^{|J|+1}$, and the second statement follows.
\end{proof}

Finally, let us prove the non-2-divisibility in $\CH(X)$.

\begin{prop}\label{prop:chow}
	For any $n\geq 8$,
	the element $x$ in $(\ref{eq:elementx})$ is not divisible by $2$.
\end{prop}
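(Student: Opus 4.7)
The plan is to apply the Steenrod-plus-degree strategy outlined in the introduction. Set $m = v(\ind X) = n - 2v(n) + 2$. Since the degree map $\deg \colon \CH(X) \to \Z$ has image $\ind X \cdot \Z = 2^m \Z$, if $x$ were divisible by $2$ in $\CH(X)$ then $\bar x = 0$ in $\Ch(X)$, so $S(\bar x) = 0$, so any integral lift $x' \in \CH(X)$ of $S(\bar x)$ would lie in $2 \CH(X)$ and thus satisfy $\deg(x') \in 2 \ind X \cdot \Z$. It is therefore enough to exhibit an integral lift $x'$ of $S(\bar x)$ with $\deg(x') \equiv \ind X \pmod{2 \ind X}$.

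By the Cartan formula, by (\ref{eq:steenrode}), and by the construction of $\widetilde S(j)$ as an integral representative of $S(\bar c(j))$, a natural such lift is
\[
x' := (e + e^2)^{\frac{n^2}4 - 1} \cdot \widetilde S(J) \in \CH(X).
\]
Since $\deg_X = \deg_{\bar X} \circ \res$ and $\res(e) = e(1)$, computing $\deg(x')$ modulo $2\ind X$ amounts to extracting the coefficient of $p = e([1,n])$ in
\[
\bigl(e(1) + e(1)^2\bigr)^{\frac{n^2}4 - 1} \cdot \res\bigl(\widetilde S(J)\bigr) \in \CH(\bar X)
\]
modulo $2\ind X$. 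Next I would expand binomially. The first statement of Lemma \ref{lem:eonenjtorsioncor} gives $e(1)^{\frac{n^2}4} \cdot \res(\widetilde S(J)) \equiv 0 \pmod{2 \ind X}$, which kills every term in the expansion with $k \ge 1$. For the remaining $k = 0$ term, the second statement of the same lemma replaces $\res(\widetilde S(J))$ by $\res(\widehat S(J))$ modulo $2 \ind X$.

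At this point the whole question reduces to the congruence
\[
e(1)^{\frac{n^2}4 - 1} \cdot \res\bigl(\widehat S(J)\bigr) \equiv \ind X \cdot p \pmod{2 \ind X},
\]
which is precisely the content of Proposition \ref{lem:indXplus1} for $n \ge 16$ and of Remark \ref{lem:indXplus1n8} for $n = 8$. Combining these ingredients gives $\deg(x') \equiv \ind X \pmod{2 \ind X}$, an odd multiple of $\ind X$; hence $x' \notin 2 \CH(X)$, and therefore $x \notin 2 \CH(X)$. The substantive obstacle has already been handled in Section \ref{sec:congruencerel}: given the delicate computations behind Lemma \ref{lem:eonenjtorsioncor} and Proposition \ref{lem:indXplus1}, the remaining proof is essentially a clean assembly of the Steenrod expansion together with the two reductions above.
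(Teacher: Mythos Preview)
Your proposal is correct and follows essentially the same approach as the paper: you use the identical integral lift $x' = (e+e^2)^{\frac{n^2}{4}-1}\widetilde S(J)$ (denoted $w$ in the paper), reduce via Lemma~\ref{lem:eonenjtorsioncor} to $e(1)^{\frac{n^2}{4}-1}\res(\widehat S(J))$, invoke Proposition~\ref{lem:indXplus1} (resp.\ Remark~\ref{lem:indXplus1n8} for $n=8$), and conclude with the degree map. The only cosmetic difference is that the paper phrases the final step via the well-defined homomorphism $\tfrac{\deg}{\ind X}\colon \Ch(X)\to \Z/2\Z$ applied to $S^3(\bar x)$, whereas you argue by contraposition; the content is the same.
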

\begin{proof}
Let $w=(e+e^{2})^{\frac{n^{2}}{4}-1} \widetilde{S}(J) $. Then, the ($\dim X$)th degree homogeneous part of $w$ is an integral representative of $S^{3}(\bar{x})$, i.e., the ($\dim X$)th degree homogeneous part of $\bar{w}\in \Ch(X)$ is equal to $S^{3}(\bar{x})$. We show that
	\begin{equation}\label{eq:resS0x}
	\res (w)
	\equiv \ind X \cdot p \mod 2 \ind X,
	\end{equation}
	where $p$ denotes the class of a rational point as in Section \ref{subsection:chow}. Since
	\begin{equation*}
	w=\big(e^{\frac{n^{2}}{4}-1} + e^{\frac{n^{2}}{4}} \alpha(e)\big) \widetilde{S}(J),
	\end{equation*}
	where $\alpha(e)$ is a polynomial in $e$ with integer coefficients, by Lemma \ref{lem:eonenjtorsioncor}, we have modulo $2 \ind X$:
		\begin{equation*}
	\res(w) \equiv e(1)^{\frac{n^{2}}{4}-1} \res(\widetilde{S}(J)) \equiv e(1)^{\frac{n^{2}}{4}-1} \res(\widehat{S}(J)).
	\end{equation*}
	Hence, by Proposition \ref{lem:indXplus1} we obtain (\ref{eq:resS0x}).

	Let $\deg:\CH(X)\to \Z$ denote the degree homomorphism. Then, it induces the morphism
	\begin{equation*}
	\tfrac{\deg}{\ind X}:\Ch(X)\to \Z/2\Z
	\end{equation*}   sending the class of a closed point $v$ of $X$ to the class of 
	$\deg(v)/\ind X$. 
	Since the restriction map commutes with the degree homomorphism, by (\ref{eq:resS0x}) we have
	\begin{equation*}
	\tfrac{\deg}{\ind X}(\bar{w})=\tfrac{\deg}{\ind X}(S^{3}(\bar{x}))=1.
	\end{equation*}
	Therefore, $\bar{x}$ is nonzero in $\Ch(X)$, thus $x$ is not divisible by $2$ in $\CH(X)$.\end{proof}

\begin{thm}
$\varphi$ is not injective.
\end{thm}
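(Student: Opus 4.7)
The plan is to derive this immediately as a corollary of Proposition \ref{prop:GK} and Proposition \ref{prop:chow}, using the fact already recorded in the introduction that $\varphi$ is surjective. Specifically, Proposition \ref{prop:GK} furnishes an element $x \in \CH^{3}(X)$ whose image $\varphi(x) \in GK^{3}(X)$ lies in $2\, GK^{3}(X)$, while Proposition \ref{prop:chow} asserts that $x$ itself is not divisible by $2$ in $\CH^{3}(X)$. These two facts together are already incompatible with injectivity of $\varphi$.

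To spell out the contradiction: assume $\varphi$ is injective. Since $\varphi$ is a surjective graded homomorphism, the restriction $\varphi^{3}\colon \CH^{3}(X) \to GK^{3}(X)$ is in fact an isomorphism. Write $\varphi(x) = 2z$ with $z \in GK^{3}(X)$ (possible by Proposition \ref{prop:GK}), and pick $x' \in \CH^{3}(X)$ with $\varphi(x') = z$ using surjectivity of $\varphi^{3}$. Then
\begin{equation*}
\varphi(x - 2x') = \varphi(x) - 2\varphi(x') = 2z - 2z = 0,
\end{equation*}
so injectivity forces $x = 2x'$, contradicting Proposition \ref{prop:chow}. Hence $\varphi$ is not injective, which is exactly the conclusion of Theorem \ref{mainthm} for every $2$-power $n \geq 8$.

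There is no genuine obstacle at this stage: all the difficult work has already been absorbed into Propositions \ref{prop:GK} and \ref{prop:chow}, whose proofs in turn rested on the congruence relations of Section \ref{sec:congruencerel} (in particular Propositions \ref{lemK:indXplus1} and \ref{lem:indXplus1}, together with the exact value \eqref{torsionindex} of $\ind X$). The only thing worth emphasizing in the write-up is that one uses the grading: the preimage $x'$ of $z$ must be taken in $\CH^{3}(X)$, not just in $\CH(X)$, so that the identity $\varphi(x - 2x') = 0$ takes place within a single graded component where $\varphi$ was hypothesized to be injective.
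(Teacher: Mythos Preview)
Your argument is correct and follows exactly the same approach as the paper's one-line proof, which likewise cites Propositions \ref{prop:GK} and \ref{prop:chow} together with the surjectivity of $\varphi$. One minor slip: the element $x$ lies in $\CH_{3}(X)=\CH^{\dim X-3}(X)$, not in $\CH^{3}(X)$ (and correspondingly $\varphi(x)\in GK^{\dim X-3}(X)$), but this does not affect the logic of your argument.
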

\begin{proof}
Follows from Proposition \ref{prop:GK}, Proposition \ref{prop:chow}, and the surjectivity of $\varphi$.
\end{proof}

\appendix
\section{Pieri formula in the Grothendieck ring of $\bar{X}$}

In this section, we give a proof of the congruence relations in (\ref{krelationinbar}). Using the Pieri-type formula in Lemma \ref{pieriformulaktheory}, we first compute the the products $\ekbar_{i}\ekbar_{m}$ in terms of the Schubert classes (Lemmas  \ref{ktheorysquaresbarek} and \ref{ktheoryproductsbarek}). Then, we derive the formulas for the square of $f(i)\in \widetilde K^{i}(\bar X)$ in Proposition \ref{ktheorysquaresckproducts}.

Recall that the group $K(\bar{X})$ is free abelian with basis the set of all products $\prod_{i\in [1, n]}\ekbar(i)$ (including the empty product, the unit). Recall also that a \emph{strict partition} in $[1,n]$ is a sequence $\lambda=(\lambda_{1},\ldots, \lambda_{m})$ such that $n\geq \lambda_{1}>\cdots >\lambda_{m}\geq 1$. The size of $\lambda$ is denoted by $|\lambda|=\lambda_{1}+\cdots+\lambda_{m}$. Then, the group $K(\bar{X})$ has another basis $\ekbar_{\lambda}\in K(\bar{X})^{(|\lambda|)}$, where $\lambda$ ranges over all strict partitions in $[1,n]$ including the empty partition, given by the Schubert classes. Note that if $\lambda$ consists of a single element $\{i\}$, then $\ekbar_{i}=\ekbar(i)$. We allow notation $\ekbar_\lambda$ with $\lambda$ an arbitrary finite decreasing sequence of natural numbers: if $\lambda$ contains numbers bigger than $n$, we set $\ekbar_{\lambda} =0$.

We shall first recall some basic notions from \cite{buchravikumar}. Let $\lambda$ be a finite decreasing sequence of natural numbers. The \emph{shifted diagram} of $\lambda$ is an array of boxes in which the $i$th row has $\lambda_{i}$ boxes, and is shifted $i-1$ units to the right with respect to the top row. We denote the number of rows of $\lambda$ by $l(\lambda)$. A \emph{skew shifted diagram} (or shape) $\nu /\lambda$ is obtained by removing a shifted diagram $\lambda$ from a larger shifted diagram $\nu$ containing $\lambda$. The number of boxes in $\nu /\lambda$ is denoted by $|\nu /\lambda|$. A skew shifted diagram is called \emph{connected} if all boxes share an edge. A skew shifted diagram is called a \emph{rim} if it does not contain a pair of boxes one of which is located strictly to the right (east) and strictly to the bottom (south) of the other one.

\begin{dfn}\label{dfn:KOG}\cite[Section 4]{buchravikumar}
	Let $\theta$ be a rim. A \emph{KOG-tableau} of $\theta$ is a labeling of the boxes of $\theta$ with positive
	integers such that 
	\begin{enumerate}
		\item[(i)] each row (resp. column) of $\theta$ is strictly increasing from left (resp. top) to right (resp. bottom); and
		
		\item[(ii)] each box is either smaller than or
		equal to all the boxes south-west of it, or it is greater than or equal to all the boxes 
		south-west of it. 
	\end{enumerate}
	If $\theta$ is not a rim, then there are no KOG-tableaux with shape $\theta$. The \emph{content} of a
	KOG-tableau is the set of integers contained in its boxes.	
\end{dfn}
\begin{rem}\label{koggreaterorless}
	Let $B$ be a box in a KOG-tableau of shape $\theta$. If there is a box in $\theta$ located directly to the left of $B$, then 
	$B$ is actually greater than or equal to all the boxes 
	south-west of it.
	If there is a box in $\theta$ directly below $B$, then 
	$B$ is actually less than or equal to all the boxes 
	south-west of it.
\end{rem}

\begin{example}\label{ex:KOGT}
	(1) Let us consider the following rim with two rows
	\begin{equation}\label{ex:table1}
	\begin{ytableau}
	\none & \none & \none & \none[\cdots] & a_{1} \\
	b_{1} & \cdots  & b_{r} & \none & \none
	\end{ytableau}
	\end{equation}
	such that the two rows of the rim are disconnected\footnote{ Here and further, dots outside boxes denote empty space of any nonnegative length, in particular, length $0$ is possible. In other words, 
	it is possible that in the tableau (\ref{ex:table1}), the cells $a_1$ and $b_{r}$ share a vertex (but not an edge).}, where the top row consists of only one box and the bottom row consists of $r$ boxes. Then, for any $r\geq 1$, the number of KOG-tableaux of shape (\ref{ex:table1}) with content $[1,r+1]$ is equal to $2$. This can be verified in the following way. As the number of boxes of (\ref{ex:table1}) is equal to $r+1$, $a_{1}, b_{1},\ldots, b_{r}$ are distinct numbers of $[1,r+1]$. If $a_{1}>b_{r}$, then by Definition \ref{dfn:KOG} (i) we have the unique KOG-tableau with labeling $(a_{1},b_{1},\ldots, b_{r})=(r+1,1,\ldots, r)$. Otherwise, by Definition \ref{dfn:KOG} (i), (ii) we see that $b_{r}>b_{r-1}>\cdots b_{1}>a_{1}$, thus we also have the unique KOG-tableau with labeling $(a_{1},b_{1},\ldots, b_{r})=(1,2,\ldots, r+1)$.
	
	\medskip
	
	(2) Now consider the following diagram, which is the same as the diagram above, but with one more cell added to the first row.
	\begin{equation}\label{ex:table2}
	\begin{ytableau}
	\none & \none & \none & \none[\cdots] & a_{1} & a_{2} \\
	b_{1} & \cdots  & b_{r} & \none & \none &\none
	\end{ytableau}
	\end{equation}
	Then, for any $r\geq 2$ the number of KOG-tableau of shape (\ref{ex:table2}) with content $[1,r+1]$ is equal to $3$: By Remark \ref{koggreaterorless}, we obviously get $a_{2}=r+1$. If $a_{1}\geq b_{r}$, then there is a unique KOG-tableau with labeling $(a_{1}, a_{2}, b_{1},\ldots, b_{r})=(r, r+1, 1,\ldots, r)$. Otherwise, we have $a_{1}\leq b_{1}<\cdots <b_{r}\leq r+1$, thus there are exactly two KOG-tableaux with labelings $(a_{1}, a_{2}, b_{1},\ldots, b_{r})=(1, r+1, 1,\ldots, r)$ and $(1, r+1, 2,\ldots, r+1)$.
\end{example}

We shall make use of the following combinatorial Pieri-type formula due to Buch and Ravikumar:

\begin{lemma}[{\cite[Corollary 4.8]{buchravikumar}}]\label{pieriformulaktheory}
	Let $1\leq i\leq n$ be an integer. For strict partitions $\lambda$ and $\nu$ in $[1,n]$, we denote by $C_{\lambda, i}^{\nu}$ the number of 
	KOG-tableaux of shape $\nu / \lambda$ with content $[1,i]$. Then,
	\begin{equation*}
	\ekbar_{i} \ekbar_\lambda = \sum_{\nu\, \subseteq\, [1,n]}
	(-1)^{\abs{\nu/ \lambda} - i}\cdot C_{\lambda, i}^{\nu}\cdot \ekbar_{\nu}.
	\end{equation*}
\end{lemma}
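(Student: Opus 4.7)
The plan is to deduce this formula as a direct consequence of the Pieri-type rule of Buch and Ravikumar \cite[Corollary 4.8]{buchravikumar}, with the sole task being the reconciliation of conventions between that reference and the setup of Section \ref{subsec:Grothendieck}. First I would unpack the definition of the special Schubert class $\ekbar(i) = \ekbar_i$ given in this paper: it is obtained as $(\pi_1)_* (\pi_2)^* [l_{n-i}]$, where $l_{n-i}$ is an $(n-i)$-dimensional linear subspace of the quadric $\bar Y$ and $\pi_1,\pi_2$ are the projections from the incidence variety $\cP$. Geometrically, $(\pi_1)_*(\pi_2)^*[l_{n-i}]$ is the class of the locus of maximal isotropic subspaces meeting $l_{n-i}$ non-trivially. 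I would check that this coincides with the special Schubert Grothendieck class indexed by the one-row partition $(i)$ used in Buch–Ravikumar, so that $\ekbar_i = \ekbar_{(i)}$ in our basis notation.

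Next I would verify that the Schubert basis $\{\ekbar_\lambda\}_{\lambda \subset [1,n]}$ of $K(\bar X)$, as introduced just before the lemma, matches the basis of structure sheaves of Schubert varieties in the maximal orthogonal Grassmannian (indexed by strict partitions fitting inside the staircase shape) that is used in loc.~cit. This is standard: strict partitions $\lambda$ in $[1,n]$ parametrize the Schubert cells of $\bar X$, and the two natural bases (pushforward of a point in the cell, or structure sheaf of the closure) coincide up to the conventions of Schubert calculus in $K$-theory.

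Once the conventions are matched, Buch–Ravikumar's Corollary 4.8 states exactly the claimed formula, with the KOG-tableau count $C^\nu_{\lambda,i}$ and the sign $(-1)^{|\nu/\lambda|-i}$ which encodes the ``excess'' of boxes added past the expected $i$ in the $K$-theoretic (as opposed to cohomological) Pieri rule. The hypothesis $\nu/\lambda$ being a rim (otherwise no KOG-tableaux exist, Definition \ref{dfn:KOG}) ensures finiteness of the sum, and the restriction $\nu \subseteq [1,n]$ reflects vanishing of $\ekbar_\nu$ outside this range.

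The main obstacle is the bookkeeping of conventions, not any substantial new calculation. In particular, one must check carefully that (a) Buch–Ravikumar's notation for the skew shape $\nu/\lambda$ (ordering of rows, shifting convention) agrees with the shifted diagram convention recalled here, and (b) their ``KOG-tableau'' definition, with its south-west comparison axiom, matches Definition \ref{dfn:KOG} verbatim, including the content set $[1,i]$ (as opposed to multisets or a labelled alphabet). With these compatibilities settled, the lemma follows immediately; no further combinatorics is needed.
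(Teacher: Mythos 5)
The paper does not prove this lemma: it is stated as a citation to \cite[Corollary 4.8]{buchravikumar} (the reference appears in the lemma's bracketed header), followed only by a short remark on extending the convention so that $\ekbar_\nu = 0$ when $\nu$ has entries larger than $n$, which lets the sum run over all finite decreasing sequences. Your proposal — deduce the formula directly from Buch–Ravikumar's Pieri rule after reconciling the indexing and tableau conventions — is exactly what the paper does, so the approaches coincide.
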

To be precise, in view of our convention that $\ekbar_{\nu}$ is defined and equals zero for any finite 
decreasing sequence of natural numbers $\nu$ containing numbers bigger than $n$, we will use this formula with the sum over ``strict partitions $\nu$''
replaced with the sum over ``decreasing sequences of natural numbers $\nu$''. All extra summands appearing this way are zeros, even if the coefficients 
$C_{\lambda, i}^{\nu}$ alone are not zeros.

Now we compute the coefficients (modulo terms in $K(\bar{X})^{(2i+2)}$) in the Pieri formula (Lemma \ref{pieriformulaktheory}) for $\lambda=(i)$.

\begin{lemma}\label{ktheorysquaresbarek}
We have $\ekbar_{1}^{2} = \ekbar_{2}$ and $\ekbar_{n}^{2} = 0$ in $K(\bar{X})$, and the following relations hold modulo $K (\bar{X})^{(2i+2)}:$
\begin{equation*}
	\ekbar_{i}^{2} \equiv
	\ekbar_{2i}+2(\sum_{k=1}^{i-1}\ekbar_{i+k,\, i-k})-\ekbar_{i+1,\,i}-3(\sum_{k=2}^{i-1}\ekbar_{i+k,\, i-k+1})-2\ekbar_{2i,\,1} 
\end{equation*}
for any $1<i<n$.
\end{lemma}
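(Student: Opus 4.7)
My plan is to apply the Pieri-type formula of Lemma \ref{pieriformulaktheory} with $\lambda = (i)$ and enumerate all strict partitions $\nu \supseteq \lambda$ whose contribution survives modulo $K(\bar{X})^{(2i+2)}$. Since each row and each column of a KOG-tableau is strictly increasing, no content value can repeat inside a single row or column, so a shape supporting a tableau of content $[1,i]$ must have at least $i$ boxes; combined with the modulus bound $|\nu|\le 2i+1$, this forces $|\nu/\lambda|\in\{i,i+1\}$. The Pieri sign $(-1)^{|\nu/\lambda|-i}$ is then $+1$ for $|\nu/\lambda|=i$ and $-1$ for $|\nu/\lambda|=i+1$.

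Next, I would use the rim condition to cut down the allowable $\nu$. The diagram of $\lambda=(i)$ occupies row $1$ in columns $1$ through $i$. The rim condition between row $2$ (at columns $2,\ldots,1+\nu_2$) and row $1$ (new boxes at columns $i+1,\ldots,\nu_1$) forces $\nu_2\le i$, while the analogous rim condition between row $j$ and row $j+1$ for $j\ge 2$ forces $\nu_j=0$ for $j\ge 3$. The surviving $\nu$ are therefore $(2i)$ and $(2i+1)$ (single row), $(i+k, i-k)$ for $k\in[1,i-1]$ (two rows, total size $i$), and $(i+k, i+1-k)$ for $k\in[1,i]$ (two rows, total size $i+1$).

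The tableau counts are obtained by a short case analysis on the entry $a_1$ in the leftmost new box of row $1$, using Remark \ref{koggreaterorless} which forces $a_j\ge$ every south-western entry whenever $j\ge 2$ and forces $a_1$ to be either $\le$ or $\ge$ every south-western entry. For $\nu=(2i)$ one gets the unique filling $1,2,\ldots,i$; for $\nu=(2i+1)$ there is no filling. For the disconnected two-row shapes, $\nu=(i+k,i-k)$ yields $2$ tableaux each, generalizing Example \ref{ex:KOGT}(1), and $\nu=(i+k,i+1-k)$ with $k\in[2,i-1]$ yields $3$ tableaux each, matching Example \ref{ex:KOGT}(2). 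The two boundary cases within the $|\nu/\lambda|=i+1$ family need separate attention: for $\nu=(i+1,i)$ the two rows share column $i+1$, and Remark \ref{koggreaterorless} forces a unique filling (count $1$); for $\nu=(2i,1)$ the row-$2$ box is alone, so the generic subcases ``$a_1\le b_1$'' and ``$a_1\ge b_{i+1-k}$'' collapse into one, leaving $2$ tableaux. Multiplying each count by the appropriate sign and summing produces the displayed formula modulo $K(\bar X)^{(2i+2)}$.

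For the two exact statements, $\ekbar_1^2=\ekbar_2$ holds because the only $\nu\supseteq(1)$ supporting a valid KOG-tableau of content $\{1\}$ is $\nu=(2)$, the remaining candidates failing either the rim or the column-strict condition; and $\ekbar_n^2=0$ follows because a strict partition $\nu\subset[1,n]$ with $\nu_1=n$ leaves all KOG-boxes in rows $\ge 2$, and the rim condition together with $\nu_2\le n-1$ leaves fewer than $n$ boxes available, so content $[1,n]$ cannot be realized. I expect the main technical obstacle to be the careful bookkeeping of the two boundary subcases $(i+1,i)$ and $(2i,1)$, whose anomalous counts $1$ and $2$ are precisely what produce the coefficients $-1$ and $-2$ visible in the statement; everything else reduces to the two tableau templates already exhibited in Example \ref{ex:KOGT}.
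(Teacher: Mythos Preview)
Your proposal is correct and follows essentially the same approach as the paper: apply the Pieri formula with $\lambda=(i)$, use the rim condition to restrict to $l(\nu)\le 2$, use the degree bound to restrict to $|\nu/\lambda|\in\{i,i+1\}$, and then count KOG-tableaux via the two templates in Example~\ref{ex:KOGT}, handling the boundary cases $\nu=(i+1,i)$ and $\nu=(2i,1)$ separately. The only cosmetic difference is that the paper carries out the induction on $k$ explicitly (reducing to Examples~\ref{ex:KOGT}(1) and (2) via Remark~\ref{koggreaterorless} to peel off $a_k=i$), whereas you invoke those examples directly as the generic pattern; the content is the same.
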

\begin{proof}
For now, in addition to $1 < i < n$, let us also allow $i=1$ and $i=n$. Let us use Lemma \ref{pieriformulaktheory} for this $i$ and for $\lambda = (i)$.
	First, note that if $l(\nu)\geq 3$, then the leftmost box of the third row of $\nu/ \lambda$ is strictly below and strictly to the right of the leftmost box of the second row of $\nu/ \lambda$, thus $\nu / \lambda$ is not a rim. Hence, we may assume that $\nu$ with $l(\nu)\leq 2$. 
	
	Since we consider the number of KOG-tableaux of shape $\nu / \lambda$ with content $[1,i]$, it suffices to consider $\nu$ with $\abs{\nu/ \lambda}\geq i$ (i.e.,  $\abs{\nu}\geq 2i$).
	
	If $l(\nu)=1$, then by Definition \ref{dfn:KOG} (i) $C_{\lambda, i}^{\nu}\neq 0$ if and only if $\abs{\nu / \lambda}=i$. In this case, $\nu/ \lambda$ is simply $\nu$ without the first leftmost $i$ boxes, thus $C_{\lambda, i}^{\nu}=1$, i.e., $\ekbar_{2i}$ occurs in $\ekbar_{i}^2$ with coefficient $1$.
	
	Let $i=1$. Then, again by Definition \ref{dfn:KOG} (i) $C_{\lambda, i}^{\nu}=0$ for any $\nu$ with $l(\nu)=2$, thus $\ekbar_2$ is the only summand in $\ekbar_1^2$.
	
	Let $i=n$. Since $\nu$ is a strict partition of $[1,n]$, the condition $l(\nu)=2$ implies that $|\nu|\leq 2n-1$. As $|\nu|\geq 2n$ and $\ekbar_{2n}=0$ by definition, we obtain the relation $\ekbar_{n}^{2}=0$.

	From now on, we assume that $2\leq i\leq n-1$, and we compute $\ekbar_{i}^{2}$
	modulo $K(\bar{X})^{(2i+2)}$. If $\abs{\nu/ \lambda}\geq i+2$, then $\abs{\nu}\geq 2i+2$, thus $\ekbar_{\nu}\in K(\bar{X})^{(2i+2)}$. 
	Therefore, we may assume that $\abs{\nu/ \lambda}=i$ or $i+1$.

	Let $\nu=(j,r)$ be such that $j>r$ and $j+r=2i$ or $j+r=2i+1$. By the definition of a rim, there cannot be more than one box in the top row of $\nu / \lambda$ 
	located directly above cells of the bottom row. Hence, it suffices to compute $C_{\lambda, i}^{\nu}$ for the following tableaux of shape $\nu/ \lambda$:
	\begin{equation}\label{table3}
		\begin{ytableau}
		\none  & \none    & a_{1} & \cdots &a_{k} \\
		b_{1}  & \cdots   & b_{r} 
		\end{ytableau}
		\text{\quad\quad or \quad\quad}
		\begin{ytableau}
		\none  & \none  & \none  & \none[\cdots] & a_{1} & \cdots &a_{k} \\
		b_{1}  & \cdots  & b_{r} & \none & \none & \none & \none ,
		\end{ytableau}
	\end{equation}
	where $k=j-i$, so $k=i-r$ or $k=i+1-r$ (note that the two rows of the second tableau are disconnected but they can share a vertex -- see Example \ref{ex:KOGT}).

	Assume that $k=i-r$. As $i +1 \le j \le 2i-1$, $1 \le r \le i-1$, we have $1 \le k \le i-1$. As $r<i$, two rows of the tableau (\ref{table3}) are disconnected. We show by induction that $C_{\lambda, k}^{\nu}=2$ for any $1\leq k \leq i-1$. The case $k=1$ follows from Example \ref{ex:KOGT} (1). Assume $i\geq 2$. Then, by Remark \ref{koggreaterorless} we have $a_{k}=i$, thus the statement follows by induction. Hence, for any $1\leq k\leq i-1$ the term $\ekbar_{i+k,\,i-k}$ occurs in $\ekbar_{i}^2$ with coefficient $2$.
	
	Now we assume that $k=i+1-r$. As $i +1 \le j \le 2i$, $1 \le r \le i$, we have $1 \le k \le i$. We shall consider three subcases: $k=1$, $k=i$, and $2\leq k\leq i-1$. If $k=1$, then $r=i$ and the first row of (\ref{table3}) consists of a single element $a_{1}$ just above $b_{r}$. Hence, by Remark \ref{koggreaterorless} we get $C_{\lambda, i}^{\nu}=1$ with a unique labeling $(a_{1},b_{1}, b_{2}\ldots, b_{r})=(1, 1, 2, \ldots, i)$, i.e., the term $\ekbar_{i+1,\,i}$ occurs in $\ekbar_{i}^2$ with coefficient $-1$. If $k=i$, then $r=1$ and two rows of the tableau (\ref{table3}) are disconnected. By definition \ref{dfn:KOG} (i), we see that $a_{m}=m$ for any $1\leq m\leq k$. By Remark \ref{koggreaterorless} applied to $a_{2}$, we have $b_{1}=1$ or $2$, thus $C_{\lambda, i}^{\nu}=2$, i.e., the term $\ekbar_{2i,\,1}$ occurs in $\ekbar_{i}^2$ with coefficient $-2$.

	Finally, let $2\leq k\leq i-1$. We show by induction that $C_{\lambda, i}^{\nu}=3$. The case $k=2$ immediately follows from Example \ref{ex:KOGT} (2). Assume that $k\ge 3$. Then, by Remark \ref{koggreaterorless}, $a_k = i$. If we remove the box $a_k$ from the diagram, 
	all numbers from $[1,i-1]$ must be present in the remaining boxes.
	But the content of the remaining boxes cannot be $[1,i]$ since otherwise we would have $a_{k-1} = a_{k}=i$ by Remark \ref{koggreaterorless}, 
	which contradicts Definition \ref{dfn:KOG} (i). So, we can proceed by induction on $k$ and get $C_{\lambda, i}^{\nu}=3$ for $2 \le k \le i-1$, i.e., for any $2\leq k\leq i-1$ the term $\ekbar_{i+k,\,i-k+1}$ occurs in $\ekbar_{i}^2$ with coefficient $-3$.
\end{proof}

	Recall that we have denoted $f(i) = \ekbar(i)u^{i}  = \ekbar_{i} u^{i}  \in \widetilde{K}(\bar{X})$. 
	We also simply denote $f_{m,\,i} = \ekbar_{m,\,i}\,u^{m+i}  \in \widetilde{K}(\bar{X})$. We deduce some formulas for $\widetilde{K}(\bar{X})$. The proof immediately follows from Lemma \ref{ktheorysquaresbarek}.

\begin{cor}\label{ktheorysquaresckcommas}
We have $f(1)^{2} = f(2)$ and $f(n)^{2} = 0$ in $\widetilde{K}(\bar{X})$, and the following relations hold modulo $I (\bar{X})^{2}:$
\begin{equation*}
f(i)^{2} \equiv f(2i)+2(\sum_{k=1}^{i-1} f_{i+k,\, i-k})-  t (\sum_{k=1}^{i-1}f_{i+k,\, i-k+1}) 
\end{equation*}
for any $1<i<n$.
\end{cor}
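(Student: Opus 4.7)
The plan is to reduce everything to Lemma \ref{ktheorysquaresbarek} by multiplying both sides of the identity there by $u^{2i}$ and then re-expressing the resulting monomials in the conventions of $\widetilde K(\bar X)$.

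First I would handle the boundary cases $i=1$ and $i=n$. Since Lemma \ref{ktheorysquaresbarek} gives $\ekbar_1^2 = \ekbar_2$ and $\ekbar_n^2 = 0$ exactly in $K(\bar X)$ (not just modulo something), multiplying by $u^{2}$ and $u^{2n}$ respectively yields the two equalities $f(1)^2 = f(2)$ and $f(n)^2 = 0$ in $\widetilde K(\bar X)$.

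Next, for $1 < i < n$, I would translate the congruence from Lemma \ref{ktheorysquaresbarek} term by term. Write $f(i)^2 = \ekbar_i^2 \cdot u^{2i}$, and observe that for any strict partition $\mu$ one has
\begin{equation*}
\ekbar_\mu \cdot u^{2i} = \ekbar_\mu u^{|\mu|} \cdot t^{|\mu|-2i}
\in t^{|\mu|-2i}\,\widetilde K^{|\mu|}(\bar X),
\end{equation*}
so terms arising from $K(\bar X)^{(2i+2)}$ are absorbed into $t^2 \widetilde K(\bar X) \subset I(\bar X)^2$, justifying the passage from ``modulo $K(\bar X)^{(2i+2)}$'' to ``modulo $I(\bar X)^2$''. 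For $|\mu|=2i$ (the $\ekbar_{2i}$ and $\ekbar_{i+k,i-k}$ terms) the factor $u^{2i}$ directly converts them into $f(2i)$ and $f_{i+k,i-k}$ respectively, with coefficients $1$ and $2$. For $|\mu|=2i+1$ (the $\ekbar_{i+1,i}$, $\ekbar_{i+k,i-k+1}$ for $2\le k\le i-1$, and $\ekbar_{2i,1}$ terms) one factor of $t$ is produced.

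The only (very minor) observation remaining is the simplification of the $t$-coefficients modulo $I(\bar X)^2$. Since $2t \in I(\bar X)^2$, the term $-2t\cdot f_{2i,1}$ vanishes modulo $I(\bar X)^2$, and each $-3t \cdot f_{i+k,i-k+1}$ reduces to $-t \cdot f_{i+k,i-k+1}$. Combined with the $-t\cdot f_{i+1,i}$ coming from $-\ekbar_{i+1,i}$, all these contributions collapse into the single sum $-t\sum_{k=1}^{i-1} f_{i+k,i-k+1}$, matching the stated formula. There is no real obstacle here; the only thing to be careful about is the bookkeeping between the filtration on $K(\bar X)$ and the $I(\bar X)$-adic filtration on $\widetilde K(\bar X)$, plus noticing the $2t \equiv 0$ reduction that is what allows the three different coefficients in Lemma \ref{ktheorysquaresbarek} to consolidate into one.
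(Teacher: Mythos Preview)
Your proof is correct and is exactly the computation the paper has in mind: the paper's own proof is the single sentence ``The proof immediately follows from Lemma \ref{ktheorysquaresbarek},'' and you have faithfully unpacked that implication, including the key observation that $2t\in I(\bar X)^2$ which collapses the coefficients $-1,-3,-2$ into a uniform $-t$.
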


In the following, we compute the coefficients (modulo terms in $K(\bar{X})^{(i+m+2)}$) in the Pieri formula (Lemma \ref{pieriformulaktheory}) for any $\lambda=(m)$ with $m>i$.  

\begin{lemma}\label{ktheoryproductsbarek}
	Let $m>1$. Then, we have $\ekbar_{1}\ekbar_{m} = \ekbar_{m+1} + \ekbar_{m,\,1} - \ekbar_{m+1,\,1}$ in $K(\bar{X})$,
	and 
	the following relations hold modulo $K (\bar{X})^{(i+m+2)}:$
	\begin{equation*}
		\ekbar_{i} \ekbar_{m} \equiv
		\ekbar_{m+i}+\ekbar_{m,\, i}+2(\sum_{k=1}^{i-1}\ekbar_{m+k,\, i-k})-2 \ekbar_{m+1,\,i}-3(\sum_{k=2}^{i-1}\ekbar_{m+k,\, i-k+1})-2\ekbar_{m+i,\,1} 
	\end{equation*}
	for any $1<i<m$.
\end{lemma}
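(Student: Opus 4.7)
The proof will closely mirror the argument used for Lemma \ref{ktheorysquaresbarek}. First I would apply the Pieri formula (Lemma \ref{pieriformulaktheory}) with $\lambda = (m)$ and the multiplier $\ekbar_i$, and observe that only strict partitions $\nu$ with $l(\nu) \le 2$ can contribute: if $l(\nu) \ge 3$, the leftmost cell of the third row lies strictly south-east of the leftmost cell of the second row, so $\nu/\lambda$ fails the rim condition. For the case $i=1$, I would carry out the enumeration without truncation to obtain the exact equation $\ekbar_1\ekbar_m = \ekbar_{m+1}+\ekbar_{m,1}-\ekbar_{m+1,1}$: only $\nu \in \{(m+1), (m,1), (m+1,1)\}$ can support a KOG-tableau with content $[1,1]$, and each such $\nu$ contributes a single tableau.

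For $1 < i < m$, working modulo $K(\bar X)^{(i+m+2)}$, I would restrict to $|\nu| \in \{m+i, m+i+1\}$ and analyze $\nu = (m+k, r)$ with $k \ge 0$, $r \ge 0$, $k + r \in \{i, i+1\}$. The rim condition for $\nu/\lambda$ amounts to $r \le m$, because the top row of $\nu/\lambda$ starts at column $m+1$ while the bottom row ends at column $r+1$, so a bottom-row cell is east of a top-row cell precisely when $r+1 > m+1$. Since $m > i$ and the total skew size is at most $i+1 < m+1$, the ``touching'' case $r = m$ never occurs: it would force $k \le i+1-m \le 0$, contradicting $k \ge 1$. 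Hence every non-trivial skew shape in our truncation is a disjoint union of a top row and a bottom row with no shared edge or vertex.

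Once we are in this disconnected regime, the counts of KOG-tableaux reduce verbatim to the ones computed in the proof of Lemma \ref{ktheorysquaresbarek}. Specifically, for $|\nu/\lambda| = i$ with $k \ge 1$ and $r = i - k$, Example \ref{ex:KOGT}(1) and the same induction on $k$ give $C_{\lambda,i}^{\nu} = 2$, producing the summand $2\sum_{k=1}^{i-1}\ekbar_{m+k,i-k}$. For $|\nu/\lambda| = i+1$ with $k = 1$ (so $\nu=(m+1,i)$) one sees directly that $a_1 \in \{1,i\}$ while the bottom row is forced to be $1,2,\dots,i$, yielding $C = 2$ and coefficient $-2$; the symmetric case $k = i$ (so $\nu = (m+i,1)$) is handled exactly as in Lemma \ref{ktheorysquaresbarek}, also giving $C = 2$ and coefficient $-2$; and for $2 \le k \le i-1$, Example \ref{ex:KOGT}(2) together with the same decreasing induction on $k$ (using Remark \ref{koggreaterorless} to force $a_k = i$ and the content of the remaining tableau to be $[1,i-1]$) yields $C = 3$, contributing $-3\sum_{k=2}^{i-1}\ekbar_{m+k,i-k+1}$. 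Finally, the ``one-row'' case $\nu = (m+i)$ and the ``flat bottom-row'' case $\nu = (m,i)$ each contribute $+1$, giving the leading terms $\ekbar_{m+i} + \ekbar_{m,i}$.

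The main obstacle is purely bookkeeping: correctly tracking for each admissible $\nu$ whether the KOG-tableau count is $1$, $2$, or $3$, and assembling the signs $(-1)^{|\nu/\lambda|-i}$ to arrive at the stated congruence. I would expect no essential combinatorial input beyond what was already established in Lemma \ref{ktheorysquaresbarek} and the two model counts of Example \ref{ex:KOGT}, since the hypothesis $m > i$ conveniently excludes the boundary ``edge-sharing'' configurations that would otherwise require separate analysis.
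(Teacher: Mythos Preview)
Your proposal is correct and follows essentially the same approach as the paper's proof, which likewise applies the Pieri formula with $\lambda=(m)$, restricts to $l(\nu)\le 2$ and $|\nu/\lambda|\in\{i,i+1\}$, rules out the edge-sharing configuration, and reduces every remaining tableau count to the model computations in Example~\ref{ex:KOGT} together with the induction from Lemma~\ref{ktheorysquaresbarek}. One small inaccuracy: the two rows of $\nu/\lambda$ can share a vertex (this happens for $\nu=(m+1,i)$ when $i=m-1$), so your claim ``no shared edge or vertex'' is slightly too strong---but this does not affect your counts, since Example~\ref{ex:KOGT} and your direct analysis of the $k=1$ case already cover the vertex-sharing situation.
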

\begin{proof}
The proof is similar to the proof of Lemma \ref{ktheorysquaresbarek} above. For now, in addition to $1 < i < m$, let us also allow $i=1$. Let us use Lemma \ref{pieriformulaktheory} for this $i$ and for $\lambda = (m)$.
Then, the arguments of the first two paragraphs of the proof of Lemma \ref{ktheorysquaresbarek} show that $l(\nu) \le 2$ and $\abs{\nu / \lambda} \ge i$.

If $l(\nu)=1$, then it follows from Definition \ref{dfn:KOG} (i) that $C_{\lambda, i}^{\nu}=1$ if $\abs{\nu / \lambda} = i$, 
		and $C_{\lambda, i}^{\nu}=0$ 
		otherwise.
		So, $\ekbar_{i+m}$ occurs in $\ekbar_{i} \ekbar_m$ with coefficient $1$, and there are no terms $\ekbar_k$ with $k \ne i+m$
		in the decomposition of $\ekbar_{i} \ekbar_m$
		from Lemma \ref{pieriformulaktheory}.

From now on, let $l(\nu)=2$. Let us consider the case $i = 1$ first. Then the content of the KOG-tableau is simply $\{1\}$. By Definition \ref{dfn:KOG} (i), each row of $\nu / \lambda$ can have at most 1 box. 
		There are only two partitions $\nu$ that contain $\lambda$ and satisfy these conditions: $\nu = (m,1)$ and $\nu = (m+1,1)$.
		So, $C_{\lambda, 1}^{(m,1)}=C_{\lambda, 1}^{(m+1,1)} = 1$, thus the first equation in the statement of the lemma immediately follows.
		
Let $2\leq i \leq m-1$. If $\abs{\nu/ \lambda}\geq i+2$, then $\abs{\nu}\geq i+m+2$, thus $\ekbar_{\nu}\in K(\bar{X})^{(i+m+2)}$. Therefore, we may assume that $\abs{\nu/ \lambda}=i$ or $i+1$.

Let	$\nu=(j,r)$, where $j>r$ and $j+r=m+i$ or $j+r=m+i+1$. If there is a box in the top row of $\nu / \lambda$ located directly above boxes of the bottom row, then $r=m\geq i+1$, thus $\abs{\nu/ \lambda}\geq i+2$. Hence, it suffices to compute $C_{\lambda, i}^{\nu}$ for the following tableaux of shape $\nu / \lambda$:
		\begin{equation}\label{table4}
		\begin{ytableau}
		\none   & \none  & \none  \\
		b_{1}   & \cdots  & b_{r} 
		\end{ytableau}
		\text{\quad\quad or \quad\quad}
		\begin{ytableau}
		\none  & \none  & \none  & \none[\cdots] & a_{1} & \cdots &a_{k} \\
		b_{1}  & \cdots  & b_{r} & \none & \none & \none & \none ,
		\end{ytableau}
		\end{equation}
		where $k=j-m$, 
		so $k=i-r$ or $k=i+1-r$.
		
	Assume that $k = i - r$. As $m \le j \le i+m-1$, $1 \le r \le i$, we get $0 \le k \le i-1$. We have two subcases: $k=0$ and $1 \le k < i$. If $k = 0$, then by Definition \ref{dfn:KOG} (i) $C_{\lambda, i}^{\nu}=1$. If $1 \le k < i$, then we use induction on $k$: For $k=1$, we get $C_{\lambda, i}^{\nu}=2$ by Example \ref{ex:KOGT} (1). For $k \ge 2$, we have $a_k = i$ by Remark \ref{koggreaterorless}, thus $C_{\lambda, i}^{\nu}=2$ by induction.
		
	Now assume that $k = i+ 1-r$. As the content of $\nu / \lambda$ should be $[1,i]$, it follows from Definition \ref{dfn:KOG} (i) that $r \le i$, and the top row in the tableau (\ref{table4}) is non-empty. So, $1 \le r \le i$, $m+1 \le j \le i+m$, and $1 \le k \le i$. We consider three subcases: $k=1$, $k = i$, and $2 \le k \le i-1$.
		
		If $k=1$, then $r=i$. By Definition \ref{dfn:KOG} (i), there is only one option for the bottom row: $(b_{1},\ldots, b_{i})=(1,\ldots, i)$. By Definition \ref{dfn:KOG} (ii), we have two options for $a_1$: $a_1 = 1$ or $a_1=i$. Hence, we get $C_{\lambda, i}^{\nu}=2$ , i.e., the term $\ekbar_{m+1,\,i}$ occurs in $\ekbar_{i} \ekbar_{m}$ with coefficient $-2$.

		If $k=i$, then $r=1$. By Definition \ref{dfn:KOG} (i), we get $(a_1, \ldots, a_i) = (1, \ldots, i)$. By Definition \ref{dfn:KOG} (ii) applied to $a_2$, we have two options for $b_1$: $b_1=1$ or $b_1=2$. So, $C_{\lambda, i}^{\nu}=2$, and the term 
		$\ekbar_{m+i, \, 1}$ occurs in $\ekbar_{i} \ekbar_{m}$ with coefficient $-2$.
		
		Finally, let $2 \le k \le i-1$. Then, by exactly the same argument as in the last paragraph of the proof of Lemma \ref{ktheorysquaresbarek}, we have $C_{\lambda, i}^{\nu}=3$, thus the term $\ekbar_{m+k,\,i-k+1}$ occurs in $\ekbar_{i} \ekbar_{m}$ with coefficient $-3$ for any $2\leq k\leq i-1$.
		\end{proof}

Lemma \ref{ktheoryproductsbarek} directly implies the following equations in $\widetilde{K}(\bar{X})$.

\begin{cor}\label{ktheoryproductsckcommas}
	Let $m > 1$. Then, we have $f(m)f(1) = f(m+1) + f_{m,\,1} - tf_{m+1,\,1}$ in $\widetilde{K}(\bar{X})$,
	and the following relations hold modulo $I(\bar{X})^{2}:$
\begin{equation*}
		f(m) f(i) \equiv f(m+i)+f_{m,\, i}+2\big(\sum_{k=1}^{i-1}f_{m+k,\, i-k}\big)+t\big(\sum_{k=2}^{i-1}f_{m+k,\, i-k+1}\big)
\end{equation*}
for any $1<i<m$.
\end{cor}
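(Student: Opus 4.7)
The statement to prove is simply a transcription of Lemma \ref{ktheoryproductsbarek} (which lives in $K(\bar X)$) into the extended Rees ring $\widetilde{K}(\bar X)$. The plan is therefore short: multiply both sides of each identity in Lemma \ref{ktheoryproductsbarek} by $u^{m+i}$, convert each Schubert-class product $\ekbar_\nu$ into its $\widetilde{K}$-theoretic counterpart $f_\nu$, and then simplify the resulting congruence modulo $I(\bar X)^2$ using the fact that $I(\bar X)=(2,t)$ and hence $4$, $2t$, $t^2$ all lie in $I(\bar X)^2$.

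For the first identity ($i=1$), no reduction is required: multiplying $\ekbar_m\ekbar_1=\ekbar_{m+1}+\ekbar_{m,1}-\ekbar_{m+1,1}$ by $u^{m+1}$, one gets $f(m)f(1)$ on the left; on the right, $\ekbar_{m+1}u^{m+1}=f(m+1)$ and $\ekbar_{m,1}u^{m+1}=f_{m,1}$ by definition, while the Schubert class $\ekbar_{m+1,1}$ has total size $m+2$, so $\ekbar_{m+1,1}u^{m+1}=t\cdot\ekbar_{m+1,1}u^{m+2}=tf_{m+1,1}$. This reproduces the stated equation exactly.

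For $1<i<m$, the Lemma \ref{ktheoryproductsbarek} congruence holds modulo $K(\bar X)^{(m+i+2)}$. Multiplying by $u^{m+i}$ moves the error term into $K(\bar X)^{(m+i+2)}u^{m+i}=t^2\widetilde{K}^{m+i+2}(\bar X)\subset I(\bar X)^2$, so the congruence becomes one modulo $I(\bar X)^2$. Schubert classes $\ekbar_{m+k,i-k}$ in the right-hand side have total size $m+i$ and translate directly to $f_{m+k,i-k}$ (with no extra factor of $t$), while classes $\ekbar_{m+k,i-k+1}$ (including the boundary cases $k=1$ and $k=i$) have total size $m+i+1$ and therefore contribute $tf_{m+k,i-k+1}$. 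The literal translation reads
\begin{equation*}
f(m)f(i)\equiv f(m+i)+f_{m,i}+2\!\!\sum_{k=1}^{i-1}\!f_{m+k,i-k}-2tf_{m+1,i}-3t\!\!\sum_{k=2}^{i-1}\!f_{m+k,i-k+1}-2tf_{m+i,1}\pmod{I(\bar X)^2}.
\end{equation*}

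The final step is the simplification modulo $I(\bar X)^2$: since $2t\in I(\bar X)^2$, the two terms $-2tf_{m+1,i}$ and $-2tf_{m+i,1}$ drop out, and since $-3t=t-4t=t-2\cdot 2t$, one has $-3t\equiv t\pmod{I(\bar X)^2}$, converting $-3t\sum f_{m+k,i-k+1}$ into $t\sum f_{m+k,i-k+1}$. This yields the stated formula. There is no real obstacle here; the only care required is correct bookkeeping of $u$-exponents (so that each $\ekbar_\nu$ gets multiplied by precisely the right power of $t$) and noticing that the cleaner form of the congruence modulo $I(\bar X)^2$ arises because the ideal $(2,t)^2$ absorbs exactly the $-2t$ and $-4t$ contributions.
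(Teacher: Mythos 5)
Your proposal is correct and follows exactly the approach the paper intends (the paper omits the proof, stating only that the corollary ``directly implies'' from Lemma \ref{ktheoryproductsbarek}). The bookkeeping of $u$-powers is right: Schubert classes $\ekbar_\nu$ with $|\nu|=m+i$ give $f_\nu$, those with $|\nu|=m+i+1$ give $tf_\nu$, the error term lands in $t^2\widetilde K^{m+i+2}(\bar X)\subset I(\bar X)^2$, and the reductions $-2t\equiv 0$, $-3t\equiv t\pmod{I(\bar X)^2}$ produce the stated formula.
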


Now using Corollary \ref{ktheoryproductsckcommas}, we get the following intermediate result.

\begin{lemma}\label{ktheorycommasckproducts}
	For any $m>1$, we have the following relation modulo $I (\bar{X})^{2}$ 
	\begin{equation*}
		 f_{m, \, 1}-f(m)f(1) \equiv f(m+1) - t f(1)f(m+1) + t f(m+2).
	\end{equation*}
	For any $1<i<m$, the difference $f_{m, \, i}-f(m)f(i)$ is congruent modulo $I (\bar{X})^{2}$ to
	\begin{equation*}
		(-1)^{i} f(m+i)-2\big(\sum_{k=1}^{i-1}f(m+k)f(i-k)\big)-t\big(\sum_{k=2}^{i-1}f(m+k)f(i-k+1)\big)
	\end{equation*}	
	if $i$ is even, and is congruent modulo $I (\bar{X})^{2}$ to
	\begin{equation*}
		(-1)^{i} f(m+i)-2\big(\sum_{k=1}^{i-1}f(m+k)f(i-k)\big)-t\big(\sum_{k=2}^{i-1}f(m+k)f(i-k+1)\big) - t f(m+i+1)
	\end{equation*}	
	if $i$ is odd.
\end{lemma}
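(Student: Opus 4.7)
The plan is to derive both statements from Corollary \ref{ktheoryproductsckcommas} by solving for $f_{m,i} - f(m)f(i)$ and then recursively eliminating the remaining two-part classes $f_{m+k,\,\ast}$ in favor of products $f(m+k)f(\ast)$. The central reduction principle is that $4$, $2t$, and $t^2$ all lie in $I(\bar{X})^2$, so that after multiplication by $2$ or by $t$, most of the nested terms produced by a second application of the corollary will vanish modulo $I(\bar{X})^2$.

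For the warm-up case $i=1$, I would use the exact identity $f(m)f(1) = f(m+1) + f_{m,1} - tf_{m+1,1}$ to obtain $f_{m,1} - f(m)f(1) = -f(m+1) + tf_{m+1,1}$, and then apply the same identity with $m$ replaced by $m+1$ to rewrite $tf_{m+1,1} = tf(1)f(m+1) - tf(m+2) + t^2 f_{m+2,1}$. The residual $t^2 f_{m+2,1}$ lies in $I(\bar{X})^2$ and is discarded, yielding the first claimed congruence modulo $I(\bar{X})^2$ (up to terms like $2f(m+1)$ that must be matched by an appropriate parity reduction, as below).

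For $1 < i < m$, the rearrangement of Corollary \ref{ktheoryproductsckcommas} gives $f_{m,i} - f(m)f(i)$ as $-f(m+i)$ minus the sums $2\sum_{k=1}^{i-1} f_{m+k,\,i-k}$ and $\pm t\sum_{k=2}^{i-1} f_{m+k,\,i-k+1}$. For each $k$, I would expand these two-part classes using the corollary again (its first formula when the second index equals $1$, its second formula otherwise; the strict inequality condition holds since $m+2k > i$ is automatic from $k \geq 1$ and $i < m$). Multiplying the expansion by the outer factor $2$ (respectively $t$) sends every nested ``higher-order'' summand into $\{4,\, 2t,\, t^2\} \cdot \widetilde K(\bar X) \subset I(\bar{X})^2$, so only two pieces survive from each expansion: the desired product $f(m+k)f(\text{second index})$, together with one copy of $f(m+i)$ (respectively $f(m+i+1)$) from the diagonal first term $f((m+k)+(i-k))$ (respectively $f((m+k)+(i-k+1))$).

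After collection, $f(m+i)$ carries the total coefficient $2(i-1) - 1 = 2i - 3$, while $f(m+i+1)$ carries coefficient $\pm t(i-2)$. A final parity reduction modulo $I(\bar{X})^2$ then closes the proof: since $4 \in I(\bar{X})^2$, one checks that $2i - 3 \equiv (-1)^i \pmod 4$, so the coefficient of $f(m+i)$ becomes $(-1)^i$; since $2t \in I(\bar{X})^2$, the quantity $t(i-2)$ lies in $I(\bar{X})^2$ when $i$ is even (whence no $f(m+i+1)$ term survives) and reduces to $\pm t$ when $i$ is odd (giving the extra $\pm tf(m+i+1)$ summand). The main obstacle is purely bookkeeping: tracking all the nested summands produced by the double application of Corollary \ref{ktheoryproductsckcommas} and certifying carefully that the unwanted nested contributions really are absorbed into $I(\bar{X})^2$ after the outer multiplication by $2$ or $t$.
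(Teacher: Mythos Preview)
Your approach is the paper's: rearrange Corollary~\ref{ktheoryproductsckcommas} to isolate $f_{m,i}-f(m)f(i)$, then eliminate each residual $f_{m+k,\ast}$ by a second application and discard the nested terms using $4,2t,t^2\in I(\bar X)^2$. The paper phrases the second pass as an induction on $i$ (invoking the already-proved lemma for smaller indices, then multiplying by $2$ or $t$), but this is equivalent to your direct reapplication of the corollary, since the lemma and the corollary agree modulo $I(\bar X)$ and the outer factor $2$ or $t$ already supplies one power of $I(\bar X)$; your coefficients $2i-3$ and $t(i-2)$ match the paper's $-(2i-1)$ and $-t(i-2)$ modulo $4$ and $2t$ respectively.

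One caution on the $i=1$ case: your derivation correctly yields $-f(m+1)+tf(1)f(m+1)-tf(m+2)$, and the discrepancy with the displayed formula cannot be fixed by ``parity reduction'' on the leading term, because $2f(m+1)\notin I(\bar X)^2$ in general. The sign on $f(m+1)$ in the stated $i=1$ formula is simply a slip; it is harmless downstream since that formula is only used after multiplication by $2$ or $t$, where $2\equiv-2$ and $t\equiv-t$ modulo $I(\bar X)^2$.
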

\begin{proof}
	We first observe that $2\equiv -2,\, t\equiv -t \mod I (\bar{X})^{2}$. The formula for $f_{m, \, 1}-f(m)f(1)$ is obtained from 
	the formulas for $f(m)f(1)$ and for $f(m+1) f(1)$ (multiplied by $t$) in Corollary \ref{ktheoryproductsckcommas}.
	
	Let us prove the formula for $f_{m, \, i}-f(m)f(i)$ with $1<i<m$.
	We show by induction on $i$ for all values of $m>i$ together. If $i=2$, then the formula for $f_{m,\,2}-f(m)f(2)$ is obtained from the formulas for $f(m)f(2)$ and for $f(m+1)f(1)$ (multiplied by $2$) in Corollary \ref{ktheoryproductsckcommas}. Now we assume that the formulas for $f_{m', \, i'}-f(m')f(i')$ hold for any $2<i'<i$ and any $m'>i'$. Let us multiply the formulas by $2$ and $t$, respectively. Then, we have the following congruences modulo $I(\bar{X})^2$:
	\begin{equation}\label{ktheoryfmk}
	2f_{m', \, i'}-2f(m')f(i')\equiv 2f(m'+i') \text{ and } tf_{m', \, i'}-tf(m')f(i')\equiv tf(m'+i'),
	\end{equation}
	respectively. Note that by the first formula in Lemma \ref{ktheorycommasckproducts}, the first formula in (\ref{ktheoryfmk}) still holds for $i'=1$ and any $m'>1$.

Taking the sum of the first formulas (\ref{ktheoryfmk}) for $m'=m+k$, $i'=i-k$, and $1 \le k \le i-1$, we get
\begin{equation}
\label{ktheoryfmkintermediateformula1}
2\big(\sum_{k=1}^{i-1}f_{m+k,\,i-k}\big) \equiv 2\big(\sum_{k=1}^{i-1}f(m+k)f(i-k)\big) + 2(i-1) f(m+i) \mod I(\bar{X})^2.
\end{equation}
Similarly, taking the sum of the second formulas (\ref{ktheoryfmk}) for $m'=m+k$, $i'=i-k+1$, and $2 \le k \le i-1$, we get
\begin{equation}
\label{ktheoryfmkintermediateformula2}
t(\sum_{k=2}^{i-1}f_{m+k,\,i-k+1}) \equiv t(\sum_{k=2}^{i-1}f(m+k)f(i-k)) + t(i-2) f(m+i+1) \mod I(\bar{X})^2.
\end{equation}

Let us plug the formulas (\ref{ktheoryfmkintermediateformula1}) and (\ref{ktheoryfmkintermediateformula2}) into the second formula in Corollary \ref{ktheoryproductsckcommas}. Then, the difference $f_{m, \, i}-f(m)f(i)$ is congruent modulo $I(\bar{X})^2$ to
\begin{multline*}
-(2i-1)f(m+i) \\
-t(i-2)f(m+i+1)-2\big(\sum_{k=1}^{i-1}f(m+k)f(i-k)\big)-t\big(\sum_{k=2}^{i-1}f(m+k)f(i-k+1)\big).
\end{multline*}	
Since the following congruences hold modulo $I(\bar{X})^2$
	\begin{equation}\label{ktheoryparity1}
-(2i-1)
\equiv (-1)^{i}\quad \text{ and }\quad	
-
t(i-2) \equiv
	\begin{cases}
	0 & \text{ if } i \text{ is even},\\
	t & \text{ if } i\text{ is odd},
	\end{cases}
	\end{equation}
the formula follows.\end{proof}

Combining Corollary \ref{ktheorysquaresckcommas} and Lemma \ref{ktheorycommasckproducts}, we obtain the following main result of this section. 

\begin{prop}\label{ktheorysquaresckproducts}
For any $1<i<n$, the following relations hold modulo $I (\bar{X})^{2}:$
	\begin{equation*}
		f(i)^{2} \equiv
		(-1)^{i-1} f(2i)+2\big(\sum_{k=1}^{i-1} f(i+k)f(i-k)\big)-  t \big(\sum_{k=1}^{i-1}f(i+k)f(i-k+1)\big)  +t f(2i+1)
	\end{equation*}	
	for even $i$, and
	\begin{equation*}
		f(i)^{2} \equiv
		(-1)^{i-1} f(2i)+2\big(\sum_{k=1}^{i-1} f(i+k)f(i-k)\big)-  t \big(\sum_{k=1}^{i-1}f(i+k)f(i-k+1)\big)
			\end{equation*}	
	for odd $i$.
\end{prop}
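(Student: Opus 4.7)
The plan is to apply Corollary \ref{ktheorysquaresckcommas}, which gives
$f(i)^{2} \equiv f(2i) + 2\sum_{k=1}^{i-1} f_{i+k,\, i-k} - t\sum_{k=1}^{i-1} f_{i+k,\, i-k+1} \pmod{I(\bar X)^{2}}$, and then substitute each $f_{m,j}$ using Lemma \ref{ktheorycommasckproducts}, keeping careful track of which correction terms survive modulo $I(\bar X)^{2}$.

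For the sum $2\sum_{k=1}^{i-1} f_{i+k,\, i-k}$, I would first observe that multiplying the identities in Lemma \ref{ktheorycommasckproducts} by $2$ kills every correction containing a factor $2$ or $t$, leaving the uniform congruence $2f_{m,j} \equiv 2f(m)f(j) + 2(-1)^{j} f(m+j) \pmod{I(\bar X)^{2}}$ for all $j \ge 1$. (The case $j=1$ from the first formula of the lemma agrees with this after absorbing the discrepancy $4f(m+1) \in I(\bar X)^{2}$.) Summing over $k \in [1,i-1]$ contributes the desired $2\sum f(i+k)f(i-k)$ together with a multiple of $f(2i)$ equal to $2\sum_{k=1}^{i-1}(-1)^{i-k}$. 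A direct parity computation shows this alternating sum equals $-1$ when $i$ is even and $0$ when $i$ is odd.

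For the sum $-t\sum_{k=1}^{i-1} f_{i+k,\, i-k+1}$, the index $j=i-k+1$ satisfies $j \ge 2$ throughout, so the second and third formulas of Lemma \ref{ktheorycommasckproducts} apply. Multiplying by $-t$ annihilates every term carrying an extra factor of $2$ or $t$, yielding $-tf_{m,j} \equiv -tf(m)f(j) + (-1)^{j+1} tf(m+j) \pmod{I(\bar X)^{2}}$ (the distinction between even and odd $j$ collapses because the extra $-tf(m+j+1)$ in the odd case is killed by the outer $-t$). Summation then contributes $-t\sum f(i+k)f(i-k+1)$ plus the coefficient $tf(2i+1)\sum_{k=1}^{i-1}(-1)^{i-k}$, which by the same parity calculation produces $-tf(2i+1) \equiv tf(2i+1) \pmod{I(\bar X)^{2}}$ for even $i$ and vanishes for odd $i$.

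Assembling these contributions, the coefficient of $f(2i)$ is $1 + (-2) = -1$ for even $i$ (using $3 \equiv -1 \pmod{4}$ if one works with the non-unified $j=1$ substitution) and $1+0=1$ for odd $i$, in either case matching $(-1)^{i-1}$; the coefficient of $tf(2i+1)$ reproduces the parity-dependent extra term in the statement. The only real subtlety is bookkeeping: distinguishing the parity of $i$ in the alternating sum, and repeatedly using $2t, 4, t^{2} \in I(\bar X)^{2}$ (in particular $-t \equiv t$ and $3 \equiv -1 \pmod{I(\bar X)^{2}}$). Once these reductions are done, the final identity falls out immediately.
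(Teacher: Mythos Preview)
Your proposal is correct and follows essentially the same route as the paper: start from Corollary \ref{ktheorysquaresckcommas}, replace each $f_{m,j}$ via Lemma \ref{ktheorycommasckproducts}, and collect the surviving terms modulo $I(\bar X)^{2}$. The only cosmetic difference is that you retain the sign $(-1)^{j}$ in the reduced formula $2f_{m,j}\equiv 2f(m)f(j)+2(-1)^{j}f(m+j)$ and then evaluate the alternating sum $\sum_{k=1}^{i-1}(-1)^{i-k}$, whereas the paper uses $2(-1)^{j}\equiv 2$ and $t(-1)^{j}\equiv t$ modulo $I(\bar X)^{2}$ right away, obtaining the non-alternating factor $(i-1)$ and only then reducing $2(i-1)$ and $t(i-1)$ by parity; the two bookkeepings are equivalent.
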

\begin{proof}
Let us rewrite the formulas from Lemma \ref{ktheorycommasckproducts} as follows:
\begin{equation*}
f_{m', \, i'} \equiv f(m')f(i') + f(m'+i') - t f(i')f(m'+i') + t f(m'+i'+1)
\end{equation*}
for $m' > 1, i'=1$, 
\begin{multline*}
		f_{m', \, i'} \equiv f(m')f(i') + (-1)^{i'} f(m'+i')\\
		-2\big(\sum_{k=1}^{i'-1}f(m'+k)f(i'-k)\big)-t\big(\sum_{k=2}^{i'-1}f(m'+k)f(i'-k+1)\big)
	\end{multline*}
for $m' > i' > 1$, $i'$ even, and
	\begin{multline*}
		f_{m', \, i'} \equiv f(m')f(i') + (-1)^{i'} f(m'+i')\\
		-2\big(\sum_{k=1}^{i'-1}f(m'+k)f(i'-k)\big)-t\big(\sum_{k=2}^{i'-1}f(m'+k)f(i'-k+1)\big) - t f(m'+i'+1)
	\end{multline*}	
for $m' > i' > 1$, $i'$ odd, where all congruences are modulo $I(\bar X)^{2}$.

Multiplying each of these formulas by 2, for any $i'\ge 1$ and any $m' > i' \ge 1$ we have 
\begin{equation}
\label{ktheorycommasckproductstimes2}
2 f_{m', \, i'} \equiv 2 f(m')f(i') + 2 f(m'+i') \mod I(\bar{X})^2.
\end{equation}
Similarly, multiplying by $t$, for any $m' > i' \ge 1$ we get
\begin{equation}
\label{ktheorycommasckproductstimest}
t f_{m', \, i'} \equiv t f(m')f(i') + t f(m'+i') \mod I(\bar{X})^2.
\end{equation}

For any $1<i<n$, let us take the sum of (\ref{ktheorycommasckproductstimes2}) for $m'=i+k$ and $i'=i-k$ over $1\leq k\leq i-1$. Then, we get
		\begin{equation*}
		\label{ktheoryfksquareintermediateformula1}
		2\big(\sum_{k=1}^{i-1}f_{i+k,\,i-k}\big) \equiv 2\big(\sum_{k=1}^{i-1}f(i+k)f(i-k)\big) + 2(i-1) f(2i) \mod I(\bar{X})^2.
		\end{equation*}
Similarly, we take the sum of (\ref{ktheorycommasckproductstimest}) for $m'=i+k$ and $i'=i-k+1$ over $1\leq k\leq i-1$. Then, we have
		\begin{equation*}
		t\big(\sum_{k=1}^{i-1}f_{i+k,\,i-k+1}\big) \equiv t\big(\sum_{k=1}^{i-1}f(i+k)f(i-k+1)\big) + t(i-1) f(2i+1) \mod I(\bar{X})^2.
		\end{equation*}

Now let us plug these formulas into the statement of Corollary \ref{ktheorysquaresckcommas} for $1 < i < n$, thus we have the following congruence modulo $I(\bar{X})^{2}$ 
\begin{equation*}
		f(i)^{2} \equiv
		2\big(\sum_{k=1}^{i-1} f(i+k)f(i-k)\big)-  t \big(\sum_{k=1}^{i-1}f(i+k)f(i-k+1)\big) + (2i-1)f(2i) -  t(i-1) f(2i+1).
\end{equation*}
Since $-t(i-1)\equiv t \mod I(\bar{X})^{2}$ if $n$ is even, and $-t(i-1)\equiv  0\mod I(\bar{X})^{2}$ otherwise, the statement follows from the first congruence equation in (\ref{ktheoryparity1}).
\end{proof}


\begin{thebibliography}{10}

\bibitem{BK}
S.~Baek and N.~A.~Karpenko, \emph{Yagita's counter-examples and beyond}, to appear in Ark. Mat.
	
\bibitem{Borel}
A.~Borel, \emph{La cohomologie mod {$2$} de certains espaces homog\`enes}, Comment. Math. Helv. \textbf{27} (1953) 165--197.

\bibitem{buchravikumar}
A.~S.~Buch and V.~Ravikumar, \emph{Pieri rules for the {$K$}-theory of cominuscule {G}rassmannians}, J. Reine Angew. Math. \textbf{668} (2012) 109--132.



\bibitem{EKM}
R.~Elman, N.~Karpenko and A.~Merkurjev, \emph{The Algebraic and Geometric Theory of Quadratic Forms}, American Mathematical Society Colloquium Publications, Vol. 56 (American Mathematical Society, Providence, RI, 2008).


\bibitem{Kar2017a}
N.~A.~Karpenko, \emph{Chow ring of generic flag varieties}, Math. Nachr. \textbf{290}(16) (2017) 2641--2647.

\bibitem{Kar2017}
N.~A.~Karpenko, \emph{Chow ring of generically twisted varieties of complete flags}, Adv. Math. \textbf{306} (2017) 789--806.


\bibitem{Kar2018a}
N.~A.~Karpenko, \emph{On generic quadratic forms}, Pacific J. Math. \textbf{297}(2) (2018) 367--380.

\bibitem{Kar2018}
N.~A.~Karpenko, \emph{On generic flag varieties of {${\rm Spin}(11)$} and {${\rm
			Spin}(12)$}}, Manuscripta Math. \textbf{157} (2018) 13--21.

\bibitem{Kar2020}
N.~A.~Karpenko, \emph{A counter-example by {Y}agita}, Internat. J. Math. \textbf{31}(3) (2020) 2050025.

\bibitem{MT}
A.~Merkurjev and J.-P.~Tignol, \emph{The multipliers of similitudes and the
Brauer group of homogeneous varieties}, J. Reine Angew. Math. \textbf{461}
(1995), 13--47.


\bibitem{Panin}
I.~A.~Panin, \emph{On the algebraic {$K$}-theory of twisted flag varieties}, $K$-Theory \textbf{8}(6) (1994)
541--585.

\bibitem{Primozic}
E.~Primozic, \emph{Motivic Steenrod operations in characteristic p}, Forum Math. Sigma \textbf{8} (2020), Paper
No. e52, 25.

\bibitem{Totaro}
B.~Totaro, \emph{The torsion index of the spin groups}, Duke Math. J. \textbf{129}(2) (2005) 249--290.	
	
\bibitem{Yagita}
N.~Yagita, \emph{The gamma filtrations for the spin groups}, Kodai Math. J. \textbf{44}(1) (2021) 137--165.
		
\end{thebibliography}
\end{document}